\newtheorem{algorithm}{Algorithm}
\newcommand{\HH}{\mathcal{H}}
\newcommand{\R}{\mathbb{R}}
\newcommand{\BR}{ {\overline{\mathbb{R}} }  }
\newcommand{\inner}[2]{\langle{#1},{#2}\rangle}
\newcommand{\norm}[1]{\|#1\|}
\newcommand{\tos}{\rightrightarrows} 
\newcommand{\bi}{\begin{itemize}}
\newcommand{\ei}{\end{itemize}}
\newcommand{\ba}{\begin{array}}
\newcommand{\ea}{\end{array}}
\newcommand{\mgap}{\vspace{.1in}}
\newtheorem{theorem}{Theorem}[section]
\newtheorem{lemma}[theorem]{Lemma}
\newtheorem{proposition}[theorem]{Proposition}
\newtheorem{remark}[theorem]{Remark}
\renewcommand*{\@biblabel}[1]{\hfill#1.}
\begin{document}

\title{ An Inexact Spingarn's Partial Inverse Method with Applications to Operator Splitting
and Composite Optimization}

\author{
    S. Costa Lima
    \thanks{ Departamento de Matem\'atica, Universidade Federal de Santa Catarina, Florian\'opolis, Brazil, 88040-900. Tel.: +55 48 37216560. (E-mail: {\tt samaraclim@outlook.com}). 
    }
     \and
   M. Marques Alves 
    \thanks{ Departamento de Matem\'atica, Universidade Federal de Santa Catarina, Florian\'opolis, Brazil, 88040-900. Tel.: +55 48 37213678  Fax:  +55 48 37216560. (E-mail:  {\tt maicon.alves@ufsc.br}). }  
}
\date{}
\maketitle
\begin{abstract}
   We propose and study the iteration-complexity of an inexact version of the Spingarn's partial inverse
method. Its complexity analysis is performed by viewing it
in the framework of the hybrid proximal extragradient (HPE) method, for which pointwise and ergodic iteration-complexity
has been established recently by Monteiro and Svaiter. As applications, we propose and 
analyze the iteration-complexity of an inexact operator splitting algorithm -- which generalizes the
original Spingarn's splitting method -- and of a parallel forward-backward algorithm for multi-term
composite convex optimization.

\paragraph{\textbf{Key words}:}  inexact proximal point methods, partial inverse method, splitting, composite optimization, 
	forward-backward, parallel,  iteration-complexity. 
\vspace{0.5cm}

\paragraph{\textbf{Mathematics Subject Classification (2000)}} 47H05, 47J20, 90C060, 90C33, 65K10.

\end{abstract}

 \bigskip

 \bigskip
\section{Introduction}
\label{sec:int}
In~\cite{spi-par.amo83}, J. E. Spingarn proposed and analyzed a proximal point type method -- called
the partial inverse method -- for solving the problem of finding a point in the graph of a maximal monotone
operator such that the first (primal) variable belongs to
a closed subspace and the second (dual) variable belongs to its orthogonal
complement. This problem encompasses 
%
%
minimization of convex functions over closed subspaces and inclusion problems given by the
sum of finitely many maximal monotone operators. Regarding the latter case, Spingarn
also derived an operator splitting method with the distinctive feature of allowing
parallel implementations. Spingarn's approach for solving the above mentioned
problem consists in recasting it as an inclusion problem for the partial inverse (a concept
coined by himself) of the monotone operator involved in the formulation of the problem
with respect to the closed subspace. That said, Spingarn's partial inverse method
essentially consists of Rockafellar's proximal point method (PPM) applied to this
monotone inclusion, which converges either under the assumption of exact computation of the resolvent
or under summable error criterion~\cite{roc-mon.sjco76}. The hybrid proximal extragradient
(HPE) method of Solodov and Svaiter~\cite{sol.sva-hyb.svva99} is an inexact version of the
Rockafellar's PPM which uses relative error tolerance criterion for solving each
proximal subproblem instead of summable error condition. The HPE method has been
used for many authors 
\cite{sol.sva-hyb.svva99,bot.cse-hyb.nfao15,cen.mor.yao-hyb.jota10,eck.sil-pra.mp13,he.mon-acc.siam16,ius.sos-pro.opt10,lol.par.sol-cla.jca09,mon.ort.sva-imp.coap14,mon.ort.sva-ada.coap16,MonSva10-2,sol.sva-ine.mor00,Sol-Sv:hy.unif}
as a framework for the design and analysis of several algorithms
for monotone inclusion problems, variational inequalities, saddle-point problems and convex optimization.
Its iteration-complexity has been established recently by Monteiro and Svaiter~\cite{mon.sva-hpe.siam10} and, as 
a consequence, it has proved the iteration-complexity of various important algorithms in optimization (which use the HPE method
as a framework) including Tseng's forward-backward method, Korpelevich extragradient method and
the alternating direction method of multipliers (ADMM)~\cite{MonSva10-2,mon.sva-hpe.siam10,gon.mel.mon-imp.pre16}.

In this paper, we propose and analyze the iteration-complexity of an inexact version of the
Spingarn's partial inverse method in the light of the recent developments in the iteration-complexity of the HPE method.
We introduce a notion of approximate solution of the above mentioned Spingarn's problem 
and prove that our proposed method can be regarded as a special instance of the HPE method
applied to this problem and, as a consequence, we obtain
pointwise and ergodic iteration-complexity results for our inexact partial inverse method.
As applications, we propose and study the iteration-complexity of an inexact operator splitting
method for solving monotone inclusions with the sum of finitely many maximal monotone
operators as well as of a parallel forward-backward algorithm for multi-term composite
convex optimization.
We also briefly discuss how a different inexact version 
of the Spingarn's partial inverse method proposed and studied
in~\cite{bur.sag.sch-ine.oms06} is related  to our method.

\vspace{0.5cm}
\noindent
{\bf Contents.} Section \ref{sec:bm} contains two subsections. Subsection \ref{sec:gn}
presents some general results and the basic notation we need in this paper.
Subsection \ref{sec:pp} is devoted to present the iteration-complexity
of the HPE method and to briefly discuss the method of \cite{bur.sag.sch-ine.oms06}.
Section \ref{sec:pi} presents our main algorithms and its iteration-complexity.
Finally, in Section \ref{sec:spin} we show how the results of Section \ref{sec:pi}
can be used to derive an operator splitting method and a parallel forward-backward
method for solving multi-term composite convex optimization.

\section{Background Materials and Notation}
\label{sec:bm}
This section contains two subsections. In Subsection \ref{sec:gn}
we present the general notation as well as some basic facts about
maximal monotone operators and convex analysis. In Subsection \ref{sec:pp}
we review some important facts about the iteration-complexity
of the hybrid proximal extragradient (HPE) method 
and study some properties of a variant of it.

\subsection{General Results and Notation}
\label{sec:gn}
We denote by $\HH$ a real Hilbert space with
inner product $\inner{\cdot}{\cdot}$ and induced norm $\|\cdot\|:=\sqrt{\inner{\cdot}{\cdot}\textbf{}}$.
For $m\geq 2$, the Hilbert space $\HH^m:=\HH\times \HH\times \cdots \times \HH$
will be endowed with the inner product $\inner{(x_1,\dots, x_m)}{(x_1',\dots, x_m')}:=\sum_{i=1}^m\,\inner{x_i}{x_i'}$
and norm $\|\cdot\|:=\sqrt{\inner{\cdot}{\cdot}\textbf{}}$.

For a set-valued map $S:\HH\tos \HH$, its \emph{graph}
and \emph{domain} are taken respectively as 
$\mbox{G}(S)=\{(x,v)\in \HH\times \HH\,:\, v\in S(x)\}$ and $\mbox{D}(S)=\{x\in \HH\,:\, S(x)\neq \emptyset\}$.
The \emph{inverse} of $S$ is $S^{-1}:\HH\tos \HH$ such that $v\in S(x)$ if and only if $x\in S^{-1}(v)$.
Given $S,S':\HH\tos \HH$ and $\lambda>0$ we define
$S+S':\HH\tos \HH$ and $\lambda S:\HH\tos \HH$
by $(S+S')(x)=S(x)+S'(x)$ and $(\lambda S)(x)=\lambda S(x)$
for all $x\in \HH$, respectively. 
Given set-valued maps $S_i:\HH\tos \HH$, for $i=1,\dots, m$,
we define its product by
\begin{align}
 \label{eq:def.prs}
 S_1\times S_2\times \dots \times S_m:\HH^m\tos \HH^m,
 \quad (x_1,x_2,\dots, x_m)\mapsto S_1(x_1)\times S_2(x_2)\times \cdots \times S_m(x_m).
\end{align}

An operator $T:\HH\tos \HH$ is \emph{monotone} if
\[
 \inner{v-v'}{x-x'}\geq 0\;\;\;\mbox{whenever}\;\;\; (x,v),(x',v')\in \mbox{G}(T).
\]
It is \emph{maximal monotone} if it is monotone and maximal in the following sense:
if $S:\HH\tos \HH$ is monotone and $\mbox{G}(T)\subset \mbox{G}(S)$, then $T=S$.
The \emph{resolvent} of a maximal monotone operator $T$
is $(T+I)^{-1}$, and $\tilde z=(T+I)^{-1}z$ if and only if $z-\tilde z\in T(\tilde z)$.
%
%
For $T:\HH\tos\HH$ maximal monotone and $\varepsilon\geq 0$, the $\varepsilon$-enlargement of 
$T$~\cite{bur.ius.sva-enl.svva97,leg.the-sub.svva96}
is the operator $T^{\varepsilon}:\HH\tos\HH$ defined by
\begin{align}
 \label{eq:def.teps}
 T^{\varepsilon}(x):=\{v\in \HH\;\;:\;\;\inner{v'-v}{x'-x}\geq -\varepsilon\;\;\forall (x',v')\in \mbox{G}(T)\}\quad \forall x\in \HH.
\end{align}
Note that $T(x)\subset T^{\varepsilon}(x)$ for all $x\in \HH$.

The following summarizes some useful properties of $T^{\varepsilon}$ which will be useful
in this paper.
\begin{proposition}
\label{pr:teps}
Let $T, S:\HH\tos \HH$ be set-valued maps.  Then,
\begin{itemize}
\item[\emph{(a)}] if $\varepsilon \leq \varepsilon'$, then
$T^{\varepsilon}(x)\subseteq T^{\varepsilon'}(x)$ for every $x \in \HH$;
\item[\emph{(b)}] $T^{\varepsilon}(x)+S^{\,\varepsilon'}(x) \subseteq
(T+S)^{\varepsilon+\varepsilon'}(x)$ for every $x \in \HH$ and
$\varepsilon, \varepsilon'\geq 0$;
\item[\emph{(c)}] $T$ is monotone if, and only if, $T  \subseteq T^{0}$;
\item[\emph{(d)}] $T$ is maximal monotone if, and only if, $T = T^{0}$;
\item [\emph{(e)}] if $f:X\to\overline{\R}:=\R\cup\{-\infty,+\infty\}$ is proper, convex and closed, then
  $\partial_\varepsilon f(x)\subseteq (\partial f)^{\varepsilon}(x)$ for
  any $\varepsilon \geq 0$ and $x\in \HH$.
\end{itemize}
\end{proposition}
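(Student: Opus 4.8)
The plan is to prove each item by directly unwinding the defining inequality in \eqref{eq:def.teps}; items (a)--(c) are essentially rephrasings, while (d) carries the only genuine content. For (a), if $v\in T^{\varepsilon}(x)$ then $\inner{v'-v}{x'-x}\geq -\varepsilon\geq -\varepsilon'$ for every $(x',v')\in \mbox{G}(T)$, so $v\in T^{\varepsilon'}(x)$. For (b), I would write a generic element of the left-hand side as $v=v_T+v_S$ with $v_T\in T^{\varepsilon}(x)$ and $v_S\in S^{\varepsilon'}(x)$, fix $(x',w')\in \mbox{G}(T+S)$, split $w'=w'_T+w'_S$ with $w'_T\in T(x')$ and $w'_S\in S(x')$, and add the two defining inequalities $\inner{w'_T-v_T}{x'-x}\geq -\varepsilon$ and $\inner{w'_S-v_S}{x'-x}\geq -\varepsilon'$ to get $\inner{w'-v}{x'-x}\geq -(\varepsilon+\varepsilon')$. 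For (c), the inclusion $T\subseteq T^{0}$ states precisely that every $(x,v)\in\mbox{G}(T)$ satisfies $\inner{v'-v}{x'-x}\geq 0$ for all $(x',v')\in\mbox{G}(T)$, which is verbatim the monotonicity of $T$, so both directions are immediate.

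The heart of the proposition is (d), and it is where I expect the main obstacle. For the forward implication I would first invoke (c) to obtain $T\subseteq T^{0}$, and then derive the reverse inclusion from maximality: if $v\in T^{0}(x)$, then by definition $\inner{v'-v}{x'-x}\geq 0$ for every $(x',v')\in\mbox{G}(T)$, which means that $(x,v)$ can be adjoined to $\mbox{G}(T)$ without destroying monotonicity; maximality then forces $(x,v)\in\mbox{G}(T)$, i.e. $v\in T(x)$, whence $T^{0}\subseteq T$ and so $T=T^{0}$. For the converse I would assume $T=T^{0}$, note that $T\subseteq T^{0}$ together with (c) already yields monotonicity, and then check maximality directly: if $S$ is monotone with $\mbox{G}(T)\subseteq\mbox{G}(S)$ and $(x,v)\in\mbox{G}(S)$, testing monotonicity of $S$ against the points of $\mbox{G}(T)$ gives $\inner{v'-v}{x'-x}\geq 0$ for all $(x',v')\in\mbox{G}(T)$, i.e. $v\in T^{0}(x)=T(x)$, so $\mbox{G}(S)\subseteq\mbox{G}(T)$ and $S=T$. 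The subtle point to articulate carefully is the equivalence between ``$v\in T^{0}(x)$'' and ``$\mbox{G}(T)\cup\{(x,v)\}$ remains monotone,'' since it is exactly this reformulation that makes the maximality hypothesis applicable.

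Finally, for (e) I would fix $v\in\partial_\varepsilon f(x)$ and $(x',v')\in\mbox{G}(\partial f)$ and add the two subgradient inequalities $f(x')\geq f(x)+\inner{v}{x'-x}-\varepsilon$ and $f(x)\geq f(x')+\inner{v'}{x-x'}$. The function values cancel, leaving $0\geq\inner{v}{x'-x}-\inner{v'}{x'-x}-\varepsilon$, which rearranges to $\inner{v'-v}{x'-x}\geq -\varepsilon$; since $(x',v')\in\mbox{G}(\partial f)$ was arbitrary, this is exactly $v\in(\partial f)^{\varepsilon}(x)$. Properness and closedness of $f$ enter only to guarantee that the subgradient inequalities used above involve finite function values where needed.
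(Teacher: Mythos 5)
Your proof is correct in every item. The paper itself offers no proof of this proposition --- it is quoted as standard material from the $\varepsilon$-enlargement literature --- and your argument (direct unwinding of the definition in each item, with the graph-augmentation reformulation of maximality doing the real work in (d), and the cancellation of finite function values in (e)) is exactly the standard one found in those references.
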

Throughout this work we adopt standard notation of convex analysis for subdiferentials, 
$\varepsilon$-subdiferentials, etc. 
Moreover, for a closed subspace $V\subseteq \HH$
we denote by $V^\perp$ its \emph{orthogonal complement}
and by $P_V$ and $P_{V^\perp}$ the \emph{orthogonal
projectors} onto $V$ and $V^\perp$, respectively.
The \emph{Spingarn's partial inverse}~\cite{spi-par.amo83} of a set-valued map $S:\HH\tos \HH$
with respect to a closed subspace $V$ of $\HH$ is the set-valued
operator $S_V:\HH\tos \HH$ whose graph is
\begin{align}
 \label{eq:def.ig}
 \mbox{G}(S_V):=\{(z,v)\in \HH\times \HH\;:\; P_V(v)+P_{V^\perp}(z)\in S(P_V(z)+P_{V^\perp}(v))\}.
\end{align}
The following lemma will be important for us.

\begin{lemma}\emph{(\cite[Lemma 3.1]{bur.sag.sch-ine.oms06})}
 \label{lm:bss}
 Let $T:\HH\tos \HH$ be a maximal monotone operator, $V\subset \HH$ a closed subspace and $\varepsilon>0$. Then,
\begin{align*}
  (T_V)^\varepsilon=(T^{\varepsilon})_V.
\end{align*}
\end{lemma}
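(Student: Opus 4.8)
The plan is to reduce the identity to two elementary algebraic properties of a single linear map on $\HH\times\HH$. Define $\Phi\colon\HH\times\HH\to\HH\times\HH$ by
\begin{align*}
 \Phi(z,v):=\left(P_V z+P_{V^\perp}v,\; P_V v+P_{V^\perp}z\right).
\end{align*}
Reading off the definition \eqref{eq:def.ig} of the partial inverse, for \emph{any} $S\colon\HH\tos\HH$ one has $(z,v)\in\mbox{G}(S_V)$ if and only if $\Phi(z,v)\in\mbox{G}(S)$; that is, $\mbox{G}(S_V)=\Phi^{-1}(\mbox{G}(S))$. Thus the whole statement should follow once $\Phi$ is understood, applied once with $S=T$ and once with $S=T^\varepsilon$.

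First I would record the two properties of $\Phi$ on which the argument rests. Using $P_VP_{V^\perp}=0$, $P_V^2=P_V$, $P_{V^\perp}^2=P_{V^\perp}$ and $P_V+P_{V^\perp}=I$, a direct computation shows that $\Phi$ is a linear involution, $\Phi\circ\Phi=I$, so that $\Phi^{-1}=\Phi$ and hence $\mbox{G}(S_V)=\Phi(\mbox{G}(S))$. Second, introduce the quadratic form $\beta(a,b):=\inner{a}{b}$ on $\HH\times\HH$; the same orthogonality relations give the invariance
\begin{align*}
 \beta\big(\Phi(a,b)\big)=\inner{P_V a+P_{V^\perp}b}{P_V b+P_{V^\perp}a}=\inner{a}{b}=\beta(a,b),
\end{align*}
where the cross terms vanish because $V\perp V^\perp$ and the surviving terms reassemble $\inner{a}{b}$ via $P_V+P_{V^\perp}=I$. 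These two identities are the only genuine computations; everything else is formal.

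Finally I would combine them. Rewriting the definition \eqref{eq:def.teps} of the enlargement, $(x,v)\in\mbox{G}(T^\varepsilon)$ holds exactly when $\beta\big((x,v)-q\big)\ge-\varepsilon$ for every $q\in\mbox{G}(T)$. Fixing $(z,v)$ and setting $(\tilde z,\tilde v):=\Phi(z,v)$, I would chain the equivalences
\begin{align*}
 (z,v)\in\mbox{G}\big((T_V)^\varepsilon\big)
 &\iff \beta\big((z,v)-q\big)\ge-\varepsilon\quad\forall\,q\in\mbox{G}(T_V)=\Phi(\mbox{G}(T))\\
 &\iff \beta\big(\Phi((\tilde z,\tilde v)-r)\big)\ge-\varepsilon\quad\forall\,r\in\mbox{G}(T)\\
 &\iff \beta\big((\tilde z,\tilde v)-r\big)\ge-\varepsilon\quad\forall\,r\in\mbox{G}(T)\\
 &\iff (\tilde z,\tilde v)\in\mbox{G}(T^\varepsilon)\iff (z,v)\in\Phi\big(\mbox{G}(T^\varepsilon)\big)=\mbox{G}\big((T^\varepsilon)_V\big),
\end{align*}
where the second line writes $q=\Phi(r)$ and $(z,v)=\Phi(\tilde z,\tilde v)$ and uses linearity of $\Phi$, and the third line uses the invariance of $\beta$. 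This yields $(T_V)^\varepsilon=(T^\varepsilon)_V$. The crux of the proof is the invariance of $\beta$ under $\Phi$: it is exactly the statement that $\Phi$ transports the monotonicity pairing unchanged, which is what allows the enlargement inequality to pass through the partial-inverse change of variables. I also note that the argument never uses $\varepsilon>0$, so it delivers the equality for every $\varepsilon\ge0$.
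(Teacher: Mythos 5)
Your proof is correct. Note that the paper itself does not prove this lemma; it simply imports it as Lemma 3.1 of the cited reference, so there is no in-paper argument to compare against. Your argument — packaging the partial inverse as the involution $\Phi(z,v)=(P_Vz+P_{V^\perp}v,\,P_Vv+P_{V^\perp}z)$ acting on graphs, and observing that $\Phi$ preserves the pairing $\inner{a}{b}$ so that the enlargement inequality passes through the change of variables — is the standard route and is complete; your closing observations that maximality of $T$ is never used and that the identity holds for all $\varepsilon\geq 0$ are also accurate.
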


Next we present the transportation formula
for $\varepsilon$-enlargements.

\begin{theorem}\emph{(\cite[Theorem 2.3]{bur.sag.sva-enl.col99})}
 \label{th:tf}
  Suppose $T:\HH\tos \HH$ is maximal monotone and
	let $x_\ell, u_\ell\in \HH$, $\varepsilon_\ell, \alpha_\ell\in \R_+$,
	for $\ell=1,\dots, k$, be such that
	 \[
	 u_\ell\in T^{\varepsilon_\ell}(x_\ell),\quad \ell=1,\dots, k,\quad  \sum_{\ell=1}^k\,\alpha_\ell=1,
	\]
	and define
	\[
	 x^a:=\sum_{\ell=1}^k\,\alpha_\ell\, x_\ell\,,\quad u^a:=\sum_{\ell=1}^k\,\alpha_\ell\; u_\ell\,,\quad
	 \varepsilon^a:=\sum_{\ell=1}^k\,\alpha_\ell \left[\varepsilon_\ell+\inner{x_\ell-x^a}{u_\ell-u^a}\right].
	\]
	 Then, the following statements hold:
	 \begin{itemize}
	 \item[\emph{(a)}] $\varepsilon^a\geq 0$ and $u^a\in T^{\varepsilon^a}(x^a)$.
	  \item[\emph{(b)}] If, in addition, $T=\partial f$ for some proper, convex and closed function
		$f$ and $u_\ell\in \partial_{\varepsilon_\ell} f(x_{\ell})$ for $\ell=1,\dots, k$,
		then $u^a\in \partial_{\varepsilon^a} f(x^a)$.
	\end{itemize}
\end{theorem}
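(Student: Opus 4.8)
The plan is to reduce both items to the defining inequality of the enlargement in \eqref{eq:def.teps} together with a single algebraic identity, treating (a) and (b) in parallel. First I would record, for an \emph{arbitrary} pair $(x',v')\in\HH\times\HH$, the identity
\begin{align*}
\inner{v'-u^a}{x'-x^a}=\sum_{\ell=1}^k\alpha_\ell\inner{v'-u_\ell}{x'-x_\ell}-\sum_{\ell=1}^k\alpha_\ell\inner{x_\ell-x^a}{u_\ell-u^a},
\end{align*}
which uses only $\sum_\ell\alpha_\ell=1$ and the definitions of $x^a$ and $u^a$; the cross terms collapse precisely because $\sum_\ell\alpha_\ell(x_\ell-x^a)=0$ and $\sum_\ell\alpha_\ell(u_\ell-u^a)=0$. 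The last sum above is exactly the ``covariance'' part of $\varepsilon^a$, and a short computation gives $\sum_\ell\alpha_\ell\inner{x_\ell-x^a}{u_\ell-u^a}=\sum_\ell\alpha_\ell\inner{u_\ell}{x_\ell}-\inner{u^a}{x^a}$, an identity I would keep on hand for part (b).

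For the inclusion in (a), I would fix $(x',v')\in\mbox{G}(T)$. Since $u_\ell\in T^{\varepsilon_\ell}(x_\ell)$, \eqref{eq:def.teps} gives $\inner{v'-u_\ell}{x'-x_\ell}\geq-\varepsilon_\ell$ for each $\ell$; multiplying by $\alpha_\ell\geq0$, summing, and substituting into the identity yields
\begin{align*}
\inner{v'-u^a}{x'-x^a}\geq-\sum_{\ell=1}^k\alpha_\ell\varepsilon_\ell-\sum_{\ell=1}^k\alpha_\ell\inner{x_\ell-x^a}{u_\ell-u^a}=-\varepsilon^a.
\end{align*}
As $(x',v')\in\mbox{G}(T)$ was arbitrary, this is precisely $u^a\in T^{\varepsilon^a}(x^a)$, modulo the sign condition $\varepsilon^a\geq0$.

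That sign condition is the one step that is not pure bookkeeping, and I expect it to be the main obstacle: the covariance term can a priori be negative, so no algebra alone forces $\varepsilon^a\geq0$. Here I would invoke maximal monotonicity through the fact — recorded just after the definition of the resolvent — that $(T+I)^{-1}$ is defined on all of $\HH$ (Minty's theorem). Setting $\bar x:=(T+I)^{-1}(x^a+u^a)$ and $\bar v:=x^a+u^a-\bar x$, one has $(\bar x,\bar v)\in\mbox{G}(T)$ with $\bar v-u^a=-(\bar x-x^a)$. Feeding this particular pair into the inequality just derived gives $-\norm{\bar x-x^a}^2=\inner{\bar v-u^a}{\bar x-x^a}\geq-\varepsilon^a$, whence $\varepsilon^a\geq\norm{\bar x-x^a}^2\geq0$, completing (a).

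Finally, for (b) I would argue directly from the $\varepsilon$-subdifferential inequalities rather than from the enlargement. For every $y\in\HH$ and each $\ell$, $u_\ell\in\partial_{\varepsilon_\ell}f(x_\ell)$ gives $f(y)\geq f(x_\ell)+\inner{u_\ell}{y-x_\ell}-\varepsilon_\ell$; taking the $\alpha_\ell$-weighted sum and using $u^a=\sum_\ell\alpha_\ell u_\ell$, the target inequality $f(y)\geq f(x^a)+\inner{u^a}{y-x^a}-\varepsilon^a$ reduces, after inserting the covariance form of $\varepsilon^a$ from the first paragraph and cancelling the common terms, to $\sum_\ell\alpha_\ell f(x_\ell)\geq f(x^a)=f\!\left(\sum_\ell\alpha_\ell x_\ell\right)$ — that is, exactly Jensen's inequality for the convex $f$. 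Thus (b) requires nothing beyond convexity once the algebra of (a) is assembled.
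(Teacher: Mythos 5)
The paper does not prove this statement: it is imported verbatim as \cite[Theorem 2.3]{bur.sag.sva-enl.col99} (the transportation formula of Burachik, Sagastiz\'abal and Svaiter), so there is no in-paper argument to compare yours against. On its own merits your proof is correct and complete. The opening identity checks out (both sides equal $\inner{v'}{x'}-\inner{v'}{x^a}-\inner{u^a}{x'}+\inner{u^a}{x^a}$ after expansion), and averaging the enlargement inequalities over $\mbox{G}(T)$ then gives $\inner{v'-u^a}{x'-x^a}\geq-\varepsilon^a$ exactly as you say. You correctly identify $\varepsilon^a\geq 0$ as the only non-bookkeeping step; your resolvent argument via Minty's theorem is valid (note the paper records the equivalence $\tilde z=(T+I)^{-1}z\iff z-\tilde z\in T(\tilde z)$ but not surjectivity of $T+I$, so you are importing Minty from outside the paper — an equally short alternative staying within the paper's toolkit is to observe that if $\varepsilon^a<0$ then Proposition \ref{pr:teps}(a,d) would give $u^a\in T^0(x^a)=T(x^a)$, and taking $(x',v')=(x^a,u^a)$ in your inequality yields $0\geq-\varepsilon^a>0$, a contradiction). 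For (b), your reduction of the $\varepsilon$-subdifferential inequality to Jensen via the covariance identity $\sum_\ell\alpha_\ell\inner{x_\ell-x^a}{u_\ell-u^a}=\sum_\ell\alpha_\ell\inner{u_\ell}{x_\ell}-\inner{u^a}{x^a}$ is exactly right, and Jensen also guarantees $f(x^a)<\infty$ so that the conclusion $u^a\in\partial_{\varepsilon^a}f(x^a)$ is meaningful.
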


The following results will also be useful in this work.

\begin{lemma}\emph{(\cite[Lemma 3.2]{mon.sva-icc.pre})}
 \label{lm:ms}
 If $f:\HH\to \BR$ is a proper, closed and convex function and $x,\tilde x, v\in \HH$
are such that $v\in \partial f(x)$ and $f(\tilde x)<\infty$, then $v\in \partial_\varepsilon f(\tilde x)$
for every $\varepsilon\geq f(\tilde x)-f(x)-\inner{v}{\tilde x-x}$. 
\end{lemma}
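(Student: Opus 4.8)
The plan is to work directly from the definitions of the subdifferential and the $\varepsilon$-subdifferential and to reduce the claim to a single affine minorization inequality. Recall that $v\in\partial f(x)$ means $f(y)\geq f(x)+\inner{v}{y-x}$ for all $y\in\HH$, while the conclusion $v\in\partial_\varepsilon f(\tilde x)$ is the assertion that $f(y)\geq f(\tilde x)+\inner{v}{y-\tilde x}-\varepsilon$ for all $y\in\HH$. Thus the entire proof amounts to showing that the affine minorant of $f$ furnished by $v$ at the base point $x$ can be re-centered at $\tilde x$ at the cost of the slack $\varepsilon$.

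First I would fix an arbitrary $y\in\HH$ and start from the subgradient inequality at $x$, namely $f(y)\geq f(x)+\inner{v}{y-x}$. The key algebraic step is to split the inner product through the point $\tilde x$, writing $\inner{v}{y-x}=\inner{v}{y-\tilde x}+\inner{v}{\tilde x-x}$. Substituting this yields $f(y)\geq f(\tilde x)+\inner{v}{y-\tilde x}+\big[f(x)+\inner{v}{\tilde x-x}-f(\tilde x)\big]$. By hypothesis $\varepsilon\geq f(\tilde x)-f(x)-\inner{v}{\tilde x-x}$, which is exactly the statement that the bracketed quantity is bounded below by $-\varepsilon$; hence $f(y)\geq f(\tilde x)+\inner{v}{y-\tilde x}-\varepsilon$. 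Since $y$ was arbitrary, this is precisely the defining inequality for $v\in\partial_\varepsilon f(\tilde x)$.

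There is no genuine obstacle here: the argument is a one-line rearrangement once the inner product is split through $\tilde x$. The only points requiring a moment's care are well-definedness issues. One must observe that $v\in\partial f(x)$ forces $x\in\dom f$, so that $f(x)$ is finite, and that the hypothesis $f(\tilde x)<\infty$ together with properness (so $f>-\infty$ everywhere) guarantees that the threshold $f(\tilde x)-f(x)-\inner{v}{\tilde x-x}$ is a well-defined real number, making the constraint on $\varepsilon$ meaningful. Finally, the case $f(y)=+\infty$ is covered automatically, since any inequality with an infinite left-hand side holds trivially; closedness of $f$ plays no role in this particular implication, but keeping it in the statement is harmless.
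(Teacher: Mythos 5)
Your proof is correct: splitting the subgradient inequality at $x$ through the point $\tilde x$ and absorbing the deficit $f(\tilde x)-f(x)-\inner{v}{\tilde x-x}$ into $\varepsilon$ is exactly the standard argument, and your remarks on well-definedness ($x\in\dom f$ forced by $v\in\partial f(x)$ and properness, finiteness of $f(\tilde x)$) are the right ones. The paper itself gives no proof of this lemma --- it is quoted from \cite[Lemma 3.2]{mon.sva-icc.pre} --- so there is nothing to compare against; your one-line rearrangement is precisely the intended derivation.
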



\begin{lemma}
\label{lm:crules}
 Let $f:\HH\to \BR$ be proper, closed and convex. Then, the following holds
for every $\lambda, \varepsilon>0$:
 \begin{itemize}
 \item[\emph{(a)}] $\partial (\lambda f)=\lambda \partial f$;
 \item[\emph{(b)}] $\partial_\varepsilon (\lambda f)=\lambda \partial_{\varepsilon/\lambda} f$.
 \end{itemize}
\end{lemma}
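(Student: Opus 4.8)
The plan is to prove both identities by directly unwinding the definitions of the subdifferential and the $\varepsilon$-subdifferential, using only that $\lambda>0$ so that dividing an inequality by $\lambda$ preserves its direction. The key observation is that each defining inequality for $\lambda f$ is, after division by $\lambda$, exactly a defining inequality for $f$ with a rescaled subgradient and (in part (b)) a rescaled tolerance; since this rescaling is an equivalence, both set inclusions are obtained simultaneously rather than one at a time. In fact part (a) is just the special case $\varepsilon=0$ of part (b), but I would present them in parallel for clarity.

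For part (a), first I would fix $x\in\HH$ and recall that $v\in\partial(\lambda f)(x)$ means $(\lambda f)(y)\geq(\lambda f)(x)+\inner{v}{y-x}$ for all $y\in\HH$, i.e. $\lambda f(y)\geq\lambda f(x)+\inner{v}{y-x}$. Dividing by $\lambda>0$ gives the equivalent statement $f(y)\geq f(x)+\inner{\lambda^{-1}v}{y-x}$ for all $y$, which says precisely $\lambda^{-1}v\in\partial f(x)$, that is, $v\in\lambda\,\partial f(x)$. Because every step is an equivalence, this chain establishes $\partial(\lambda f)(x)=\lambda\,\partial f(x)$, and since $x$ was arbitrary the identity follows.

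For part (b), I would repeat the same computation with the tolerance term. Fix $x\in\HH$; then $v\in\partial_\varepsilon(\lambda f)(x)$ means $\lambda f(y)\geq\lambda f(x)+\inner{v}{y-x}-\varepsilon$ for all $y\in\HH$. Dividing by $\lambda>0$ yields the equivalent inequality $f(y)\geq f(x)+\inner{\lambda^{-1}v}{y-x}-\varepsilon/\lambda$ for all $y$, which is exactly the condition $\lambda^{-1}v\in\partial_{\varepsilon/\lambda}f(x)$, i.e. $v\in\lambda\,\partial_{\varepsilon/\lambda}f(x)$. Here I would note that $\varepsilon/\lambda>0$ is again an admissible tolerance, so the right-hand side is well defined, and the equivalence again delivers the set equality in both directions.

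There is essentially no obstacle here: the result is a one-line definitional scaling, and the only point requiring any care is the sign hypothesis $\lambda>0$, which is what guarantees that division preserves the inequalities and keeps $\varepsilon/\lambda$ nonnegative. I would simply remark that properness, closedness, and convexity of $f$ are inherited by $\lambda f$ for $\lambda>0$, so that both $\partial(\lambda f)$ and $\partial_\varepsilon(\lambda f)$ are meaningful objects in the first place.
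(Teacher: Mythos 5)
Your proof is correct: the paper states this lemma without proof, treating it as a standard calculus rule for subdifferentials and $\varepsilon$-subdifferentials, and your direct unwinding of the definitions (dividing the defining inequality by $\lambda>0$) is exactly the standard argument one would supply. The only point worth a passing remark is that when $f(x)=+\infty$ both sides of each identity are empty, so the equivalence chain is vacuously consistent there as well.
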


\begin{lemma}\emph{(\cite[Lemmas 1.2.3]{nes-int.book})}
\label{lm:ineq.lin}
Let $f:\HH\to \R$ be convex and continuously differentiable
such that there exists a nonnegative constant $L$ satisfying
\[
 \norm{\nabla f(x)-\nabla f(y)}\leq L\norm{x-y}\quad \forall x,y\in \HH\,.
\]
Then,
\begin{equation}
\label{eq:ineq.lin}
 0\leq f(x)-f(y)-\inner{\nabla f(y)}{x-y}\leq \dfrac{L}{2}\norm{x-y}^2
\end{equation}
for all $x,\tilde x\in \HH$.
\end{lemma}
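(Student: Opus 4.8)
The plan is to establish the two inequalities in \eqref{eq:ineq.lin} independently, since they have essentially different origins: the left-hand inequality is a pure consequence of convexity, whereas the right-hand inequality is the quadratic upper bound (the ``descent lemma'') which is where the Lipschitz hypothesis on $\nabla f$ enters. Fix $x,y\in\HH$ throughout.

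For the left inequality, I would invoke the standard first-order characterization of convexity for a differentiable function: since $f$ is convex and differentiable, its graph lies above each of its tangent hyperplanes, i.e. $f(x)\geq f(y)+\inner{\nabla f(y)}{x-y}$. (If one prefers to derive this from scratch rather than cite it, it follows by writing $f(y+t(x-y))\leq (1-t)f(y)+t f(x)$ for $t\in(0,1]$, rearranging to $\tfrac{1}{t}\big(f(y+t(x-y))-f(y)\big)\leq f(x)-f(y)$, and letting $t\downarrow 0$, the left side converging to $\inner{\nabla f(y)}{x-y}$ by differentiability.) This is precisely $0\leq f(x)-f(y)-\inner{\nabla f(y)}{x-y}$.

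For the right inequality, I would reduce everything to the one–dimensional auxiliary function $g(t):=f\big(y+t(x-y)\big)$ on $[0,1]$. By the chain rule and continuous differentiability of $f$, $g$ is continuously differentiable with $g'(t)=\inner{\nabla f(y+t(x-y))}{x-y}$, so the fundamental theorem of calculus gives
\[
f(x)-f(y)=g(1)-g(0)=\int_0^1 \inner{\nabla f(y+t(x-y))}{x-y}\,dt.
\]
Subtracting the constant $\inner{\nabla f(y)}{x-y}=\int_0^1\inner{\nabla f(y)}{x-y}\,dt$ yields
\[
f(x)-f(y)-\inner{\nabla f(y)}{x-y}=\int_0^1 \inner{\nabla f(y+t(x-y))-\nabla f(y)}{x-y}\,dt.
\]
Now I would bound the integrand by Cauchy--Schwarz and then apply the Lipschitz hypothesis with the pair of points $y+t(x-y)$ and $y$, whose distance is $t\norm{x-y}$, obtaining the pointwise estimate $\inner{\nabla f(y+t(x-y))-\nabla f(y)}{x-y}\leq L\,t\,\norm{x-y}^2$. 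Integrating $\int_0^1 L\,t\,dt=L/2$ then produces the desired bound $\tfrac{L}{2}\norm{x-y}^2$.

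I do not expect any serious obstacle here; the only point requiring a little care is the legitimacy of the fundamental-theorem-of-calculus step in the Hilbert-space setting, but this is fully handled by reducing to the scalar $C^1$ function $g$ via the chain rule, so no stronger regularity (e.g. twice differentiability) is needed. The argument uses convexity only for the lower bound and Lipschitz continuity of $\nabla f$ only for the upper bound, which matches the two-sided form of the statement.
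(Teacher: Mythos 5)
Your proof is correct and complete: the lower bound via the first-order characterization of convexity and the upper bound via the fundamental theorem of calculus along the segment, Cauchy--Schwarz, and the Lipschitz hypothesis is exactly the standard argument. The paper does not prove this lemma itself but cites it from Nesterov's book (Lemma 1.2.3), whose proof is essentially the one you give, so there is nothing to reconcile.
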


\subsection{On the Hybrid Proximal Extragradient Method}
\label{sec:pp}

In this subsection we consider the \emph{monotone inclusion problem}
\begin{align}
\label{eq:cp}
 \begin{aligned}
  0 \in T(z)
 \end{aligned}
\end{align}
where $T:\HH\tos \HH$ is a point-to-set
maximal monotone operator. We also assume
that the solution set $T^{-1}(0)$ of \eqref{eq:cp} is nonempty.  
Since problem \eqref{eq:cp} appears in different fields of applied mathematics
including optimization, equilibrium theory and partial differential equations, it is
desirable to have efficient numerical schemes to find approximate solutions of it.

An \emph{exact proximal point method} (PPM) iteration
for \eqref{eq:cp} is
\begin{align}
 \label{eq:exact.prox}
 z_k= (\lambda_k T+I)^{-1}z_{k-1}
\end{align}
where $z_{k-1}$ and $z_k$ are the current and new iterate, respectively, and $\lambda_k>0$ is
a sequence of stepsizes. The practical applicability of proximal point algorithms to concrete
problems depends on the availability of inexact versions
of such methods.  
In the seminal work \cite{roc-mon.sjco76}, Rockafellar proved that 
if $z_k$ is computed satisfying
\begin{align}
 \label{eq:inexact.prox}
 \norm{z_k-(\lambda_k T+I)^{-1}z_{k-1}}\leq \eta_k,\quad \sum_{k=1}^\infty\,\eta_k<\infty,
\end{align}
and $\{\lambda_k\}$ is bounded away from zero, then $\{z_k\}$ converges weakly to a solution 
of \eqref{eq:cp} -- assuming that there exists at least one of them.
New inexact versions of the PPM which use \emph{relative error tolerance} to compute approximate solutions
have been proposed and intensively studied in the last two decades 
\cite{sol.sva-hyb.svva99,MonSva10-2,sol.sva-ine.mor00,Sol-Sv:hy.unif,mon.sva-hpe.siam10,MonSva10-1,sol.sva-hyb.jca99}. The key idea behind
such methods~\cite{sol.sva-hyb.svva99} consists in decoupling \eqref{eq:exact.prox}
as an inclusion and an equation:
\begin{align}
  \label{eq:dec.prox}
  v_k\in T(z_k),\quad  \lambda_k v_k+z_k-z_{k-1}=0
\end{align} 
and in relaxing both of them according to relative error tolerance criteria.
The \emph{hybrid proximal extragradient} (HPE) 
method of \cite{sol.sva-hyb.svva99} has been used in the last few years as a framework for the analysis and development
of several algorithms for solving monotone inclusion, saddle-point and convex optimization
problems~\cite{sol.sva-hyb.svva99,bot.cse-hyb.nfao15,cen.mor.yao-hyb.jota10,eck.sil-pra.mp13,he.mon-acc.siam16,ius.sos-pro.opt10,lol.par.sol-cla.jca09,mon.ort.sva-imp.coap14,mon.ort.sva-ada.coap16,MonSva10-2,sol.sva-ine.mor00,Sol-Sv:hy.unif}. 

Next we present the HPE method.

\mgap
\mgap

\noindent
\fbox{
\addtolength{\linewidth}{-2\fboxsep}%
\addtolength{\linewidth}{-2\fboxrule}%
\begin{minipage}{\linewidth}
\begin{algorithm}
\label{hpe}
{\bf Hybrid proximal extragradient (HPE) method for \bf{(\ref{eq:cp})}}
\end{algorithm}
\begin{itemize}
\item[(0)] Let $z_0\in \HH$ and $\sigma\in [0,1[$ be given and set $k=1$.
\item [(1)] Compute $(\tilde z_k,v_k,\varepsilon_k)\in \HH\times \HH\times \R_+$
and $\lambda_k>0$ such that
\begin{align}
\label{eq:hpe}
 \begin{aligned}
  v_k\in T^{\varepsilon_k}(\tilde z_k),\quad  \norm{\lambda_kv_k+\tilde z_k-z_{k-1}}^2+
 2\lambda_k\varepsilon_k \leq \sigma^2\norm{\tilde z_k-z_{k-1}}^2.
\end{aligned}
\end{align} 
\item[(2)] Define 
 \begin{align}
  \label{eq:hpe2}
    z_k=z_{k-1}-\lambda_k v_k, 
   \end{align}
	set $k\leftarrow k+1$ and go to step 1.
   \end{itemize}
\noindent
\end{minipage}
} 
\mgap
\mgap

\noindent
\emph{Remarks.} 1) First note that condition \eqref{eq:hpe}
relaxes both the inclusion and the
equation in \eqref{eq:dec.prox}. Here, $T^{\varepsilon}(\cdot)$
is the $\varepsilon$-enlargement of $T$; it has the property that
$T(z)\subset T^{\varepsilon}(z)$ (see Subsection \ref{sec:gn} for details).
2) Instead of $\tilde z_k$, the next iterate $z_k$ is defined in \eqref{eq:hpe2}
as an extragradient step from $z_{k-1}$. 
3) Letting $\sigma=0$ and using Proposition \ref{pr:teps}(d)
we conclude from \eqref{eq:hpe} and \eqref{eq:hpe2} that $(z_k,v_k)$
and $\lambda_k>0$ satisfy \eqref{eq:dec.prox}, i.e., Algorithm \ref{hpe}
is an inexact version of the exact Rockafellar's PPM.
4) Algorithm \ref{hpe} serves also as a framework
for the analysis and development of several numerical schemes for solving concret instances
of \eqref{eq:cp}
(see, e.g., \cite{eck.sil-pra.mp13,mon.ort.sva-imp.coap14,mon.ort.sva-ada.coap16,MonSva10-2,mon.sva-hpe.siam10,MonSva10-1}); specific strategies for computing $(\tilde z_k,v_k,\varepsilon_k)$
and $\lambda_k>0$ satisfying \eqref{eq:hpe} depends on the particular instance of
\eqref{eq:cp} under consideration.
%

%


\mgap

In the last few years, starting with the paper \cite{mon.sva-hpe.siam10}, a lot of
research has been done to study and analyze the \emph{iteration-complexity} of
the HPE method and its special instances, including Tseng's forward-backward
splitting method, Korpelevich extragradient method, ADMM, etc~\cite{MonSva10-2,mon.sva-hpe.siam10,MonSva10-1}.
These iteration-complexity bounds for the HPE method are based on the following termination
criterion introduced in~\cite{mon.sva-hpe.siam10}: for given tolerances $\rho, \epsilon>0$, find
$\bar z,\bar v\in \HH$ and $\bar\varepsilon>0$ such that
$(z,v):=(\bar z,\bar v)$ and $\varepsilon:=\bar \varepsilon$ satisfy
\begin{align}
  \label{eq:tc}
	 v\in T^{\varepsilon}(z),\quad \norm{v}\leq \rho, \quad \varepsilon\leq \epsilon. 
\end{align}
Using Proposition \ref{pr:teps}(d) we find that if $\rho=\epsilon=0$ in \eqref{eq:tc} then $0\in T(\bar z)$, i.e., $\bar z$ is a solution 
of \eqref{eq:cp}.

Next we summarize the main results from \cite{mon.sva-hpe.siam10}
about \emph{pointwise} and 
\emph{ergodic} iteration-complexity
of the HPE method that we will need in this paper.
The \emph{aggregate stepsize sequence} 
$\{\Lambda_k\}$ and the \emph{ergodic sequences}
$\{\tilde {z}_k^a\}$, $\{\tilde v_k^a\}$, 
$\{\varepsilon_k^a\}$ associated
to $\{\lambda_k\}$ and
$\{\tilde {z}_k\}$,  $\{v_k\}$, and
$\{\varepsilon_k\}$ are, respectively,
\begin{align}
\label{eq:d.eg}
  \begin{aligned}
    &\Lambda_k:=\sum_{\ell=1}^k\, \lambda_\ell\,,\\
    &\tilde {z}_k^{\,a}:= \frac{1}{\;\Lambda_k}\;
   \sum_{\ell=1}^k\,\lambda_\ell\, \tilde {z}_\ell, \quad 
   v_k^{\,a}:= \frac{1}{\;\Lambda_k}\;\sum_{\ell=1}^k\, \lambda_\ell\, v_\ell,\\ 
   &\varepsilon_k^{\,a}:=
    \frac{1}{\;\Lambda_k}\;\sum_{\ell=1}^k\,\lambda_\ell (\varepsilon_\ell
    +\inner{\tilde {z}_\ell-
     \tilde {z}_k^{\,a}}{v_\ell-v_k^{\,a}}).
  \end{aligned}
\end{align}

\begin{theorem}[{\cite[Theorem 4.4(a) and 4.7]{mon.sva-hpe.siam10}}]
  \label{lm:rhpe2}
	Let $\{\tilde z_k\}$, $\{v_k\}$, etc be generated by \emph{Algorithm \ref{hpe}}
	and let $\{\tilde z_k^a\}$, $\{v_k^a\}$, etc be given in \eqref{eq:d.eg}. 
  Let also $d_0$ denote the distance of $z_0$ to $T^{-1}(0)\neq\emptyset$
	and assume that $\underline{\lambda}:=\inf \lambda_k>0$. The following
	statements hold.
  \begin{enumerate}
  \item[\emph{(a)}] For any $k\geq 1$,
	  there exists $i\in\{1,\dots,k\}$ 
    such that  
		\begin{align*}
      v_i\in T^{\varepsilon_i}(\tilde z_i),\quad  \norm{v_i}\leq \dfrac{d_0}{\underline{\lambda}\sqrt{k}}
			\sqrt{\dfrac{1+\sigma}{1-\sigma}},\quad  \varepsilon_i\leq \dfrac{\sigma^2 d_0^2}{2(1-\sigma^2)\underline{\lambda}k}\;;
    \end{align*}
	 \item[\emph{(b)}]
	  for any $k\geq 1$,
		 \begin{align*}
     v_k^a\in T^{\varepsilon_k^a}(\tilde z_k^a),\quad 
     \norm{v_k^a}\leq \dfrac{2d_0}{\underline{\lambda}k},\quad
     \varepsilon_k^a\leq \dfrac{2(1+\sigma/\sqrt{1-\sigma^2})d_0^2}{\underline{\lambda}k}\,.
    \end{align*}
	\end{enumerate}
\end{theorem}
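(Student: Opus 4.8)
The plan is to route every bound through a single Fej\'er-type contraction estimate. First I would fix a solution $z^*\in T^{-1}(0)$ and exploit that $(z^*,0)\in \mbox{G}(T)$: since $v_\ell\in T^{\varepsilon_\ell}(\tilde z_\ell)$, the definition \eqref{eq:def.teps} of the enlargement tested against $(z^*,0)$ gives $\inner{v_\ell}{\tilde z_\ell-z^*}\geq -\varepsilon_\ell$. Combining this with the extragradient update \eqref{eq:hpe2} (so that $\lambda_\ell v_\ell=z_{\ell-1}-z_\ell$) and expanding $\norm{z_\ell-z^*}^2$, the cross terms reorganize into the residual $r_\ell:=\lambda_\ell v_\ell+\tilde z_\ell-z_{\ell-1}$, and the HPE error criterion \eqref{eq:hpe} collapses everything to
\[
\norm{z_{\ell-1}-z^*}^2-\norm{z_\ell-z^*}^2\geq (1-\sigma^2)\norm{\tilde z_\ell-z_{\ell-1}}^2 .
\]
Telescoping from $\ell=1$ to $k$ and taking $z^*$ to be the projection of $z_0$ onto $T^{-1}(0)$ yields both $\norm{z_k-z^*}\leq d_0$ and $\sum_{\ell=1}^k\norm{\tilde z_\ell-z_{\ell-1}}^2\leq d_0^2/(1-\sigma^2)$, which are essentially the only two facts the remainder needs.

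For the pointwise statement (a), I would read off from \eqref{eq:hpe} the per-iteration bounds $2\lambda_\ell\varepsilon_\ell\leq\sigma^2\norm{\tilde z_\ell-z_{\ell-1}}^2$ and $\lambda_\ell\norm{v_\ell}=\norm{r_\ell-(\tilde z_\ell-z_{\ell-1})}\leq(1+\sigma)\norm{\tilde z_\ell-z_{\ell-1}}$; dividing by $\lambda_\ell\geq\underline{\lambda}$ expresses $\norm{v_\ell}$ and $\varepsilon_\ell$ through $\norm{\tilde z_\ell-z_{\ell-1}}$. I would then pick the index $i\in\{1,\dots,k\}$ minimizing $\norm{\tilde z_\ell-z_{\ell-1}}$; since the sum of squares is at most $d_0^2/(1-\sigma^2)$, the minimal term obeys $\norm{\tilde z_i-z_{i-1}}^2\leq d_0^2/((1-\sigma^2)k)$. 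Substituting and using $(1+\sigma)/\sqrt{1-\sigma^2}=\sqrt{(1+\sigma)/(1-\sigma)}$ delivers exactly the two displayed bounds.

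For the ergodic statement (b), the inclusion $v_k^a\in T^{\varepsilon_k^a}(\tilde z_k^a)$ is immediate from the transportation formula (Theorem \ref{th:tf}) applied with weights $\alpha_\ell=\lambda_\ell/\Lambda_k$, which at the same time certifies $\varepsilon_k^a\geq 0$. The bound on $\norm{v_k^a}$ is the cleanest part: the extragradient step telescopes, $\sum_{\ell=1}^k\lambda_\ell v_\ell=z_0-z_k$, so $v_k^a=(z_0-z_k)/\Lambda_k$, and $\norm{z_0-z_k}\leq\norm{z_0-z^*}+\norm{z_k-z^*}\leq 2d_0$ together with $\Lambda_k\geq\underline{\lambda}k$ gives $\norm{v_k^a}\leq 2d_0/(\underline{\lambda}k)$.

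The real work is the bound on $\varepsilon_k^a$, and this is where I expect the main obstacle. I would write $\Lambda_k\varepsilon_k^a=\sum_{\ell=1}^k\lambda_\ell\varepsilon_\ell+\sum_{\ell=1}^k\lambda_\ell\inner{\tilde z_\ell-\tilde z_k^a}{v_\ell-v_k^a}$ and convert the cross term, via the covariance identity and $\lambda_\ell v_\ell=z_{\ell-1}-z_\ell$, into a sum of three-point expansions $\inner{\tilde z_\ell-z^*}{z_{\ell-1}-z_\ell}$ together with a leftover term $-\inner{\tilde z_k^a-z^*}{z_0-z_k}$. Telescoping the expansions and adding $\sum_\ell\lambda_\ell\varepsilon_\ell$ produces a cancellation of all $\norm{r_\ell}^2$ contributions against the error criterion, leaving $\tfrac12 d_0^2$ minus the nonnegative budget $\tfrac{1-\sigma^2}{2}\sum_\ell\norm{\tilde z_\ell-z_{\ell-1}}^2$ and the leftover inner product. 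The delicate point is that a crude Cauchy--Schwarz estimate of $\inner{\tilde z_k^a-z^*}{z_0-z_k}$ is too lossy to reach the stated constant $2(1+\sigma/\sqrt{1-\sigma^2})$; instead one must control $\norm{\tilde z_k^a-z^*}$ through the weighted average of the steps $\norm{\tilde z_\ell-z_{\ell-1}}$ and absorb it against the accumulated $(1-\sigma^2)$-budget, which is exactly the bookkeeping carried out in \cite{mon.sva-hpe.siam10}. Dividing the resulting bound on $\Lambda_k\varepsilon_k^a$ by $\Lambda_k\geq\underline{\lambda}k$ then finishes the proof.
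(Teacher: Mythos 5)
A preliminary remark: the paper does not prove this theorem at all --- it is imported verbatim from \cite[Theorems 4.4(a) and 4.7]{mon.sva-hpe.siam10} --- so there is no in-paper proof to compare yours against; what follows judges your reconstruction on its own terms. Most of it is correct and complete. The contraction estimate $\norm{z_{\ell-1}-z^*}^2-\norm{z_\ell-z^*}^2\geq(1-\sigma^2)\norm{\tilde z_\ell-z_{\ell-1}}^2$ is derived exactly as it should be: expanding $\norm{z_{\ell-1}-\lambda_\ell v_\ell-z^*}^2$, using $\inner{v_\ell}{\tilde z_\ell-z^*}\geq-\varepsilon_\ell$ from \eqref{eq:def.teps}, and the algebraic identity $2\lambda_\ell\inner{v_\ell}{z_{\ell-1}-\tilde z_\ell}-\norm{\lambda_\ell v_\ell}^2=\norm{\tilde z_\ell-z_{\ell-1}}^2-\norm{r_\ell}^2$ with $r_\ell:=\lambda_\ell v_\ell+\tilde z_\ell-z_{\ell-1}$, after which \eqref{eq:hpe} absorbs $\norm{r_\ell}^2+2\lambda_\ell\varepsilon_\ell$. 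From the resulting facts $\norm{z_k-z^*}\leq d_0$ and $\sum_{\ell=1}^k\norm{\tilde z_\ell-z_{\ell-1}}^2\leq d_0^2/(1-\sigma^2)$, your proof of part (a) (minimal-step index selection, $\lambda_\ell\norm{v_\ell}\leq(1+\sigma)\norm{\tilde z_\ell-z_{\ell-1}}$, $2\lambda_\ell\varepsilon_\ell\leq\sigma^2\norm{\tilde z_\ell-z_{\ell-1}}^2$, division by $\lambda_\ell\geq\underline{\lambda}$) is complete and yields exactly the stated constants, and in part (b) the inclusion via Theorem \ref{th:tf} and the bound $\norm{v_k^a}=\norm{z_0-z_k}/\Lambda_k\leq 2d_0/(\underline{\lambda}k)$ are likewise fully established.

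The genuine gap is the last inequality of part (b), the bound $\varepsilon_k^a\leq 2(1+\sigma/\sqrt{1-\sigma^2})d_0^2/(\underline{\lambda}k)$. Your final paragraph describes the right objects (the covariance identity, the telescoping of $\inner{\tilde z_\ell-z^*}{z_{\ell-1}-z_\ell}$, the leftover term $\inner{\tilde z_k^a-z^*}{z_0-z_k}$) but then concedes that the crude estimate overshoots the stated constant and concludes that the sharp version ``is exactly the bookkeeping carried out in \cite{mon.sva-hpe.siam10}'' --- that is a citation, not a proof, and since this is precisely the bound that drives the $\lceil d_{0,V}^{\,2}/\epsilon\rceil$ term in the paper's Theorems \ref{pr:rate.spin2} and \ref{th:ic.main}, it cannot be left open. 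The clean way to close it is to note that your contraction computation, run with an arbitrary anchor $y\in\HH$ in place of $z^*$ and without invoking monotonicity, gives the inequality
\begin{align*}
\sum_{\ell=1}^k\lambda_\ell\left(\varepsilon_\ell+\inner{v_\ell}{\tilde z_\ell-y}\right)\leq\tfrac12\left(\norm{z_0-y}^2-\norm{z_k-y}^2\right)\qquad\forall\, y\in\HH,
\end{align*}
which with $y=\tilde z_k^a$ equals $\Lambda_k\varepsilon_k^a$ on the left (since $\sum_\ell\lambda_\ell(\tilde z_\ell-\tilde z_k^a)=0$); the remaining work --- and the part you have not done --- is to estimate the right-hand side using $\norm{\tilde z_\ell-z_\ell}=\norm{r_\ell}\leq\sigma\norm{\tilde z_\ell-z_{\ell-1}}\leq\sigma d_0/\sqrt{1-\sigma^2}$ and $\norm{z_0-z_k}\leq 2d_0$ carefully enough to land on $2d_0^2(1+\sigma/\sqrt{1-\sigma^2})$ rather than a larger constant. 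Until that computation is actually carried out, the ergodic $\varepsilon_k^a$ bound remains asserted rather than proven.
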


\noindent
\begin{remark}
The bounds given in (a) and (b) of Theorem \ref{lm:rhpe2} are
called pointwise and ergodic bounds, respectively. Items (a) and (b) can be used, respectively,
to prove that for given tolerances $\rho,\epsilon>0$ 
the termination criterion \eqref{eq:tc} is satisfied 
in at most
 \begin{align*}
 \mathcal{O}\left(\max\left\{\left\lceil\dfrac{d_0^2}{\underline{\lambda}^2\rho^2}\right\rceil,
 \left\lceil\dfrac{d_0^2}{\underline{\lambda}\epsilon}\right\rceil\right\}\right)
 \;\;\mbox{and}\;\;
\mathcal{O}\left(\max\left\{\left\lceil\dfrac{d_0}{\underline{\lambda}\rho}\right\rceil,\left\lceil\dfrac{d_0^2}{\underline{\lambda}\epsilon}
\right\rceil
\right\}\right)
 \end{align*}
iterations, respectively.
\end{remark}



The following variant of Algorithm \ref{hpe} studied
in \cite{bur.sag.sch-ine.oms06} is related to the results of this paper:
Let $z_0\in \HH$ and $\hat \sigma\in [0,1[$ be given and iterate for $k\geq 1$,

\begin{align}
\left\{
       \begin{array}{ll}
			 \label{eq:v.hpe}
			  \vspace{0.3cm}
				 v_k\in T^{\varepsilon_k}(\tilde z_k),\quad  \norm{\lambda_k v_k+\tilde z_k-z_{k-1}}^2+
 2\lambda_k\varepsilon_k \leq \hat \sigma^2\left(\norm{\tilde z_k-z_{k-1}}^2+\norm{\lambda_kv_k}^2\right),\\
z_k=z_{k-1}-\lambda_k v_k.\\
        \end{array}
        \right.
\end{align}


%
%
 %
%
%
%

\begin{remark} The inequality in \eqref{eq:v.hpe} is a relative error tolerance proposed in \cite{Sol-Sv:hy.unif}
 (for a different method); the identity in \eqref{eq:v.hpe} is the same extragradient step 
of Algorithm \ref{hpe}. Hence, the method described in \eqref{eq:v.hpe}
can be interpreted as a HPE variant in which a different relative error tolerance is
considered in the solution of each subproblem. In what follows in this section we will show that 
\eqref{eq:v.hpe} is actually a special instance of Algorithm \ref{hpe} whenever $\hat\sigma\in [0,1/\sqrt{5}\,[$ and that
it may fail to converge if we take $\hat\sigma>1/\sqrt{5}$.
\end{remark}

\begin{lemma}\emph{(\cite[Lemma 2]{Sol-Sv:hy.unif})}
\label{lm:ineq.2}
 Suppose $\{z_k\}$, $\{\tilde z_k\}$, $\{v_k\}$ and $\{\lambda_k\}$
 satisfy the inequality in \eqref{eq:v.hpe}. 
Then, for every $k\geq 1$,
\begin{align*}
  \dfrac{1-\theta}{1-\hat \sigma^2}\norm{\tilde z_k-z_{k-1}}\leq \norm{\lambda_k v_k}\leq
	\dfrac{1+\theta}{1-\hat \sigma^2}\norm{\tilde z_k-z_{k-1}}
\end{align*}
where
\begin{align}
 \label{eq:lm:ineq.20}
 \theta:=\sqrt{1-(1- \hat \sigma^2)^2}.
\end{align}
\end{lemma}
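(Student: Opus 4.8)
The plan is to reduce the whole statement to a single scalar quadratic inequality in the two quantities $\norm{\lambda_k v_k}$ and $\norm{\tilde z_k-z_{k-1}}$. To lighten the notation I would first write $u:=\lambda_k v_k$ and $w:=\tilde z_k-z_{k-1}$. Since $\lambda_k>0$ and $\varepsilon_k\geq 0$, the term $2\lambda_k\varepsilon_k$ on the left-hand side of the inequality in \eqref{eq:v.hpe} is nonnegative and can simply be discarded, leaving
\[
  \norm{u+w}^2\leq \hat\sigma^2\left(\norm{u}^2+\norm{w}^2\right).
\]
This is the only structural input needed; the error term $\varepsilon_k$ plays no further role.

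Next I would expand $\norm{u+w}^2=\norm{u}^2+2\inner{u}{w}+\norm{w}^2$ and rearrange the last display into
\[
  (1-\hat\sigma^2)\left(\norm{u}^2+\norm{w}^2\right)+2\inner{u}{w}\leq 0 .
\]
Applying the Cauchy--Schwarz inequality in the form $\inner{u}{w}\geq -\norm{u}\norm{w}$ then yields
\[
  (1-\hat\sigma^2)\left(\norm{u}^2+\norm{w}^2\right)\leq 2\norm{u}\norm{w}.
\]

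The final step is to read this off as a quadratic inequality in the scalar $a:=\norm{u}$ with $b:=\norm{w}$ as a parameter and $c:=1-\hat\sigma^2$, namely $c\,a^2-2\,b\,a+c\,b^2\leq 0$. Because $\hat\sigma\in[0,1[$ we have $c\in\,]0,1]$, so this parabola opens upward and its discriminant $4b^2-4c^2b^2=4b^2(1-c^2)$ is nonnegative; note $1-c^2=1-(1-\hat\sigma^2)^2=\theta^2$ by the definition \eqref{eq:lm:ineq.20}. Solving for the two roots gives $a=\frac{b(1\pm\theta)}{c}$, and the inequality forces $a$ to lie between them, i.e.
\[
  \frac{1-\theta}{1-\hat\sigma^2}\,\norm{w}\leq \norm{u}\leq \frac{1+\theta}{1-\hat\sigma^2}\,\norm{w}.
\]
Substituting back $u=\lambda_k v_k$ and $w=\tilde z_k-z_{k-1}$ produces the claimed bounds. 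I do not anticipate any genuine obstacle here: the argument is elementary and the only point requiring care is the sign of the discriminant, which is exactly where the hypothesis $\hat\sigma<1$ enters to guarantee that $\theta$ is real and the two-sided estimate is meaningful.
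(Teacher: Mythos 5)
Your proof is correct and follows essentially the same route as the paper: drop the nonnegative term $2\lambda_k\varepsilon_k$, apply Cauchy--Schwarz to obtain the quadratic inequality $(1-\hat\sigma^2)\norm{\lambda_k v_k}^2-2\norm{\tilde z_k-z_{k-1}}\norm{\lambda_k v_k}+(1-\hat\sigma^2)\norm{\tilde z_k-z_{k-1}}^2\leq 0$, and then locate $\norm{\lambda_k v_k}$ between the two roots. You merely spell out the root computation that the paper leaves implicit.
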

\begin{proof}
 From the inequality in \eqref{eq:v.hpe} and the Cauchy-Schwarz inequality we obtain
 \begin{equation*}
  \left(1- \hat \sigma^2\right)\norm{\lambda_k v_k}^2-2\norm{\tilde z_k-z_{k-1}}\norm{\lambda_k v_k}
	+\left(1-\hat \sigma^2\right)\norm{\tilde z_k-z_{k-1}}^2\leq 0,\qquad \forall k\geq 1.
\end{equation*}
 To finish the proof of the lemma note that (in the above inequality) 
we have a quadratic function in the term $\norm{\lambda_k v_k}$.
\qed
\end{proof}

\begin{proposition}
 \label{pr:v.hpe}
 Let $\{z_k\}$, $\{\tilde z_k\}$, $\{v_k\}$, $\{\varepsilon_k\}$ and $\{\lambda_k\}$
 be given in \eqref{eq:v.hpe} and 
 assume that $\hat\sigma\in [0,1/\sqrt{5}\,[$.
 Define, for all $k\geq 1$,
\begin{align}
 \label{eq:pr:v.hpe0}
  \sigma:=\hat \sigma\sqrt{1+\left(\dfrac{1+\theta}{1-\hat\sigma^2}\right)^2},
\end{align}
where $0\leq \theta<1$ is given in \eqref{eq:lm:ineq.20}. Then, $\sigma\geq 0$ belongs to $[0,1[$ and $z_k, \tilde z_k, v_k, \varepsilon_k$
and $\lambda_k>0$ satisfy \eqref{eq:hpe} and \eqref{eq:hpe2} for all $k\geq 1$. As a consequence, the method of 
\emph{\cite{bur.sag.sch-ine.oms06}} defined in 
\eqref{eq:v.hpe} is a special instance of \emph{Algorithm \ref{hpe}} whenever $\hat\sigma\in [0,1/\sqrt{5}\,[$.
\end{proposition}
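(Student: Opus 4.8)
The plan is to check that, for the value of $\sigma$ defined in \eqref{eq:pr:v.hpe0}, every quadruple $(\tilde z_k, v_k, \varepsilon_k, \lambda_k)$ generated by \eqref{eq:v.hpe} also satisfies \eqref{eq:hpe}--\eqref{eq:hpe2}, and separately that this $\sigma$ lies in $[0,1[$. The first observation that simplifies matters is that \eqref{eq:v.hpe} and Algorithm \ref{hpe} already agree on two of their three ingredients: the inclusion $v_k\in T^{\varepsilon_k}(\tilde z_k)$ is literally the same, and the update $z_k = z_{k-1}-\lambda_k v_k$ is exactly \eqref{eq:hpe2}. Thus the entire task reduces to deducing the error inequality of \eqref{eq:hpe} from the (a priori weaker-looking, because of the extra $\norm{\lambda_k v_k}^2$ term) error inequality of \eqref{eq:v.hpe}.

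For this step I would invoke Lemma \ref{lm:ineq.2}, whose upper estimate yields $\norm{\lambda_k v_k}^2 \le \big(\tfrac{1+\theta}{1-\hat\sigma^2}\big)^2\norm{\tilde z_k - z_{k-1}}^2$. Substituting this into the right-hand side of the inequality in \eqref{eq:v.hpe} gives
\[
\hat\sigma^2\big(\norm{\tilde z_k - z_{k-1}}^2 + \norm{\lambda_k v_k}^2\big)\le \hat\sigma^2\Big(1+\big(\tfrac{1+\theta}{1-\hat\sigma^2}\big)^2\Big)\norm{\tilde z_k - z_{k-1}}^2 = \sigma^2\norm{\tilde z_k - z_{k-1}}^2,
\]
the last equality being precisely the definition \eqref{eq:pr:v.hpe0} of $\sigma$. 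Combining this with \eqref{eq:v.hpe} produces the inequality in \eqref{eq:hpe}, so \eqref{eq:v.hpe} is indeed an instance of Algorithm \ref{hpe} with this $\sigma$.

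The remaining, genuinely delicate point is to prove $\sigma\in[0,1[$, and this is where the threshold $1/\sqrt5$ must appear. Writing $s:=\hat\sigma^2\in[0,1/5)$ and using $\theta^2 = 1-(1-\hat\sigma^2)^2 = s(2-s)$, I would first simplify the expression for $\sigma^2$ to the compact form $\sigma^2 = \tfrac{2s(1+\theta)}{(1-s)^2}$. Then $\sigma^2<1$ is equivalent to $2s\theta < 1-4s+s^2$; since the right-hand side is positive on $[0,1/5)$, one may square both sides and substitute $\theta^2 = s(2-s)$ to reduce the claim to the polynomial inequality $0 < 5s^4 - 16s^3 + 18s^2 - 8s + 1$.

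The crux is to recognize the factorization $5s^4 - 16s^3 + 18s^2 - 8s + 1 = (1-5s)(1-s)^3$, which is strictly positive exactly on $[0,1/5)$ and vanishes at $s=1/5$. This single identity is the heart of the argument: it delivers $\sigma<1$ under the hypothesis $\hat\sigma<1/\sqrt5$ and simultaneously exposes $1/\sqrt5$ as the sharp threshold, consistent with the preceding remark that \eqref{eq:v.hpe} may fail to converge for $\hat\sigma>1/\sqrt5$. The main obstacle is therefore purely computational---carrying the algebra through to the quartic and spotting its clean factorization---rather than conceptual.
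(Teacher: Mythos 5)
Your proposal is correct and follows essentially the same route as the paper: the inclusion and the extragradient update are identical in both schemes, and the inequality in \eqref{eq:hpe} is obtained by bounding $\norm{\lambda_k v_k}^2$ via the second inequality of Lemma \ref{lm:ineq.2} and recognizing the resulting constant as $\sigma^2$ from \eqref{eq:pr:v.hpe0}. The only difference is that the paper dismisses the verification of $\sigma\in[0,1[$ as ``simple calculations,'' whereas you carry it out explicitly; your algebra checks out (indeed $(1-s)^2+(1+\theta)^2=2(1+\theta)$ gives $\sigma^2=2s(1+\theta)/(1-s)^2$ with $s=\hat\sigma^2$, and the quartic does factor as $(1-5s)(1-s)^3$), which has the added merit of exhibiting $1/\sqrt{5}$ as the exact threshold.
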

\begin{proof}
 The assumption $\hat\sigma\in [0,1/\sqrt{5}\,[$, definition \eqref{eq:pr:v.hpe0}
 and some simple calculations show that $\sigma\in [0,1[$. It follows from
\eqref{eq:v.hpe}, \eqref{eq:hpe} and \eqref{eq:hpe2}
that to finish the proof of the proposition it suffices to 
prove the inequality in \eqref{eq:hpe}. To this end, note
that from the second inequality in Lemma \ref{lm:ineq.2} and \eqref{eq:pr:v.hpe0} we have
\begin{align*}
 \hat\sigma^2\left(\norm{\tilde z_k-z_{k-1}}^2+\norm{\lambda_k v_k}^2\right)&\leq
 \hat\sigma^2\left(1+\left(\dfrac{1+\theta}{1-\hat\sigma^2}\right)^2\right)\norm{\tilde z_k-z_{k-1}}^2\\
  &=\sigma^2\norm{\tilde z_k-z_{k-1}}^2\quad \forall k\geq 1,
\end{align*}
which in turn gives that the inequality in \eqref{eq:hpe} follows from the one in \eqref{eq:v.hpe}. 
\qed
\end{proof}

\begin{remark}
 \label{rm:um.not.dois}
Algorithm \ref{hpe} is obviously a
special instance of \eqref{eq:v.hpe} whenever $\sigma\in [0,1/\sqrt{5}\,[$
by setting $\hat\sigma:=\sigma$. 
Next we will show it is not true in general. Let $T:\R\to \R$ be the maximal monotone operator
defined by
\begin{align}
\label{eq:def.Tr2}
 T(z):=z\quad \forall z\in \R.
\end{align}
Assume that $\sigma\in ]\sqrt{2/5},1[$, take $z_0=1$ and define, for all $k\geq 1$,
\begin{align}
 \label{eq:def.algs}
  \tilde z_k:=z_k:=\left(1-\sigma^2\right)z_{k-1},\quad v_k:=z_{k-1},\quad \varepsilon_k:=\frac{\sigma^4}{2} |z_{k-1}|^2,
	\quad \lambda_k:=\sigma^2.
\end{align}  
We will show that $(\tilde z_k, v_k,\varepsilon_k)$ and $\lambda_k>0$ in \eqref{eq:def.algs} 
satisfy \eqref{eq:hpe} but not \eqref{eq:v.hpe} for any choice of $\hat\sigma\in [0,1/\sqrt{5}\,[$.
To this end, we first claim that $v_k\in T^{\varepsilon_k}(\tilde z_k)$ for all $k\geq 1$.
Indeed, using \eqref{eq:def.Tr2} and \eqref{eq:def.algs} we obtain, for all $y\in \R$ and $k\geq 1$, 
\begin{align*}
 (Ty-v_k)(y-\tilde z_k)&=(y-z_{k-1})(y-z_{k-1}+\sigma^2 z_{k-1})\\
  &\geq |y-z_{k-1}|^2-|\sigma^2 z_{k-1}||y-z_{k-1}| \\
	&\geq -\dfrac{|\sigma^2 z_{k-1}|^2}{4}>-\varepsilon_k,
\end{align*}
which combined with \eqref{eq:def.teps} proves our claim. Moreover, it follows from \eqref{eq:def.algs}
that
\begin{align}
 \nonumber
 |\lambda_k v_k+\tilde z_k-z_{k-1}|^2+2\lambda_k\varepsilon_k&=|\tilde z_k-(1-\sigma^2)z_{k-1}|^2+2\lambda_k\varepsilon_k\\
 \nonumber 
&=2\lambda_k\varepsilon_k\\
 \label{eq:401}
 &=\sigma^2|\tilde z_k-z_{k-1}|^2,
\end{align}
which proves that $(\tilde z_k,v_k,\varepsilon_k)$
and $\lambda_k>0$ satisfy the inequality in \eqref{eq:hpe}. The first and second identities in \eqref{eq:def.algs}
give that they also satisfy \eqref{eq:hpe2}.
Altogether, we have that the iteration defined in \eqref{eq:def.algs}
is generated by Algorithm \ref{hpe} for solving \eqref{eq:cp} with
$T$ given in \eqref{eq:def.Tr2}.
On the other hand, it follows from \eqref{eq:def.algs}
and the assumption $\sigma>\sqrt{2/5}$ that
\begin{align}
 \nonumber
 \sigma^2|\tilde z_k-z_{k-1}|^2&=\dfrac{\sigma^2}{2}\left(|\tilde z_k-z_{k-1}|^2+|\lambda_k v_k|^2\right)\\
  \label{eq:400}
  &>\dfrac{1}{5}\left(|\tilde z_k-z_{k-1}|^2+|\lambda_k v_k|^2\right).
\end{align}
Hence, it follows from \eqref{eq:401} and \eqref{eq:400} that the inequality in \eqref{eq:v.hpe}
can not be satisfied for any choice of $\hat\sigma\in [0,1/\sqrt{5}\,[$ and so the sequence given is 
\eqref{eq:def.algs} is generated by Algorithm \ref{hpe} but it is not generated by 
the algorithm described in \eqref{eq:v.hpe}.
\end{remark}


\begin{remark}
\label{rm:notconv}
Next we present an example of a monotone inclusion problem
for which an instance of \eqref{eq:v.hpe} may fail to converge if we take
$\hat \sigma \in ]1/\sqrt{5},1[$. To this end, consider problem \eqref{eq:cp}
where the maximal monotone operator $T:\R\to \R$ is defined by
\begin{align}
 \label{eq:def.Tr}
  T(z):=\alpha z \qquad \forall z\in \R,
\end{align}
where 
\begin{align}
  \label{eq:sl}
  \alpha:=\dfrac{2\gamma}{\gamma-2}+1,\quad  \gamma:=\dfrac{1+\theta}{1-\hat\sigma^2}.
\end{align}
($\theta>0$ is defined in \eqref{eq:lm:ineq.20}.) Assuming $\hat \sigma \in ]1/\sqrt{5},1[$
we obtain $5\hat\sigma^4-6\hat\sigma^2+1<0$, which is clearly
equivalent to $\theta>|1-2\hat\sigma^2|$. Using \eqref{eq:sl} and the latter inequality
we conclude that
\begin{align}
 \label{eq:sl4}
 \gamma>2,\quad \dfrac{\alpha\gamma}{\alpha+\gamma}>2.
\end{align}
Now take $z_0=1$ and define, for all $k\geq 1$,
\begin{align}
 \label{eq:sl2}
  (\tilde z_k,v_k,\varepsilon_k):=\left(\dfrac{\gamma}{\alpha+\gamma}z_{k-1},T(\tilde z_k),0\right),\quad \lambda_k:=1,\quad
  z_k:=z_{k-1}-\lambda_k v_k.
\end{align}
Direct calculation yields, for all $k\geq 1$,
\begin{align}
 |v_k+\tilde z_k-z_{k-1}|^2=\hat\sigma^2\left(|\tilde z_k-z_{k-1}|^2+|v_k|^2\right),
\end{align}
which, in turn, together with \eqref{eq:sl2} imply
\eqref{eq:v.hpe}. Using \eqref{eq:def.Tr} and \eqref{eq:sl2} we find
\begin{align}
 z_k=\left(1-\dfrac{\alpha\gamma}{\alpha+\gamma}\right)^k,\quad \forall k\geq 1.
\end{align}
Using the second inequality in \eqref{eq:sl4} and the latter identity
we easily conclude that $|z_k|\to \infty$ as $k\to \infty$
and so $\{z_k\}$ does not converge to the unique solution 
$\bar z=0$ of \eqref{eq:cp}. 
%
\end{remark}

\section{An Inexact Spingarn's Partial Inverse Method}
\label{sec:pi}
In this section we consider the problem of finding $x,u\in \HH$ such that
\begin{align}
  \label{eq:inc.v}
  x\in V,\quad  u\in V^\perp\;\;\mbox{and}\;\;u\in T(x) 
\end{align}
where $T:\HH\tos \HH$ is maximal monotone and $V$ is a closed subspace of $\HH$.
We define the solution set of \eqref{eq:inc.v} by
 \begin{equation}
   \label{def:sol.v}
  \mathcal{S}^{*}(V,T):=\{z\in \HH\;:\;\mbox{there exist}\; x,u\in \HH\;
	 \mbox{satisfying (\ref{eq:inc.v}) such that}\; z=x+u\}
\end{equation} 
and assume it is nonempty. Problem \eqref{eq:inc.v} encompasses important problems in applied mathematics including minimization of convex functions over closed
subspaces, splitting methods for the sum of finitely many maximal monotone operators
and decomposition methods in convex optimization~\cite{spi-par.amo83,bur.sag.sch-ine.oms06,com-pri.ol14,ouo-eps.mp04}.
One of our main goals in this paper is to propose and analyze an inexact partial inverse method  for solving \eqref{eq:inc.v}
in the light of recent developments in the iteration-complexity theory of
the HPE method~\cite{sol.sva-hyb.svva99,mon.sva-hpe.siam10}, as discussed in Section \ref{sec:pp}. 
We will show, in particular, that
the method proposed in this section generalizes the inexact versions of the Spingarn's partial inverse method
for solving \eqref{eq:inc.v} proposed in \cite{ouo-eps.mp04} and \cite{bur.sag.sch-ine.oms06}. The main results
of iteration-complexity to find approximate solutions are achieved
by analyzing the proposed method in the framework of the HPE method (Algorithm \ref{hpe}).

Regarding the results of iteration-complexity, we will consider the following notion of approximate solution for \eqref{eq:inc.v}: given
tolerances $\rho, \epsilon>0$, find $\bar x,\bar u\in \HH$ 
and $\bar \varepsilon>0$ such that $(x,u)=(\bar x,\bar u)$
and $\varepsilon=\bar \varepsilon$ satisfy
\begin{align}
 \label{eq:inc.v02}
  u\in T^{\varepsilon}(x),\quad \max\left\{\norm{x-P_V(x)}, \norm{u-P_{V^\perp}(u)}\right\}\leq \rho,
	\quad \varepsilon\leq \epsilon,
\end{align}
where $P_V$ and $P_{V^\perp}$ stand for the orthogonal projection onto
$V$ and $V^\perp$, respectively, and $T^\varepsilon(\cdot)$
denotes the $\varepsilon$-enlargement of $T$ (see Section \ref{sec:pp} for more details on notation).
For $\rho=\epsilon=0$, criterion \eqref{eq:inc.v02} gives
$\bar x\in V$, $\bar u\in V^{\perp}$ and $\bar u\in T(\bar x)$, i.e., in this case
$\bar x, \bar u$ satisfy \eqref{eq:inc.v}. Moreover, if $V=\HH$
in \eqref{eq:inc.v}, in which case $P_V=I$ and $P_{V^\perp}=0$, then
the criterion \eqref{eq:inc.v02} coincides with one discussed in Section \ref{sec:pp}
for problem \eqref{eq:cp} (see \eqref{eq:tc}).

\mgap

That said, we next present our inexact version of the Spingarn's partial inverse method
for solving \eqref{eq:inc.v}.

\mgap
\mgap

\noindent
\fbox{
\addtolength{\linewidth}{-2\fboxsep}%
\addtolength{\linewidth}{-2\fboxrule}%
\begin{minipage}{\linewidth}
\begin{algorithm}
\label{spin}
{\bf An inexact Spingarn's partial inverse method for \bf{(\ref{eq:inc.v})} (I)}
\end{algorithm}
\begin{itemize}
\item[(0)] Let $x_0\in \HH$ and $\sigma\in [0,1[$ be given and set $k=1$.
\item [(1)] Compute $(\tilde x_k,u_k,\varepsilon_k)\in \HH\times \HH\times \R_+$ such that
\begin{align}
\label{eq:spin}
 \begin{aligned}
  u_k\in T^{\varepsilon_k}(\tilde x_k),\quad \norm{u_k+\tilde x_k-x_{k-1}}^2+2\varepsilon_k\leq \sigma^2 
  \norm{P_V(\tilde x_k)+P_{V^\perp}(u_k)-x_{k-1}}^2.
\end{aligned}
\end{align} 
\item[(2)] Define 
 \begin{align}
  \label{eq:spin2}
     x_k=x_{k-1}-\left[P_V(u_k)+P_{V^\perp}(\tilde x_k)\right],
   \end{align}
	set $k\leftarrow k+1$ and go to step 1.
   \end{itemize}
\noindent
\end{minipage}
} 
\mgap
\mgap

\noindent
\emph{Remarks.} 1) Letting $V=\HH$ in \eqref{eq:inc.v}, in which case $P_V=I$ and $P_{V^\perp}=0$,
we obtain that Algorithm \ref{spin} coincides with Algorithm \ref{hpe} with $\lambda_k=1$ for all $k\geq 1$ for solving
\eqref{eq:cp} (or, equivalently, \eqref{eq:inc.v} with $V=\HH$). 
2) An inexact partial inverse method called \emph{sPIM($\varepsilon$)} was proposed 
in \cite{bur.sag.sch-ine.oms06}, Section 4.2, for solving \eqref{eq:inc.v}. The latter
method, with a different notation and scaling factor $\eta=1$,
is given according to the iteration:
\begin{align}
\label{eq:66}
\left\{
       \begin{array}{ll}
			  \vspace{0.3cm}
				 u_k\in T^{\varepsilon_k}(\tilde x_k),\;\;
\norm{u_k+\tilde x_k-x_{k-1}}^2+2\varepsilon_k\leq 
 \hat \sigma^2\left(\norm{\tilde x_k-P_V(x_{k-1})}^2+\norm{u_k-P_{V^\perp}(x_{k-1})}^2\right),\\
x_k=x_{k-1}-\left[P_V(u_k)+P_{V^\perp}(\tilde x_k)\right],\\
        \end{array}
        \right.
\end{align}
where $\hat\sigma\in [0,1[$. 
The convergence analysis given in \cite{bur.sag.sch-ine.oms06} for the iteration
\eqref{eq:66} relies on the fact (proved in the latter reference)
that \eqref{eq:66} is a special instance of \eqref{eq:v.hpe} (which 
we observed in Remark \ref{rm:notconv} may fail to converge if
we consider $\hat\sigma\in ]1/\sqrt{5},1[$\,). 
Using the fact just mentioned, the last statement in Proposition \ref{pr:v.hpe}
and Proposition \ref{pr:pi.e.hpe} we conclude that \eqref{eq:66}
is a special instance of Algorithm \ref{spin} whenever $\hat\sigma \in [0,1/\sqrt{5}[$
and it may fail to converge if $\hat\sigma>1/\sqrt{5}$.
On the other hand, since, due to Proposition \ref{pr:pi.e.hpe}, Algorithm \ref{spin}
is a special instance of Algorithm \ref{hpe},
it converges for all $\sigma\in [0,1[$ (see, e.g., \cite[Theorem 3.1]{sol.sva-hyb.svva99}). 
Note that the difference between sPIM($\varepsilon$)
and Algorithm \ref{spin} is the inequality in \eqref{eq:spin} and \eqref{eq:66}.

In what follows we will prove iteration-complexity 
results for Algorithm \ref{spin} to obtain approximate solutions
of \eqref{eq:inc.v}, according to \eqref{eq:inc.v02}, as a consequence
of the iteration-complexity results from Theorem \ref{lm:rhpe2}.
To this end, first let $\{\tilde x_k\}$, $\{u_k\}$ and $\{\varepsilon_k\}$ be generated by Algorithm \ref{spin}
and define the \emph{ergodic} sequences associated to them:
\begin{align}
  \label{eq:def.erg2}
 \begin{aligned}
 &\tilde x_k^a:=\dfrac{1}{k}\sum_{\ell=1}^k \tilde x_\ell\,,\quad  u_k^a:=\dfrac{1}{k}\sum_{\ell=1}^k u_\ell\,,\\ 
 &\varepsilon_k^{\,a}:=\dfrac{1}{k}\sum_{\ell=1}^k\,
\big[\varepsilon_\ell+\inner{\tilde x_\ell-\tilde x_k^a}{u_\ell-u_k^a}\big].
\end{aligned}
\end{align} 

The proof of the next Proposition is given in Subsection \ref{subsec:pp}.

\begin{theorem}
 \label{pr:rate.spin2}
 Let $\{\tilde x_k\}$, $\{u_k\}$ and $\{\varepsilon_k\}$ be generated by \emph{Algorithm \ref{spin}} and let
 $\{\tilde x_k^a\}$, $\{u_k^a\}$ and $\{\varepsilon_k^{\,a}\}$ be defined in \eqref{eq:def.erg2} . Let also $d_{0,V}$ 
denote the distance of $x_0$ to the solution set \eqref{def:sol.v}.  
 The following statements hold:
\begin{enumerate}
  \item[\emph{(a)}] For any $k\geq 1$,
	  there exists $j\in \{1,\dots,k\}$ 
    such that  
		\begin{align}
		  \label{eq:702}
			\begin{aligned}
      & u_j\in T^{\varepsilon_j}(\tilde x_j),\\  
			&\sqrt{\norm{\tilde x_j-P_{V}(\tilde x_j)}^2+\norm{u_j-P_{V^\perp}(u_j)}^2}\leq \dfrac{d_{0,V}}{\sqrt{k}}
			\sqrt{\dfrac{1+\sigma}{1-\sigma}},\quad  
			\varepsilon_j\leq \dfrac{\sigma^2 d_{0,V}^{\,2}}{2(1-\sigma^2) k}\;;
     \end{aligned}
		\end{align}
	 \item[\emph{(b)}]
	  for any $k\geq 1$,
		 \begin{align}
		  \label{eq:703}
			 \begin{aligned}
      &u_k^a\in T^{\varepsilon_k^{\,a}}(\tilde x_k^a),\\
      &\sqrt{\norm{\tilde x_k^a-P_{V}(\tilde x_k^a)}^2+\norm{u_k^a-P_{V^\perp}(u_k^a)}^2}\leq \dfrac{2d_{0,V}}{k},\quad
      0\leq \varepsilon_k^{\,a}\leq \dfrac{2(1+\sigma/\sqrt{1-\sigma^2})d_{0,V}^{\,2}}{k}\,.
      \end{aligned}
		 \end{align}
	\end{enumerate}
\end{theorem}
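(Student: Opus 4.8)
The plan is to realize Algorithm \ref{spin} as the HPE method (Algorithm \ref{hpe}) with constant stepsize $\lambda_k\equiv 1$ applied to the partial inverse inclusion $0\in T_V(z)$, and then simply to read off the bounds of Theorem \ref{lm:rhpe2} after translating them back to the variables $\tilde x_k,u_k$. First I would record that, by the definition \eqref{eq:def.ig} of $T_V$, a point $z=x+u$ with $x\in V$, $u\in V^\perp$ solves \eqref{eq:inc.v} if and only if $0\in T_V(z)$; hence the solution set \eqref{def:sol.v} equals $(T_V)^{-1}(0)$, the distance $d_{0,V}$ of $x_0$ to it plays the role of $d_0$ in Theorem \ref{lm:rhpe2}, and $\underline{\lambda}=1$.

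The heart of the argument (the content of Proposition \ref{pr:pi.e.hpe}) is the change of variables
\[
\tilde z_k:=P_V(\tilde x_k)+P_{V^\perp}(u_k),\qquad v_k:=P_V(u_k)+P_{V^\perp}(\tilde x_k),
\]
together with the identification $z_k=x_k$. I would then check three things. By Lemma \ref{lm:bss}, $v_k\in (T_V)^{\varepsilon_k}(\tilde z_k)$ is equivalent to $u_k\in T^{\varepsilon_k}(\tilde x_k)$, since the partial inverse of $T^{\varepsilon_k}$ swaps the $V$- and $V^\perp$-components exactly as above. Next, since $P_V$ and $P_{V^\perp}$ are complementary orthogonal projections one has $v_k+\tilde z_k=u_k+\tilde x_k$, so with $z_{k-1}=x_{k-1}$ the HPE error inequality in \eqref{eq:hpe} (with $\lambda_k=1$) becomes verbatim the inequality in \eqref{eq:spin}. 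Finally, $P_V(u_k)+P_{V^\perp}(\tilde x_k)=v_k$, so the extragradient step \eqref{eq:hpe2} is exactly \eqref{eq:spin2}; thus $\{z_k\}$ and $\{x_k\}$ coincide and Algorithm \ref{spin} is an instance of Algorithm \ref{hpe}.

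With the equivalence in hand, item (a) follows by applying Theorem \ref{lm:rhpe2}(a) to $T_V$ and rewriting the pointwise bound. The orthogonal decompositions of $\tilde x_j$ and $u_j$ give
\[
\norm{v_j}^2=\norm{P_{V^\perp}(\tilde x_j)}^2+\norm{P_V(u_j)}^2=\norm{\tilde x_j-P_V(\tilde x_j)}^2+\norm{u_j-P_{V^\perp}(u_j)}^2,
\]
which is precisely the quantity bounded in \eqref{eq:702}; substituting $d_0=d_{0,V}$ and $\underline{\lambda}=1$ into Theorem \ref{lm:rhpe2}(a) yields the stated constants, while $u_j\in T^{\varepsilon_j}(\tilde x_j)$ is the translated inclusion.

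For item (b) I would verify that the ergodic sequences transform under the same linear change of variables. Because the projections are linear and $\lambda_\ell\equiv 1$ (so $\Lambda_k=k$), averaging gives $v_k^a=P_V(u_k^a)+P_{V^\perp}(\tilde x_k^a)$ and $\tilde z_k^a=P_V(\tilde x_k^a)+P_{V^\perp}(u_k^a)$, i.e.\ the HPE ergodic iterates of \eqref{eq:d.eg} are the images of the Spingarn ergodic iterates of \eqref{eq:def.erg2}, and $\norm{v_k^a}$ is rewritten exactly as above. The one identity worth checking carefully — and the step I expect to be the main obstacle — is that the two definitions of $\varepsilon_k^a$ agree: writing $p_\ell=\tilde x_\ell-\tilde x_k^a$ and $q_\ell=u_\ell-u_k^a$, the swap structure gives $\tilde z_\ell-\tilde z_k^a=P_V(p_\ell)+P_{V^\perp}(q_\ell)$ and $v_\ell-v_k^a=P_V(q_\ell)+P_{V^\perp}(p_\ell)$, whence orthogonality of $V$ and $V^\perp$ yields $\inner{\tilde z_\ell-\tilde z_k^a}{v_\ell-v_k^a}=\inner{P_V(p_\ell)}{P_V(q_\ell)}+\inner{P_{V^\perp}(p_\ell)}{P_{V^\perp}(q_\ell)}=\inner{p_\ell}{q_\ell}$. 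Hence the cross terms in \eqref{eq:d.eg} and \eqref{eq:def.erg2} coincide termwise, the two $\varepsilon_k^a$ are equal (and nonnegative), and Theorem \ref{lm:rhpe2}(b) with $d_0=d_{0,V}$, $\underline{\lambda}=1$ delivers \eqref{eq:703}.
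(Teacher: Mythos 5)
Your proposal is correct and follows essentially the same route as the paper: it reproduces the change of variables of Proposition \ref{pr:pi.e.hpe} (identifying Algorithm \ref{spin} as the HPE method with $\lambda_k\equiv 1$ applied to $0\in T_V(z)$ via Lemma \ref{lm:bss} and $(T_V)^{-1}(0)=\mathcal{S}^*(V,T)$), then translates Theorem \ref{lm:rhpe2} back using the same orthogonal-decomposition identity for $\norm{v_k}$ and the same verification that the two ergodic $\varepsilon_k^a$'s coincide. No gaps.
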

 
Next result, which is a direct consequence of Theorem \ref{pr:rate.spin2}(b), gives the iteration-complexity of Algorithm \ref{spin}
to find $x,u\in \HH$ and $\varepsilon>0$ satisfying the termination criterion
\eqref{eq:inc.v02}.

\begin{theorem}\emph{(Iteration-complexity)}
 \label{th:ic.main}
  Let $d_{0,V}$ denote the distance of $x_0$ to the solution set
	\eqref{def:sol.v} and let $\rho,\epsilon>0$ be given torelances.
	Then, \emph{Algorithm \ref{spin}} finds $x,u\in \HH$ and $\varepsilon>0$
	satisfying the termination criterion \eqref{eq:inc.v02}
	in at most
	\begin{align}
	 \label{eq:ic.main02}
	  \mathcal{O}\left(\max\left\{\left\lceil\dfrac{d_{0,V}}{\rho}\right\rceil,
		\left\lceil\dfrac{d_{0,V}^{\,2}}{\epsilon}\right\rceil\right\}\right)
	\end{align}
	iterations.	
\end{theorem}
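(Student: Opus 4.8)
The plan is to obtain Theorem \ref{th:ic.main} as an immediate corollary of the ergodic bounds in Theorem \ref{pr:rate.spin2}(b), by taking the candidate approximate solution to be the ergodic triple $(x,u,\varepsilon):=(\tilde x_k^a, u_k^a, \varepsilon_k^{\,a})$. First I would observe that, with this choice, the inclusion $u\in T^{\varepsilon}(x)$ demanded in \eqref{eq:inc.v02} holds for every $k\geq 1$, since it is precisely the first relation in \eqref{eq:703}. Thus it remains only to convert the two quantitative requirements of \eqref{eq:inc.v02} into lower bounds on the iteration count $k$.

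Next I would handle the feasibility residual. The key elementary observation is that $\max\{a,b\}\leq \sqrt{a^2+b^2}$ for all $a,b\geq 0$, so that
\[
\max\left\{\norm{\tilde x_k^a-P_V(\tilde x_k^a)},\ \norm{u_k^a-P_{V^\perp}(u_k^a)}\right\}
\leq \sqrt{\norm{\tilde x_k^a-P_V(\tilde x_k^a)}^2+\norm{u_k^a-P_{V^\perp}(u_k^a)}^2}
\leq \frac{2d_{0,V}}{k},
\]
the last step being exactly the second bound in \eqref{eq:703}. Requiring the right-hand side to be at most $\rho$ gives the threshold $k\geq 2d_{0,V}/\rho$. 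In the same way, the third bound in \eqref{eq:703} guarantees $\varepsilon_k^{\,a}\leq \epsilon$ as soon as $k\geq 2\bigl(1+\sigma/\sqrt{1-\sigma^2}\bigr)d_{0,V}^{\,2}/\epsilon$.

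Finally I would combine the two thresholds: the termination criterion \eqref{eq:inc.v02} is met once $k$ exceeds both lower bounds, that is, for every
\[
k\geq \max\left\{\frac{2d_{0,V}}{\rho},\ \frac{2\bigl(1+\sigma/\sqrt{1-\sigma^2}\bigr)d_{0,V}^{\,2}}{\epsilon}\right\}.
\]
Since $\sigma\in[0,1[$ is fixed throughout, the factor $2\bigl(1+\sigma/\sqrt{1-\sigma^2}\bigr)$ is an absolute constant and can be absorbed into the $\mathcal{O}(\cdot)$ notation; passing to ceilings then yields exactly the bound \eqref{eq:ic.main02}.

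I do not expect any genuine obstacle here: the statement is essentially a reformulation of Theorem \ref{pr:rate.spin2}(b) in the language of the termination criterion \eqref{eq:inc.v02}. The only mildly nontrivial points are the passage from the Euclidean norm of the pair of residuals to their maximum via $\max\{a,b\}\leq\sqrt{a^2+b^2}$, and the bookkeeping that absorbs the $\sigma$-dependent constant into the order estimate. The substantive analytic work lies upstream, in Theorem \ref{pr:rate.spin2} itself, where Algorithm \ref{spin} is identified as a special instance of the HPE method (via Proposition \ref{pr:pi.e.hpe}) so that Theorem \ref{lm:rhpe2} can be applied.
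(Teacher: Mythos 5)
Your proposal is correct and follows exactly the route the paper intends: the paper states Theorem \ref{th:ic.main} as a direct consequence of the ergodic bounds in Theorem \ref{pr:rate.spin2}(b), and your argument — taking $(x,u,\varepsilon)=(\tilde x_k^a,u_k^a,\varepsilon_k^{\,a})$, using $\max\{a,b\}\leq\sqrt{a^2+b^2}$ on the residuals, and absorbing the fixed $\sigma$-dependent constant into the $\mathcal{O}(\cdot)$ — is precisely the omitted bookkeeping.
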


We now consider a special instance of Algorithm \ref{spin}
which will be used in Section \ref{sec:spin} to derive
operator splitting methods for solving the problem of
finding zeroes of a sum of finitely many maximal monotone operators.

\mgap
\mgap

\noindent
\fbox{
\addtolength{\linewidth}{-2\fboxsep}%
\addtolength{\linewidth}{-2\fboxrule}%
\begin{minipage}{\linewidth}
\begin{algorithm}
\label{spin2}
{\bf An inexact Spingarn's partial inverse method for \bf{(\ref{eq:inc.v})} (II) }
\end{algorithm}
\begin{itemize}
\item[(0)] Let $x_0\in \HH$ and $\sigma\in [0,1[$ be given and set $k=1$.
\item [(1)] Compute $\tilde x_k\in \HH$ and $\varepsilon_k\geq 0$ such that
\begin{align}
\label{eq:spin3}
 \begin{aligned}
  u_k:=x_{k-1}-\tilde x_k\in T^{\varepsilon_k}(\tilde x_k),\quad 
	\varepsilon_k\leq \dfrac{\sigma^2}{2} \norm{\tilde x_k-P_V(x_{k-1})}^2.
	\end{aligned}
\end{align} 
\item[(2)] Define 
 \begin{align}
  \label{eq:spin4}
     x_k= P_V(\tilde x_{k})+P_{V^\perp}(u_k),
   \end{align}
	set $k\leftarrow k+1$ and go to step 1.
   \end{itemize}
\noindent
\end{minipage}
} 
\mgap
\mgap

\noindent
\emph{Remarks.} 
1) Letting $\sigma=0$ in Algorithm \ref{spin2} and using Proposition \ref{pr:teps}(d)
we obtain from \eqref{eq:spin3} that $x=\tilde x_k$ solves the inclusion 
$0\in T(x)+x-x_{k-1}$, i.e., $\tilde x_k=(T+I)^{-1}x_{k-1}$ for all $k\geq 1$.  
In other words, if $\sigma=0$, then Algorithm \ref{spin2} is the Spingarn's partial inverse
method originally presented in \cite{spi-par.amo83}.
2) It follows from Proposition \ref{pr:teps}(e) that Algorithm \ref{spin2} is a generalization to the general setting of inclusions with
monotone operators of the \emph{Epsilon-proximal decomposition method scheme (EPDMS)}
proposed and studied in \cite{ouo-eps.mp04} for solving convex optimization problems.
Indeed, using the identity in \eqref{eq:spin3} we find that the right hand side of
the inequality in 
\eqref{eq:spin3} is equal to $\sigma^2/2\left(\norm{P_{V^\perp}(\tilde x_k)}^2+\norm{P_V(u_k)}^2\right)$
(cf. EPDMS method in \cite{ouo-eps.mp04}, with a different notation).
%
We also mention that no iteration-complexity analysis was performed in \cite{ouo-eps.mp04}.
%
3) Likewise, letting $V=\HH$ in Algorithm \ref{spin2} and using 
Proposition \ref{pr:teps}(e) we obtain that Algorithm \ref{spin2} generalizes the \emph{IPP-CO framework}
of \cite{mon.sva-icc.pre} (with $\lambda_k:=1$ for all $k\geq 1$),
for which iteration-complexity analysis was presented in the latter reference,
 to the more general setting
of inclusions problems with monotone operators.

\begin{proposition}
 \label{th:4.e.3}
The following statements hold true. 
\begin{itemize}
 \item[\emph{(a)}] \emph{Algorithm \ref{spin2}} is a special instance of \emph{Algorithm \ref{spin}}.
\item[\emph{(b)}] The conclusions of Theorem \ref{pr:rate.spin2}
and Theorem \ref{th:ic.main}  are still valid with \emph{Algorithm \ref{spin}} replaced by \emph{Algorithm \ref{spin2}}.
\end{itemize}
\end{proposition}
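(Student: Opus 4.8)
The plan is to prove part (a) directly, by exhibiting Algorithm \ref{spin2} as one particular run of Algorithm \ref{spin} corresponding to the choice $u_k := x_{k-1} - \tilde x_k$; part (b) will then be essentially free. First I would fix the data $(\tilde x_k, u_k, \varepsilon_k)$ produced by Algorithm \ref{spin2}. With $u_k = x_{k-1} - \tilde x_k$, the inclusion $u_k \in T^{\varepsilon_k}(\tilde x_k)$ demanded in \eqref{eq:spin} is literally the inclusion imposed in \eqref{eq:spin3}, while the vector $u_k + \tilde x_k - x_{k-1}$ appearing in \eqref{eq:spin} vanishes identically. Hence the error inequality in \eqref{eq:spin} collapses to $2\varepsilon_k \leq \sigma^2 \norm{P_V(\tilde x_k) + P_{V^\perp}(u_k) - x_{k-1}}^2$, and the whole of (a) reduces to identifying this right-hand side with the one in \eqref{eq:spin3} and checking that the two update rules coincide.

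The key step is a short orthogonality computation, which is really the only content of the proof. Writing $x_{k-1} = P_V(x_{k-1}) + P_{V^\perp}(x_{k-1})$ and using $P_{V^\perp}(u_k) = P_{V^\perp}(x_{k-1}) - P_{V^\perp}(\tilde x_k)$, I would rewrite
\[
 P_V(\tilde x_k) + P_{V^\perp}(u_k) - x_{k-1} = \bigl(P_V(\tilde x_k) - P_V(x_{k-1})\bigr) - P_{V^\perp}(\tilde x_k).
\]
Since the first summand lies in $V$ and the second in $V^\perp$, the Pythagorean identity gives $\norm{P_V(\tilde x_k)+P_{V^\perp}(u_k)-x_{k-1}}^2 = \norm{P_V(\tilde x_k)-P_V(x_{k-1})}^2 + \norm{P_{V^\perp}(\tilde x_k)}^2$. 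Applying the same orthogonal splitting to $\tilde x_k - P_V(x_{k-1}) = \bigl(P_V(\tilde x_k) - P_V(x_{k-1})\bigr) + P_{V^\perp}(\tilde x_k)$ shows the right-hand side equals $\norm{\tilde x_k - P_V(x_{k-1})}^2$. Thus the collapsed inequality is \emph{exactly} the criterion in \eqref{eq:spin3}, so \eqref{eq:spin} holds with equality of the two bounds and no slack is lost. A parallel, purely algebraic substitution of $u_k = x_{k-1} - \tilde x_k$ into the update \eqref{eq:spin2} turns its right-hand side into $P_{V^\perp}(x_{k-1}) + P_V(\tilde x_k) - P_{V^\perp}(\tilde x_k) = P_V(\tilde x_k) + P_{V^\perp}(u_k)$, which is precisely \eqref{eq:spin4}. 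This establishes (a).

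For (b), since by (a) every sequence $\{(\tilde x_k, u_k, \varepsilon_k)\}$ generated by Algorithm \ref{spin2} is also a sequence generated by Algorithm \ref{spin}, and since both the ergodic sequences in \eqref{eq:def.erg2} and the distance $d_{0,V}$ depend only on these data (and on the same solution set \eqref{def:sol.v}), Theorem \ref{pr:rate.spin2} and Theorem \ref{th:ic.main} apply verbatim. I do not anticipate a genuine obstacle: the entire argument rests on the single orthogonal-decomposition identity $\norm{P_V(\tilde x_k)+P_{V^\perp}(u_k)-x_{k-1}} = \norm{\tilde x_k - P_V(x_{k-1})}$, and the only point that deserves care is confirming that the two error criteria are literally identical (so that the hypotheses of Algorithm \ref{spin} are met, not merely approximately met) — which the computation above does.
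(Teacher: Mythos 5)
Your proof is correct and follows essentially the same route as the paper's: both exploit the identity $u_k+\tilde x_k-x_{k-1}=0$ to collapse the error criterion of Algorithm \ref{spin}, then use an orthogonal (Pythagorean) decomposition to show $\norm{P_V(\tilde x_k)+P_{V^\perp}(u_k)-x_{k-1}}=\norm{\tilde x_k-P_V(x_{k-1})}$ and a direct substitution to match the update rules, after which (b) is immediate.
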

\begin{proof}
(a) Let $\{x_k\}$, $\{\tilde x_k\}$, $\{\varepsilon_k\}$ and $\{u_k\}$ be generated by Algorithm \ref{spin2}.
Firstly, note that the identity in \eqref{eq:spin3} yields $u_k+\tilde x_k-x_{k-1}=0$ and, consequently,
\begin{align*}
 \norm{\tilde x_k-P_V(x_{k-1})}^2&=\norm{P_V(\tilde x_k-x_{k-1})}^2+\norm{P_{V^\perp}(\tilde x_k)}^2\\
    &=\norm{P_V(\tilde x_k-x_{k-1})}^2+\norm{P_{V^\perp}(u_k-x_{k-1})}^2\\
		&=\norm{P_V(\tilde x_k)+P_{V^\perp}(u_k)-x_{k-1}}^2,
\end{align*}
and
\begin{align*}
 P_V(\tilde x_k)+P_{V^\perp}(u_k)=& (\tilde x_k-P_{V^\perp}(\tilde x_k))+P_{V^\perp}(u_k)\\
       &=(x_{k-1}-u_k)-P_{V^\perp}(\tilde x_k)+P_{V^\perp}(u_k)\\
			 &=x_{k-1}-\left[P_V(u_k)+P_{V^\perp}(\tilde x_k)\right].
\end{align*}
Altogether we obtain (a).

(b) This Item is a direct consequence of (a), Theorem \ref{pr:rate.spin2} and Theorem \ref{th:ic.main}.
\qed
\end{proof}

Next we observe that Proposition \ref{th:4.e.3}(b) and the first remark after Algorithm \ref{spin2}
allow us to obtain the iteration-complexity for the Spingarn's partial inverse method.

\begin{proposition}
 \label{pr:s23}
 Let $d_{0,V}$ denote the distance of $x_0$ to the solution set \eqref{def:sol.v}
 and consider \emph{Algorithm \ref{spin2}} with $\sigma=0$ or, equivalently, the \emph{Spingarn's partial inverse method
of \cite{spi-par.amo83}}. For given tolerances $\rho, \epsilon>0$, the latter method finds
 \begin{itemize}
  \item[\emph{(a)}] 
	$x,u\in \HH$ 
	such that $u\in T(x)$, $\max\left\{\norm{x-P_V(x)},\norm{u-P_{V^\perp}(u)}\right\} \leq \rho$ 
	 in at most
	 \begin{align}
	 \label{eq:ic.main03}
	 \mathcal{O}\left(\left\lceil \dfrac{d_{0,V}^{\,2}}{\rho^2} \right\rceil\right)
	\end{align}
	 iterations.
	\item[\emph{(b)}] 
	$x,u\in \HH$
	and $\varepsilon>0$ satisfying the termination criterion \eqref{eq:inc.v02}
	in at most a number of iterations given in \eqref{eq:ic.main02}.
\end{itemize} 
\end{proposition}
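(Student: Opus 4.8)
The plan is to obtain both statements by specializing the iteration-complexity results already established for Algorithm \ref{spin2} to the case $\sigma=0$. By Proposition \ref{th:4.e.3}(b), both the pointwise/ergodic estimates of Theorem \ref{pr:rate.spin2} and the complexity bound of Theorem \ref{th:ic.main} remain valid with Algorithm \ref{spin} replaced by Algorithm \ref{spin2}; and by the first remark following Algorithm \ref{spin2}, setting $\sigma=0$ recovers exactly the Spingarn's partial inverse method of \cite{spi-par.amo83}. Thus the whole proof amounts to reading off what these estimates say when $\sigma=0$.

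For part (a) I would invoke the pointwise bound of Theorem \ref{pr:rate.spin2}(a) applied to Algorithm \ref{spin2}. The crucial observation is that with $\sigma=0$ the error bound becomes $\varepsilon_j\leq \sigma^2 d_{0,V}^{\,2}/(2(1-\sigma^2)k)=0$, so that $\varepsilon_j=0$ and hence $u_j\in T^{0}(\tilde x_j)=T(\tilde x_j)$ by Proposition \ref{pr:teps}(d) (using that $T$ is maximal monotone). This is what upgrades the enlargement inclusion to the genuine inclusion $u\in T(x)$ demanded in (a). Next, since $\sqrt{(1+\sigma)/(1-\sigma)}=1$ at $\sigma=0$, the feasibility estimate reads
\[
  \sqrt{\norm{\tilde x_j-P_V(\tilde x_j)}^2+\norm{u_j-P_{V^\perp}(u_j)}^2}\leq \frac{d_{0,V}}{\sqrt{k}}.
\]
Using the elementary inequality $\max\{a,b\}\leq \sqrt{a^2+b^2}$ for $a,b\geq 0$, this controls $\max\{\norm{\tilde x_j-P_V(\tilde x_j)},\norm{u_j-P_{V^\perp}(u_j)}\}$ by $d_{0,V}/\sqrt{k}$. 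To force the right-hand side below $\rho$ it suffices to require $d_{0,V}/\sqrt{k}\leq \rho$, i.e. $k\geq d_{0,V}^{\,2}/\rho^2$; taking $k=\lceil d_{0,V}^{\,2}/\rho^2\rceil$ and $x:=\tilde x_j$, $u:=u_j$ yields the claimed $\mathcal{O}(\lceil d_{0,V}^{\,2}/\rho^2\rceil)$ bound.

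For part (b) I would simply apply Theorem \ref{th:ic.main} to Algorithm \ref{spin2} with $\sigma=0$: the bound \eqref{eq:ic.main02} is independent of $\sigma$, so the complexity of reaching the termination criterion \eqref{eq:inc.v02} is unchanged, and the claim follows at once. I do not anticipate any serious obstacle here, since everything is a specialization of prior results; the one point requiring care is the exactness argument in (a) — namely recognizing that $\sigma=0$ collapses the error bound to $\varepsilon_j=0$ and then invoking Proposition \ref{pr:teps}(d) to pass from $T^{0}$ to $T$, which is precisely what makes the stronger conclusion $u\in T(x)$ (rather than merely $u\in T^{\varepsilon}(x)$) available in this exact regime.
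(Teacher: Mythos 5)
Your proposal is correct and follows essentially the same route as the paper: both parts are obtained by specializing Theorem \ref{pr:rate.spin2} and Theorem \ref{th:ic.main} to Algorithm \ref{spin2} via Proposition \ref{th:4.e.3}(b), with the key observation that $\sigma=0$ forces $\varepsilon_j=0$ so that Proposition \ref{pr:teps}(d) upgrades $u_j\in T^{0}(\tilde x_j)$ to $u_j\in T(\tilde x_j)$. The only cosmetic difference is that the paper reads $\varepsilon_k=0$ directly from the inequality in \eqref{eq:spin3}, whereas you deduce it from the complexity bound; both are valid, and your explicit choice $k=\lceil d_{0,V}^{\,2}/\rho^2\rceil$ merely fills in a detail the paper leaves implicit.
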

\begin{proof}
(a) The statement in this item is a direct consequence of Proposition \ref{th:4.e.3}(b),
Theorem \ref{pr:rate.spin2}(a) and the fact that $\varepsilon_k=0$ for all $k\geq 1$ 
(because $\sigma=0$
in \eqref{eq:spin3}). (b) Here, the result follows from Proposition \ref{th:4.e.3}(b)
and Theorem \ref{th:ic.main}.
\qed
\end{proof}

\subsection{Proof of Theorem \ref{pr:rate.spin2}}
\label{subsec:pp}

The approach adopted in the current section for solving \eqref{eq:inc.v} 
follows the Spingarn's approach
\cite{spi-par.amo83} which consists
in solving the monotone inclusion 
\begin{align}
 \label{eq:inc.ig}
 0\in T_V(z)
\end{align}
where the maximal monotone operator $T_V:\HH\tos \HH$ is the partial inverse of $T$ with respect to 
the subspace $V$. 
In view of \eqref{eq:def.ig}, we have 
\begin{align}
 \label{eq:id.set}
  (T_V)^{-1}(0)=\mathcal{S}^*(V,T), 
\end{align}
	where the latter set is defined in \eqref{def:sol.v}. Hence, problem \eqref{eq:inc.v} is equivalent to the monotone inclusion problem \eqref{eq:inc.ig}.
%
Before proving Theorem \ref{pr:rate.spin2} we will show that
Algorithm \ref{spin} can be regarded as a special instance of Algorithm \ref{hpe}
for solving \eqref{eq:inc.ig}.

\begin{proposition}
 \label{pr:pi.e.hpe}
 Let $\{\tilde x_k\}_{k\geq 1}$, $\{u_k\}_{k\geq 1}$, $\{\varepsilon_k\}_{k\geq 1}$
and $\{z_k\}_{k\geq 0}$ be generated by \emph{Algorithm \ref{spin}}.
Define $z_0=x_0$ and, for all $k\geq 1$,
  \begin{align}
	  \label{eq:pi.e.hpe}
		 z_k=x_k,\quad \tilde z_k=P_V(\tilde x_k)+P_{V^\perp}(u_k),\quad  v_k=P_V(u_k)+P_{V^\perp}(\tilde x_k).
		 \end{align}
Then, for all $k\geq 1$,
\begin{align}
 \begin{aligned}
 \label{eq:pi.e.hpe02}
 &v_k \in \left(T_V\right)^{\varepsilon_k}(\tilde z_k),\quad \norm{v_k+\tilde z_k-z_{k-1}}^2+
 2\varepsilon_k\leq \sigma^2\norm{\tilde z_k-z_{k-1}}^2,\\
 & z_k=z_{k-1}-v_k,
 \end{aligned} 
\end{align}
i.e., $(\tilde z_k, v_k,\varepsilon_k)$ and $\lambda_k:=1$ satisfy \eqref{eq:hpe} and \eqref{eq:hpe2} for all $k\geq 1$.
As a consequence, the sequences $\{z_k\}_{k\geq 0}$, $\{\tilde z_k\}_{k\geq 1}$, $\{v_k\}_{k\geq 1}$
and $\{\varepsilon_k\}_{k\geq 1}$ are generated by \emph{Algorithm \ref{hpe}}
(with $\lambda_k:=1$ for all $k\geq 1$) for solving \eqref{eq:inc.ig}.
\end{proposition}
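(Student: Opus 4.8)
The plan is to verify directly that the triple $(\tilde z_k, v_k, \varepsilon_k)$ together with $\lambda_k := 1$ satisfies the three requirements in \eqref{eq:pi.e.hpe02}, which are precisely conditions \eqref{eq:hpe} and \eqref{eq:hpe2} defining Algorithm \ref{hpe} applied to the operator $T_V$ for solving \eqref{eq:inc.ig}. The whole argument reduces to elementary identities for the orthogonal projectors $P_V$ and $P_{V^\perp}$, combined with Lemma \ref{lm:bss}; no delicate estimate is involved.

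First I would establish the inclusion $v_k \in (T_V)^{\varepsilon_k}(\tilde z_k)$. By Lemma \ref{lm:bss} this is equivalent to $v_k \in (T^{\varepsilon_k})_V(\tilde z_k)$, and by the definition \eqref{eq:def.ig} of the partial inverse this holds if and only if
\[
 P_V(v_k) + P_{V^\perp}(\tilde z_k) \in T^{\varepsilon_k}\bigl(P_V(\tilde z_k) + P_{V^\perp}(v_k)\bigr).
\]
Using idempotency of the projectors and $P_V P_{V^\perp} = P_{V^\perp} P_V = 0$, I would compute from the definitions in \eqref{eq:pi.e.hpe} that $P_V(v_k) + P_{V^\perp}(\tilde z_k) = u_k$ and $P_V(\tilde z_k) + P_{V^\perp}(v_k) = \tilde x_k$. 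The displayed inclusion then becomes $u_k \in T^{\varepsilon_k}(\tilde x_k)$, which is exactly the inclusion guaranteed by step (1) of Algorithm \ref{spin} (see \eqref{eq:spin}).

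Next I would verify the inequality and the extragradient identity. The key observation is the cancellation
\[
 v_k + \tilde z_k = \bigl[P_V(u_k) + P_{V^\perp}(\tilde x_k)\bigr] + \bigl[P_V(\tilde x_k) + P_{V^\perp}(u_k)\bigr] = \tilde x_k + u_k,
\]
so that $\norm{v_k + \tilde z_k - z_{k-1}} = \norm{\tilde x_k + u_k - x_{k-1}}$, while $\norm{\tilde z_k - z_{k-1}} = \norm{P_V(\tilde x_k) + P_{V^\perp}(u_k) - x_{k-1}}$ holds by definition (recall $z_{k-1} = x_{k-1}$). With these two identities the inequality in \eqref{eq:pi.e.hpe02} becomes verbatim the inequality in \eqref{eq:spin}. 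For the remaining identity, since $v_k = P_V(u_k) + P_{V^\perp}(\tilde x_k)$ and $z_{k-1} = x_{k-1}$, the update \eqref{eq:spin2} reads $x_k = x_{k-1} - v_k$, that is, $z_k = z_{k-1} - v_k$.

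Once \eqref{eq:pi.e.hpe02} is in hand, the final assertion is immediate: these are exactly conditions \eqref{eq:hpe}--\eqref{eq:hpe2} with $T_V$ in place of $T$ and $\lambda_k := 1$, so $\{z_k\}$, $\{\tilde z_k\}$, $\{v_k\}$, $\{\varepsilon_k\}$ form a valid run of Algorithm \ref{hpe} for the inclusion \eqref{eq:inc.ig}. I do not anticipate a genuine obstacle; the only point requiring care is the bookkeeping of the projector identities, in particular recognizing the cancellation $v_k + \tilde z_k = u_k + \tilde x_k$, which is precisely what makes the left-hand sides of \eqref{eq:pi.e.hpe02} and \eqref{eq:spin} coincide.
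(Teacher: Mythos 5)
Your proof is correct and follows essentially the same route as the paper's: the inclusion is obtained from Lemma \ref{lm:bss} together with the definition \eqref{eq:def.ig} of the partial inverse, and the inequality and the extragradient identity are reduced to \eqref{eq:spin} and \eqref{eq:spin2} via the projector identities $v_k+\tilde z_k=u_k+\tilde x_k$ and $P_V(v_k)+P_{V^\perp}(\tilde z_k)=u_k$, $P_V(\tilde z_k)+P_{V^\perp}(v_k)=\tilde x_k$. No substantive difference from the paper's argument.
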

\begin{proof}
From the inclusion in \eqref{eq:spin}, \eqref{eq:def.ig} with $S=T^{\varepsilon_k}$
and Lemma \ref{lm:bss} we have 
$P_V(u_k)+P_{V^\perp}(\tilde x_k)\in \left(T_V\right)^{\varepsilon_k}(P_V(\tilde x_k)+P_{V^\perp}(u_k))$
for all $k\geq 1$, which combined with the definitions of $\tilde z_k$ and $v_k$ in \eqref{eq:pi.e.hpe}
gives the inclusion in \eqref{eq:pi.e.hpe02}. Direct use of \eqref{eq:pi.e.hpe} and
the definition of $\{z_k\}$ yield
\begin{align}
\begin{aligned}
 & v_k+\tilde z_k+z_{k-1}=u_k+\tilde x_k-x_{k-1},\\
 & \tilde z_{k}-z_{k-1}=P_V(\tilde x_k)+P_{V^\perp}(u_k)-x_{k-1},\\
 & z_{k-1}-v_k=x_{k-1}-[P_V(u_k)-P_{V^\perp}(\tilde x_k)],
 \end{aligned}
\end{align}
which combined with \eqref{eq:spin}, \eqref{eq:spin2}
and the definition of $\{z_k\}$ gives the remaining statements
in \eqref{eq:pi.e.hpe02}. The last statement of the proposition follows from
\eqref{eq:pi.e.hpe02} and Algorithm \ref{hpe}'s definition.
\qed
\end{proof}

\noindent
{\it Proof of Theorem \ref{pr:rate.spin2}.}
From \eqref{eq:pi.e.hpe} we obtain
\begin{align}
 \label{eq:xzv}
 \tilde x_k=P_V(\tilde z_k)+P_{V^\perp}(v_k),\quad  
u_k=P_V(v_k)+P_{V^\perp}(\tilde z_k)\quad \forall k\geq 1.
 \end{align}
Direct substitution of the latter identities in $\tilde x_k^a$
and $u_k^a$ in \eqref{eq:def.erg2} yields
\begin{align}
 \label{eq:xzv2}
\tilde x_k^a=P_V(\tilde z_k^a)+P_{V^\perp}(v_k^a),\quad  
u_k^a=P_V(v_k^a)+P_{V^\perp}(\tilde z_k^a)\quad \forall k\geq 1.
\end{align}
Using \eqref{eq:xzv} and \eqref{eq:xzv2} in the definition of $\varepsilon_k^a$
in \eqref{eq:def.erg2} and the fact that the operators $P_V$ and $P_{V^\perp}$
are self-adjoint and idempotent we find
\begin{align}
 \label{eq:carc.e}
  \varepsilon_k^{\,a}=
    \frac{1}{\;\Lambda_k}\;\sum_{\ell=1}^k\,\lambda_\ell (\varepsilon_\ell
    +\inner{\tilde {z}_\ell-
     \tilde {z}_k^{\,a}}{v_\ell-v_k^{\,a}})\quad \forall k\geq 1,
\end{align}
where $\{\varepsilon_k^a\}$ is defined in \eqref{eq:def.erg2}. Now 
consider the ergodic sequences $\{\Lambda_k\}$, $\{\tilde z_k^a\}$ and $\{v_k^a\}$
defined in \eqref{eq:d.eg} with $\lambda_k:=1$ for all $k\geq 1$. 
Let $d_0$ denote the distance of $z_0=x_0$ to the solution
set $(T_V)^{-1}(0)$ of \eqref{eq:inc.ig}
and note that $d_0=d_{0,V}$ in view of \eqref{eq:id.set}.
Based on the above considerations one can use
the last statement in Proposition \ref{pr:pi.e.hpe}
 and Theorem \ref{lm:rhpe2} with $\underline{\lambda}:=1$ to conclude that
for any $k\geq 1$ there exists $j\in \{1,\dots,k\}$ 
  such that  
		\begin{align}
		  \label{eq:700}
      v_j\in (T_V)^{\varepsilon_j}(\tilde z_j),\quad  \norm{v_j}\leq \dfrac{d_{0,V}}{\sqrt{k}}
			\sqrt{\dfrac{1+\sigma}{1-\sigma}},\quad  \varepsilon_j\leq \dfrac{\sigma^2 d_{0,V}^{\,2}}{2(1-\sigma^2)k}\,,
    \end{align}
	and
		 \begin{align}
		 \label{eq:701}
     v_k^a\in (T_V)^{\varepsilon_k^a}(\tilde z_k^a),\quad 
     \norm{v_k^a}\leq \dfrac{2d_{0,V}}{k},\quad
     \varepsilon_k^a\leq \dfrac{2(1+\sigma/\sqrt{1-\sigma^2})d_{0,V}^{\,2}}{k}\,,
     \end{align}
where $\{\varepsilon_k^a\}$ is given in \eqref{eq:def.erg2}.
Using Lemma \ref{lm:bss}, the definition in \eqref{eq:def.ig} (for $S=T^{\varepsilon_k}$),
\eqref{eq:xzv} and \eqref{eq:xzv2}
we conclude that the equivalence
%
	$ 
	v\in (T_V)^{\varepsilon}(\tilde z)\iff v\in (T^{\varepsilon})_{V}(\tilde z)
	 \iff u\in T^{\varepsilon}(\tilde x)
	$
%
holds for $(\tilde z,v,\varepsilon)=(\tilde z_k,v_k,\varepsilon_k)$
and $(\tilde x,u,\varepsilon)=(\tilde x_k,u_k,\varepsilon_k)$,
and $(\tilde z,v,\varepsilon)=(\tilde z_k^a,v_k^a,\varepsilon_k^a)$
and  $(\tilde x,u,\varepsilon)=(\tilde x_k^a,u_k^a,\varepsilon_k^a)$, for all $k\geq 1$.
As a consequence, the inclusions in \eqref{eq:702}
	and \eqref{eq:703} follow from the ones 
	in \eqref{eq:700} and \eqref{eq:701}, respectively.
	%
	%
	%
	Since \eqref{eq:xzv2} gives $v_k^a=P_V(u_k^a)+P_{V^\perp}(\tilde x_k^a)$ for all $k\geq 1$,
	it follows from the definition of $\{v_k\}$ in \eqref{eq:pi.e.hpe}
	that $(v,u,\tilde x)=(v_k,u_k,\tilde x_k)$
	and $(v,u,\tilde x)=(v_k^a,u_k^a,\tilde x_k^a)$
	satisfy
	\begin{align*}
	   \norm{v}^2=\norm{P_V(u)}^2+\norm{P_{V^\perp}(\tilde x)}^2=
		\norm{u-P_{V^\perp}(u)}^2+\norm{\tilde x-P_{V}(\tilde x)}^2
		\end{align*}
	for all $k\geq 1$,
	%
	%
 which, in turn, gives that the inequalities in \eqref{eq:702} and \eqref{eq:703} follow
from the ones in \eqref{eq:700} and \eqref{eq:701}, respectively. This concludes the proof.\qed

\section{Applications to Operator Splitting and Optimization}
\label{sec:spin}
In this section we consider the problem of finding $x\in \mathcal{H}$ such that
\begin{align}
 \label{eq:sum10}
 0\in \sum_{i=1}^m T_i(x)
\end{align}
where $m\geq 2$ and $T_i:\HH\tos \HH$ is maximal monotone for $i=1,\dots, m$.
As observed in~\cite{spi-par.amo83}, $x\in \HH$ satisfies the inclusion \eqref{eq:sum10}
if and only if there exist $u_1,\dots, u_m\in \HH$ such that
\begin{align}
 \label{eq:def.s3}
 u_i\in T_i(x)\;\;\mbox{and}\;\;\sum_{i=1}^m\,u_i=0.
\end{align}
That said,  we consider
the (extended) solution set of \eqref{eq:sum10} -- which we assume nonempty -- to be defined by
 \begin{equation}
   \label{def:sol.s}
  \mathcal{S}^{*}(\Sigma):=\left\{(z_i)_{i=1}^m\in \HH^m\;:\;\exists\; x,u_1,u_2,\dots, u_m\in \HH\;
	 \mbox{satisfying (\ref{eq:def.s3})};\;\; z_i=x+u_i\; \forall i=1,\dots, m\right\}.
\end{equation} 
%
%
Due to its importance in solving large-scale problems,
numerical schemes for solving \eqref{eq:sum10} use 
information of each $T_i$ individually instead of
using the entire sum~\cite{spi-par.amo83,bur.sag.sch-ine.oms06,bau.bot.har-att.jat12,bri.com-skew.sjo11,eck.sva-pro.mp08,eck.sva-gen.sjco09}.
In this section, we apply the results of Section \ref{sec:pi} to present and
 study the iteration-complexity of an inexact-version
of the Spingarn's operator splitting method~\cite{spi-par.amo83} for solving \eqref{eq:sum10}
and, as a by-product, we obtain the iteration-complexity of the latter method.
Moreover, we will apply our results to obtain the iteration-complexity
of a parallel forward-backward algorithm for solving multi-term composite
convex optimization problems.

To this end, we consider the following notion of approximate solution for \eqref{eq:sum10}:
given tolerances $\rho, \delta, \epsilon>0$, find $\bar x_1,\bar x_2,\dots, \bar x_m\in \HH$,
$\bar u_1,\bar u_2,\dots, \bar u_m\in \HH$ and $\bar \varepsilon_1, \bar \varepsilon_2, \dots,
\bar\varepsilon_m>0$ such that 
$(x_i)_{i=1}^m=(\bar x_i)_{i=1}^m$, $(u_i)_{i=1}^m=(\bar u_i)_{i=1}^m$
and $(\varepsilon_i)_{i=1}^m=(\bar \varepsilon_i)_{i=1}^m$
satisfy
\begin{align}
 \label{eq:inc.v03}
 \begin{aligned}
 & u_i\in T_i^{\varepsilon_i}(x_i)\quad \forall i=1,\dots, m,\\
 & \left\|\sum_{i=1}^m\,u_i\right\|\leq \rho,\\
 & \norm{x_i-x_\ell}\leq \delta \quad \forall i,\ell=1,\dots, m,\\
 & \sum_{i=1}^m\,\varepsilon_i\leq \epsilon.
\end{aligned}
\end{align}
For $\rho=\delta=\epsilon=0$, criterion \eqref{eq:inc.v03} gives
$\bar x_1=\bar x_2=\cdots=\bar x_m=:\bar x$, $\sum_{i=1}^m\,\bar u_i=0$ and $\bar u_i \in T_i(\bar x)$
for all $i=1,\dots, m$, i.e., in this case
$\bar x, \bar u_1,\bar u_2,\dots, \bar u_m$ satisfy \eqref{eq:def.s3}. 
%

We next present our inexact version of the Spingarn's operator splitting
method \cite{spi-par.amo83} for solving \eqref{eq:sum10}.

\mgap
\mgap

\noindent
\fbox{
\addtolength{\linewidth}{-2\fboxsep}%
\addtolength{\linewidth}{-2\fboxrule}%
\begin{minipage}{\linewidth}
\begin{algorithm}
\label{sum}
{\bf An inexact Spingarn's operator splitting method for \bf{(\ref{eq:sum10})}}
\end{algorithm}
\begin{itemize}
\item[(0)] Let $(x_0, y_{1,0},\dots,y_{m,0})\in \HH^{m+1}$
  such that $y_{1,0}+\dots+y_{m,0}=0$ and $\sigma\in [0,1[$ be given and set $k=1$.
\item [(1)] For each $i=1,\dots, m$, compute $\tilde x_{i,\,k} \in \HH$
            and $\varepsilon_{i,\,k}\geq 0$ such that
\begin{align}
\label{eq:sum}
 \begin{aligned}
  u_{i,\,k}:=x_{k-1}+y_{i,\,k-1}-\tilde x_{i,\,k} \in T_i^{\,\varepsilon_{i,\,k}}(\tilde x_{i,\,k}),\quad
	\quad	\varepsilon_{i,\,k} \leq \frac{ \sigma^2}{2} \norm{\tilde x_{i,\,k}-x_{k-1}}^2.
 \end{aligned}
\end{align} 
\item[(2)] Define 
     \begin{align}
		   \label{eq:sum2}
			 x_k=\dfrac{1}{m}\sum_{i=1}^m\,\tilde x_{i,\,k}, \qquad
			 y_{i,\,k}=u_{i,\,k}-\dfrac{1}{m}\sum_{\ell=1}^m\, u_{\ell,\,k}\;\;\mbox{for}\;\;i=1,\dots, m,
		 \end{align}
 set $k\leftarrow k+1$ and go to step 1.
\end{itemize}
\noindent
\end{minipage}
} 
\mgap
\mgap

\noindent
\emph{Remarks.} 1) Letting $\sigma=0$ in Algorithm \ref{sum} we obtain the
Spingarn's operator splitting method of \cite{spi-par.amo83}. 
2) In \cite{bur.sag.sch-ine.oms06}, Section 5, an inexact version of the Spingarn's operator
splitting method -- called \emph{split-sPIM($\varepsilon$)} -- was proposed 
for solving \eqref{eq:sum10}. With a different notation, for $i=1,\dots, m$, each
iteration of the latter method can be written as:
\begin{align}
\label{eq:68}
\left\{
       \begin{array}{ll}
			  \vspace{0.3cm}
				 u_{i,\,k}\in T_i^{\,\varepsilon_{i,\,k}}(\tilde x_{i,\,k}),\\
				 \norm{u_{i,\,k}+\tilde x_{i,\,k}-x_{k-1}-y_{i,\,k-1}}^2+2\varepsilon_{i,\,k}\leq 
				 \hat\sigma^2 \left(\norm{\tilde x_{i,\,k}-x_{k-1}}^2+\norm{u_{i,\,k}-y_{i,\,k-1}}^2\right),\\
x_k=x_{k-1}-\dfrac{1}{m}\sum_{i=1}^m\,u_{i,\,k},\quad y_{i,\,k}=y_{i,\,k-1}-\tilde x_{i,\,k}
+\dfrac{1}{m}\sum_{\ell=1}^m\tilde x_{\ell,\,k}\;\;\mbox{for}\;\;i=1,\dots, m,\\
        \end{array}
        \right.
\end{align}
where $\hat\sigma\in [0,1[$. 
The convergence analysis of \cite{bur.sag.sch-ine.oms06}
consists in analyzing \eqref{eq:68} in the framework
of the method described in \eqref{eq:v.hpe}, whose 
convergence may fail if we take $\hat\sigma>1/\sqrt{5}$,
as we observed in Remark \ref{rm:notconv}. 
On the other hand, we will prove in Proposition \ref{pr:sum.e.4} that Algorithm \ref{sum}
can be regarded as a special instance of Algorithm \ref{spin2},
which converges for all $\sigma\in [0,1[$ (see Proposition \ref{pr:pi.e.hpe}, Proposition \ref{th:4.e.3}(b)
and \cite[Theorem 3.1]{sol.sva-hyb.svva99}). Moreover, we mention that contrary to this work no iteration-complexity analysis 
is performed in~\cite{bur.sag.sch-ine.oms06}.
%

For each $i=1,\dots, m$, let $\{\tilde x_{i,\,k}\}$, $\{u_{i,\,k}\}$ and $\{\varepsilon_{i,\,k}\}$ be generated by Algorithm \ref{sum}
and define the \emph{ergodic} sequences associated to them:
\begin{align}
  \label{eq:def.erg4}
 \begin{aligned}
 &\tilde x_{i,\,k}^a:=\dfrac{1}{k}\sum_{\ell=1}^k \tilde x_{i,\ell}\,,\quad  u_{i,\,k}^a:=\dfrac{1}{k}\sum_{\ell=1}^k u_{i,\ell}\,,\\ 
 &\varepsilon_{i,\,k}^{\,a}:=\dfrac{1}{k}\sum_{\ell=1}^k\,
\big[\varepsilon_{i,\ell}+\inner{\tilde x_{i,\ell}-\tilde x_{i,\,k}^a}{u_{i,\ell}-u_{i,\,k}^a}
\big].
\end{aligned}
\end{align}


Next theorem will be proved in Subsection \ref{subsec:ppp}.

\begin{theorem}
 \label{pr:rate.spin3}
For each $i=1,\dots, m$, let $\{\tilde x_{i,\,k}\}$, $\{u_{i,\,k}\}$ and $\{\varepsilon_{i,\,k}\}$ be generated by \emph{Algorithm \ref{sum}} and let
 $\{\tilde x_{i,\,k}^a\}$, $\{u_{i,\,k}^a\}$ and $\{\varepsilon_{i,\,k}^{\,a}\}$ be defined in \eqref{eq:def.erg4} . 
Let also $d_{0,\Sigma}$ denote the distance of $(x_0+y_{1,0},\dots, x_0+y_{m,0})$ to the solution set \eqref{def:sol.s}.
 The following statements hold:
\begin{enumerate}
  \item[\emph{(a)}] For any $k\geq 1$,
	  there exists $j\in \{1,\dots,k\}$ 
    such that  
		\begin{align}
		  \label{eq:706}
			\begin{aligned}
      & u_{i,j}\in T_i^{\varepsilon_{i,j}}(\tilde x_{i,j})\quad \forall i=1,\dots,m,\\  
			&\left\|\sum_{i=1}^m\,u_{i,j}\right\|\leq \dfrac{\sqrt{m}\,d_{0,\Sigma}}{\sqrt{k}}
			\sqrt{\dfrac{1+\sigma}{1-\sigma}},\\
			&\norm{\tilde x_{i,j}-\tilde x_{\ell, j}}\leq \dfrac{2\,d_{0,\Sigma}}{\sqrt{k}}
			\sqrt{\dfrac{1+\sigma}{1-\sigma}}\quad \forall i,\ell=1,\dots, m,\\
		 &\sum_{i=1}^m\,\varepsilon_{i,j}\leq \dfrac{\sigma^2 d_{0,\Sigma}^{\,2}}{2(1-\sigma^2) k}\;;
     \end{aligned}
		\end{align}
	 \item[\emph{(b)}]
	  for any $k\geq 1$,
		 \begin{align}
		  \label{eq:707}
			 \begin{aligned}
      &u_{i,\,k}^a\in T_i^{\varepsilon_{i,\,k}^{\,a}}(\tilde x_{i,\,k}^a)\quad \forall i=1,\dots, m,\\
      &\left\|\sum_{i=1}^m\,u_{i,\,k}^a\right\|\leq \dfrac{2\sqrt{m}\,d_{0,\Sigma}}{k},\\
			&\norm{\tilde x_{i,k}^a-\tilde x_{\ell, k}^a}\leq \dfrac{4\,d_{0,\Sigma}}{k}
			\quad \forall i,\ell=1,\dots, m,\\
      &\sum_{i=1}^m\, \varepsilon_{i,\,k}^{\,a}\leq \dfrac{2(1+\sigma/\sqrt{1-\sigma^2})d_{0,\Sigma}^{\,2}}{k}\,.
      \end{aligned}
		 \end{align}
	\end{enumerate}
\end{theorem}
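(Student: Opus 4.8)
The plan is to recognize Algorithm \ref{sum} as the realization of Algorithm \ref{spin2} in the product Hilbert space $\HH^m$, applied to the product operator and the diagonal subspace, and then to read off the coordinatewise estimates \eqref{eq:706}--\eqref{eq:707} from the product-space estimates supplied by Proposition \ref{th:4.e.3}(b). Concretely, I would set $\mathbf{T}:=T_1\times\cdots\times T_m$ as in \eqref{eq:def.prs}, which is maximal monotone on $\HH^m$, and let $\mathbf{V}:=\{(x,\dots,x):x\in\HH\}$ be the diagonal subspace, so that $\mathbf{V}^{\perp}=\{(u_1,\dots,u_m):\sum_i u_i=0\}$, $P_{\mathbf{V}}(z_1,\dots,z_m)=(\bar z,\dots,\bar z)$ with $\bar z=\tfrac1m\sum_i z_i$, and $P_{\mathbf{V}^{\perp}}(z_1,\dots,z_m)=(z_i-\bar z)_i$. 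With these identifications, finding $\mathbf{x}\in\mathbf{V}$ and $\mathbf{u}\in\mathbf{V}^{\perp}$ with $\mathbf{u}\in\mathbf{T}(\mathbf{x})$ is exactly \eqref{eq:def.s3}, and comparing \eqref{def:sol.v} with \eqref{def:sol.s} shows $\mathcal{S}^{*}(\mathbf{V},\mathbf{T})=\mathcal{S}^{*}(\Sigma)$; since the starting point of the product iteration is $\mathbf{x}_0=(x_0+y_{1,0},\dots,x_0+y_{m,0})$, the distance from $\mathbf{x}_0$ to $\mathcal{S}^{*}(\mathbf{V},\mathbf{T})$ coincides with $d_{0,\Sigma}$.

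Next I would verify the correspondence of iterations. Setting $\mathbf{x}_{k-1}=(x_{k-1}+y_{i,k-1})_{i}$ (so that $P_{\mathbf{V}}(\mathbf{x}_{k-1})=(x_{k-1},\dots,x_{k-1})$ because $\sum_i y_{i,k-1}=0$), $\tilde{\mathbf{x}}_k=(\tilde x_{i,k})_i$, $\mathbf{u}_k=(u_{i,k})_i$ and $\varepsilon_k:=\sum_{i=1}^m\varepsilon_{i,k}$, the identity in \eqref{eq:sum} reads $\mathbf{u}_k=\mathbf{x}_{k-1}-\tilde{\mathbf{x}}_k$, matching the identity in \eqref{eq:spin3}. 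A short computation from definition \eqref{eq:def.teps} shows that the coordinatewise inclusions $u_{i,k}\in T_i^{\varepsilon_{i,k}}(\tilde x_{i,k})$ sum to $\mathbf{u}_k\in\mathbf{T}^{\varepsilon_k}(\tilde{\mathbf{x}}_k)$, and summing the $m$ error inequalities in \eqref{eq:sum} yields $\varepsilon_k\le\tfrac{\sigma^2}{2}\sum_i\norm{\tilde x_{i,k}-x_{k-1}}^2=\tfrac{\sigma^2}{2}\norm{\tilde{\mathbf{x}}_k-P_{\mathbf{V}}(\mathbf{x}_{k-1})}^2$, which is precisely the inequality in \eqref{eq:spin3}. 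Finally, the update \eqref{eq:spin4} gives $\mathbf{x}_k=P_{\mathbf{V}}(\tilde{\mathbf{x}}_k)+P_{\mathbf{V}^{\perp}}(\mathbf{u}_k)=(x_k+y_{i,k})_i$ with $x_k,y_{i,k}$ as in \eqref{eq:sum2}, closing the induction. Hence Algorithm \ref{sum} is a special instance of Algorithm \ref{spin2} for $(\HH^m,\mathbf{T},\mathbf{V})$.

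With the reduction established, I would invoke Proposition \ref{th:4.e.3}(b) (equivalently Theorem \ref{pr:rate.spin2}) to obtain, for the product data, the pointwise and ergodic bounds with $d_{0,\Sigma}$ in the role of $d_{0,V}$. The last ingredient is to unpack the product norms. Using that $P_{\mathbf{V}},P_{\mathbf{V}^{\perp}}$ are orthogonal and that $\norm{P_{\mathbf{V}}(\mathbf{u})}^2=\tfrac1m\Norm{\sum_i u_i}^2$, one gets
\[
\norm{\tilde{\mathbf{x}}-P_{\mathbf{V}}(\tilde{\mathbf{x}})}^2+\norm{\mathbf{u}-P_{\mathbf{V}^{\perp}}(\mathbf{u})}^2=\sum_{i=1}^m\norm{\tilde x_i-\bar x}^2+\frac1m\Norm{\sum_{i=1}^m u_i}^2,\qquad \bar x:=\frac1m\sum_{i=1}^m\tilde x_i.
\]
From the right-hand side, the bound on the combined residual in Theorem \ref{pr:rate.spin2} immediately yields $\Norm{\sum_i u_i}\le\sqrt{m}\,(\text{residual})$, producing the $\sqrt{m}$ factors in \eqref{eq:706}--\eqref{eq:707}, while $\norm{\tilde x_i-\tilde x_\ell}\le\norm{\tilde x_i-\bar x}+\norm{\bar x-\tilde x_\ell}\le 2\big(\sum_i\norm{\tilde x_i-\bar x}^2\big)^{1/2}$ gives the factor $2$ (hence the $2$ in the pointwise and $4$ in the ergodic bound). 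The $\varepsilon$-estimates follow since $\sum_i\varepsilon_{i,k}=\varepsilon_k$ in the pointwise case and, for the ergodic case, the product ergodic error of \eqref{eq:d.eg} splits coordinatewise into $\sum_i\varepsilon_{i,k}^a$ with $\varepsilon_{i,k}^a$ as in \eqref{eq:def.erg4}.

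For the coordinatewise ergodic inclusions $u_{i,k}^a\in T_i^{\varepsilon_{i,k}^a}(\tilde x_{i,k}^a)$ in \eqref{eq:707}, the aggregate inclusion $\mathbf{u}_k^a\in\mathbf{T}^{\varepsilon_k^a}(\tilde{\mathbf{x}}_k^a)$ is not by itself enough, so I would instead apply the transportation formula (Theorem \ref{th:tf}) to each $T_i$ separately, with $u_{i,\ell}\in T_i^{\varepsilon_{i,\ell}}(\tilde x_{i,\ell})$ and weights $\alpha_\ell=1/k$; its output is exactly $u_{i,k}^a\in T_i^{\varepsilon_{i,k}^a}(\tilde x_{i,k}^a)$ with the $\varepsilon_{i,k}^a$ of \eqref{eq:def.erg4} and also $\varepsilon_{i,k}^a\ge 0$. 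The pointwise inclusions in \eqref{eq:706} are immediate from \eqref{eq:sum}. I expect the main obstacle to be careful bookkeeping rather than conceptual depth: one must set up the product/diagonal identification so that every projection, the summed enlargement, and the matching of the two error criteria line up exactly, and then track the constants through the norm decomposition — the $\sqrt{m}$ coming from projecting onto the diagonal and the factor $2$ from the triangle inequality are precisely where an incorrect reduction would most easily produce the wrong bounds.
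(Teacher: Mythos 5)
Your proposal is correct and follows essentially the same route as the paper: recast Algorithm \ref{sum} as Algorithm \ref{spin2} on $\HH^m$ with the product operator and diagonal subspace (this is the paper's Proposition \ref{pr:sum.e.4}), identify $\mathcal{S}^*(\mathbf{V},\mathbf{T})=\mathcal{S}^*(\Sigma)$ so $d_{0,\mathbf{V}}=d_{0,\Sigma}$, read off the bounds from Theorem \ref{pr:rate.spin2} via the explicit projection formulas, and recover the coordinatewise ergodic inclusions by applying the transportation formula (Theorem \ref{th:tf}) to each $T_i$ separately. Your explicit norm decomposition and the resulting $\sqrt{m}$ and factor-of-$2$ bookkeeping match the constants in \eqref{eq:706}--\eqref{eq:707} exactly.
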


As a consequence of Theorem \ref{pr:rate.spin3}(b)
we obtain the iteration-complexity of Algorithm \ref{sum}
to find $x_1,x_2,\dots, x_m\in \HH$, $u_1,u_2,\dots, u_m\in \HH$  
and $\varepsilon_1,\varepsilon_2,\dots, \varepsilon_m>0$ satisfying the termination criterion
\eqref{eq:inc.v03}.

\begin{theorem}\emph{(Iteration-complexity)}
 \label{th:ic.main4}
  Let $d_{0,\Sigma}$ denote the distance of $(x_0+y_{1,0},\dots, x_0+y_{m,0})$ to the solution set \eqref{def:sol.s} 
	and let $\rho,\delta,\epsilon>0$ be given torelances.
	Then, \emph{Algorithm \ref{sum}} finds $x_1,x_2,\dots, x_m\in \HH$, $u_1,u_2,\dots, u_m\in \HH$  
and $\varepsilon_1,\varepsilon_2,\dots, \varepsilon_m>0$
	satisfying the termination criterion \eqref{eq:inc.v03}
	in at most
	\begin{align}
	 \label{eq:ic.main08}
	  \mathcal{O}\left(\max\left\{\left\lceil\dfrac{\sqrt{m}\,d_{0,\Sigma}}{\rho}\right\rceil, \left\lceil \dfrac{d_{0,\Sigma}}{\delta}
		\right\rceil,\left\lceil\dfrac{d_{0,\Sigma}^{\,2}}{\epsilon}\right\rceil\right\}\right)
	\end{align}
	iterations.	
\end{theorem}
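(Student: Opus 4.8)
The plan is to obtain the statement as an immediate quantitative corollary of the ergodic estimates in Theorem~\ref{pr:rate.spin3}(b), since the termination criterion \eqref{eq:inc.v03} has been set up precisely to mirror the four conclusions in \eqref{eq:707}. First I would nominate the ergodic averages as the candidate approximate solution, taking $(x_i,u_i,\varepsilon_i):=(\tilde x_{i,\,k}^a,u_{i,\,k}^a,\varepsilon_{i,\,k}^{\,a})$ for $i=1,\dots,m$, with the iteration index $k$ to be fixed later. The first line of \eqref{eq:707} already delivers the inclusions $u_i\in T_i^{\varepsilon_i}(x_i)$ for every $i$, and it does so for all $k\geq 1$; hence this part of \eqref{eq:inc.v03} places no lower bound on $k$.

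The remaining three requirements of \eqref{eq:inc.v03} are each secured by forcing $k$ past the threshold obtained by inverting the matching bound in \eqref{eq:707}. The inequality $\norm{\sum_{i=1}^m u_i}\leq\rho$ holds as soon as $2\sqrt{m}\,d_{0,\Sigma}/k\leq\rho$, that is $k\geq 2\sqrt{m}\,d_{0,\Sigma}/\rho$; the pairwise bound $\norm{x_i-x_\ell}\leq\delta$ follows from the estimate on $\norm{\tilde x_{i,\,k}^a-\tilde x_{\ell,\,k}^a}$ once $4\,d_{0,\Sigma}/k\leq\delta$, i.e.\ $k\geq 4\,d_{0,\Sigma}/\delta$; and $\sum_{i=1}^m\varepsilon_i\leq\epsilon$ is ensured when $2(1+\sigma/\sqrt{1-\sigma^2})\,d_{0,\Sigma}^{\,2}/k\leq\epsilon$, i.e.\ $k\geq 2(1+\sigma/\sqrt{1-\sigma^2})\,d_{0,\Sigma}^{\,2}/\epsilon$.

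Choosing $k$ to be the least integer dominating all three thresholds simultaneously then produces an index at which all four conditions of \eqref{eq:inc.v03} are met, and absorbing the fixed numerical constants---together with the finite factor $1+\sigma/\sqrt{1-\sigma^2}$ (finite because $\sigma\in[0,1[$)---into the $\mathcal{O}$-notation gives precisely the bound \eqref{eq:ic.main08}.

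I do not anticipate any genuine difficulty, as the argument is pure substitution into \eqref{eq:707}; the only points meriting attention are (i) recording that the inclusions are valid for every $k$, so that the iteration count is governed solely by the three quantitative tolerances, and (ii) verifying that the $\sigma$-dependent constant is legitimately subsumed by the $\mathcal{O}$-constant rather than contaminating the stated rate. All the substantive effort has already been expended in proving Theorem~\ref{pr:rate.spin3}; this result is merely its explicit complexity reformulation.
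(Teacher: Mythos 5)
Your proposal is correct and follows exactly the paper's route: the paper states Theorem \ref{th:ic.main4} as a direct consequence of the ergodic bounds in Theorem \ref{pr:rate.spin3}(b), obtained by taking the ergodic iterates as the candidate approximate solution and inverting each of the three quantitative estimates to get the thresholds on $k$. Nothing is missing.
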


Using the first remark after Algorithm \ref{sum}
and Theorem \ref{pr:rate.spin3} we also obtain 
the \emph{pointwise} and \emph{ergodic} iteration-complexity of Spingarn's operator splitting method~\cite{spi-par.amo83}.

\begin{theorem}\emph{(Iteration-complexity)}
 \label{pr:s24}
 Let $d_{0,\Sigma}$ denote the distance of $(x_0+y_{1,0},\dots, x_0+y_{m,0})$ to the solution set \eqref{def:sol.s}
 and consider \emph{Algorithm \ref{sum}} with $\sigma=0$ or, equivalently, the \emph{Spingarn's operator splitting method
of \cite{spi-par.amo83}}. For given tolerances $\rho,\delta, \epsilon>0$, the latter method finds
 \begin{itemize}
  \item[\emph{(a)}] 
	$x_1,x_2,\dots, x_m\in \HH$ and $u_1,u_2,\dots, u_m\in \HH$   
	such that 
	\begin{align}
	 \label{eq:sos}
	 \begin{aligned}
	 &u_i\in T_i(x_i)\quad \forall i=1,\dots, m,\\
	 &\left\|\sum_{i=1}^ m\,u_i\right\|\leq \rho,\\
	 &\norm{x_i-x_\ell}\leq \delta\,, \quad \forall i,\ell=1,\dots, m,	
	 \end{aligned}
	 \end{align}
	 in at most
	 \begin{align}
	 \label{eq:ic.main07}
	  \mathcal{O}\left(\max\left\{\left\lceil \dfrac{m\,d_{0,\Sigma}^{\,2}}{\rho^2} 
		\right\rceil, \left\lceil \dfrac{d_{0,\Sigma}^{\,2}}{\delta^2} \right\rceil\right\}\right)
	\end{align}
	 iterations.
	\item[\emph{(b)}] 
	$x_1,x_2,\dots, x_m\in \HH$, $u_1,u_2,\dots, u_m\in \HH$
	and $\varepsilon_1,\varepsilon_2,\dots, \varepsilon_m>0$
	satisfying the termination criterion \eqref{eq:inc.v03}
	in at most the number of iterations given in \eqref{eq:ic.main08}.
\end{itemize} 
\end{theorem}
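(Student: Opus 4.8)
The plan is to obtain both items as direct specializations of Theorem~\ref{pr:rate.spin3} to the case $\sigma=0$, which by the first remark following Algorithm~\ref{sum} is exactly Spingarn's original operator splitting method. The essential preliminary observation is that with $\sigma=0$ the inequality in \eqref{eq:sum} reduces to $\varepsilon_{i,k}\leq 0$; since $\varepsilon_{i,k}\geq 0$ by construction, we get $\varepsilon_{i,k}=0$ for all $i=1,\dots,m$ and all $k\geq 1$. Maximality of each $T_i$ together with Proposition~\ref{pr:teps}(d) gives $T_i^{0}=T_i$, so the inclusion in \eqref{eq:sum} becomes the \emph{exact} relation $u_{i,k}\in T_i(\tilde x_{i,k})$; the same holds for the index $j$ appearing in the pointwise estimates.

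For item (a) I would invoke the pointwise bounds of Theorem~\ref{pr:rate.spin3}(a). With $\sigma=0$ the factor $\sqrt{(1+\sigma)/(1-\sigma)}$ equals $1$, so those bounds collapse to $\norm{\sum_{i=1}^m u_{i,j}}\leq \sqrt{m}\,d_{0,\Sigma}/\sqrt{k}$ and $\norm{\tilde x_{i,j}-\tilde x_{\ell,j}}\leq 2\,d_{0,\Sigma}/\sqrt{k}$, while the bound on $\sum_i\varepsilon_{i,j}$ is vacuous since all these terms vanish. Setting $(x_i,u_i):=(\tilde x_{i,j},u_{i,j})$, the inclusions $u_i\in T_i(x_i)$ in \eqref{eq:sos} hold exactly by the preliminary observation. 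To force $\norm{\sum_i u_i}\leq\rho$ it suffices that $k\geq m\,d_{0,\Sigma}^{\,2}/\rho^2$, and to force $\norm{x_i-x_\ell}\leq\delta$ for all $i,\ell$ it suffices that $k\geq 4\,d_{0,\Sigma}^{\,2}/\delta^2$. Taking $k$ to be the maximum of the two ceilings yields precisely the iteration count \eqref{eq:ic.main07}.

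Item (b) requires no further work: it is exactly the assertion of Theorem~\ref{th:ic.main4} in the particular case $\sigma=0$, whose complexity estimate \eqref{eq:ic.main08} is valid for every $\sigma\in[0,1[$; one simply substitutes $\sigma=0$ (which makes $\sqrt{1-\sigma^2}=1$ in the ergodic $\varepsilon$-bound) and reads off the bound. There is essentially no genuine obstacle in this argument, since the entire analytic content resides in Theorem~\ref{pr:rate.spin3} and Theorem~\ref{th:ic.main4}. The only point demanding mild care is the bookkeeping that converts the $\mathcal{O}(1/\sqrt{k})$ pointwise rates and $\mathcal{O}(1/k)$ ergodic rates into iteration counts, namely choosing $k$ large enough that each of the three tolerances $\rho,\delta,\epsilon$ is met simultaneously and absorbing the multiplicative constants (such as the $4$ arising from $2^2$) into the $\mathcal{O}(\cdot)$ and ceiling notation.
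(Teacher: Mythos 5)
Your proposal is correct and follows essentially the same route as the paper: the paper's own proof of item (a) simply cites Theorem~\ref{pr:rate.spin3}(a) together with the observation that $\sigma=0$ forces $\varepsilon_{i,k}=0$ (hence exact inclusions $u_{i,k}\in T_i(\tilde x_{i,k})$), and item (b) is quoted directly from Theorem~\ref{th:ic.main4}. Your additional bookkeeping (evaluating the constants at $\sigma=0$ and solving $2d_{0,\Sigma}/\sqrt{k}\leq\delta$ and $\sqrt{m}\,d_{0,\Sigma}/\sqrt{k}\leq\rho$ for $k$) is exactly the implicit content the paper leaves to the reader.
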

\begin{proof}
(a) This item follows from Theorem \ref{pr:rate.spin3}(a) and the fact that $\varepsilon_{i,\,k}=0$ 
for each $i=1,\dots, m$ and for all $k\geq 1$ 
(because $\sigma=0$
in \eqref{eq:sum}). (b) This item follows directly from Theorem \ref{th:ic.main4}.
\qed
\end{proof}

	\mgap
	\noindent
	{\bf Applications to optimization.}
	In the remaining part of this section we show how
	Algorithm \ref{sum} and its iteration-complexity
  results can be used to derive a
	\emph{parallel forward-backward splitting method} for multi-term composite convex optimization
	and to study its iteration-complexity.
	More precisely, consider the minimization problem
	\begin{align}
	 \label{eq:mp}
	  \min_{x\in \HH}\,\sum_{i=1}^m \left(f_i+\varphi_i\right)(x)
	\end{align}
	where $m\geq 2$ and the following
	conditions are assumed to hold for all $i=1,\dots, m$:
	\begin{itemize}
	 \item[\mbox{(A.1)}] $f_i:\HH\to \R$ is convex, and differentiable with a $L_i$-Lipschitz continuous gradient,
	 i.e., there exists $L_i>0$ such that
		 \begin{align}
		  \label{eq:lips}
			 \norm{\nabla f_i(x)-\nabla f_i(y)}\leq L_i\norm{x-y}\quad \forall x,y\in \HH;
		 \end{align}
		\item[\mbox{(A.2)}] $\varphi_i:\HH\to \BR$ is proper, convex and closed
		with an easily computable resolvent $(\lambda\partial \varphi_i+I)^{-1}$, for any $\lambda>0$;
	 \item[\mbox{(A.3)}] the solution set of \eqref{eq:mp} is nonempty.
	 \end{itemize}

	We also assume standard regularity conditions
	\footnote{see, e.g., \cite[Corollary 16.39]{bau.com-book}}
	on the functions $\varphi_i$ which make \eqref{eq:mp} 
	equivalent to the monotone inclusion problem \eqref{eq:sum10}
	 with $T_i:=\nabla f_i+\partial \varphi_i$, for all $i=1,\dots, m$, i.e., which make it equivalent
	to the problem of finding $x\in \HH$ such that
	\begin{align}
	 \label{eq:sum11}
	 0\in \sum_{i=1}^m\, \left(\nabla f_i+\partial \varphi_i\right)(x).
	\end{align}

Analogously to \eqref{eq:inc.v03}, we consider the following
notion of approximate solution for \eqref{eq:mp}:
given tolerances $\rho, \delta, \epsilon>0$, find $\bar x_1,\bar x_2,\dots, \bar x_m\in \HH$,
$\bar u_1,\bar u_2,\dots, \bar u_m\in \HH$ and $\bar \varepsilon_1, \bar \varepsilon_2, \dots,
\bar\varepsilon_m>0$ such that 
$(x_i)_{i=1}^m=(\bar x_i)_{i=1}^m$, $(u_i)_{i=1}^m=(\bar u_i)_{i=1}^m$
and $(\varepsilon_i)_{i=1}^m=(\bar \varepsilon_i)_{i=1}^m$
satisfy \eqref{eq:inc.v03} with $T_i^{\varepsilon_i}$ replaced
by $\partial_{\varepsilon_i}\,f_i+\partial \varphi_i$, for each $i=1,\dots, m$.
%
%
For $\rho=\delta=\epsilon=0$, this criterion gives
$\bar x_1=\bar x_2=\cdots=\bar x_m=:\bar x$, $\sum_{i=1}^m\,\bar u_i=0$ and $\bar u_i \in 
\left(\nabla f_i+\partial \varphi_i\right)(\bar x)$
for all $i=1,\dots, m$, i.e., in this case
$\bar x$ solves \eqref{eq:sum11}.

	We will present a parallel forward-backward method for solving \eqref{eq:mp} whose iteration-complexity
	is obtained by regarding it as a special instance of Algorithm \ref{sum}. 
	 Since problem \eqref{eq:mp} appears in
	 various applications of convex optimization, it turns out
	that the development of efficient numerical schemes for solving it -- specially with $m\geq 2$ very
	large -- is of great importance.

\mgap

Next is our method for solving \eqref{eq:mp}.

	\mgap
\mgap

\noindent
\fbox{
\addtolength{\linewidth}{-2\fboxsep}%
\addtolength{\linewidth}{-2\fboxrule}%
\begin{minipage}{\linewidth}
\begin{algorithm}
\label{fb}
{\bf A parallel forward-backward splitting method for \bf{(\ref{eq:mp})}}
\end{algorithm}
\begin{itemize}
\item[(0)] Let $(x_0, y_{1,0},\dots,y_{m,0})\in \HH^{m+1}$
  such that $y_{1,0}+\dots+y_{m,0}=0$ and $\sigma\in ]0,1[$ be given and set 
	$\lambda=\sigma^2/\max\{L_i\}_{i=1}^m$ and $k=1$.
\item [(1)] For each $i=1,\dots, m$, compute 
%
\begin{align}
\label{eq:sum80}
 \begin{aligned}
  &\tilde x_{i,\,k}=(\lambda \partial \varphi_i + I)^{-1}\left(x_{k-1}+y_{i,\,k-1}-\lambda \nabla f_i(x_{k-1})\right).
 \end{aligned}
\end{align} 
\item[(2)] Define 
     \begin{align}
		   \label{eq:sum82}
			 x_k=\dfrac{1}{m}\sum_{i=1}^m\,\tilde x_{i,\,k}, \qquad
		   y_{i,\,k}=y_{i,\,k-1}+x_k-\tilde x_{i,\,k}\;\;\mbox{for}\;\; i=1,\dots, m, 
		\end{align}
 set $k\leftarrow k+1$ and go to step 1.
\end{itemize}
\noindent
\end{minipage}
} 
\mgap
\mgap

\noindent
\emph{Remarks.} 1) 
Since in \eqref{eq:sum80}
we have a forward step in the direction $-\nabla f_i(x_{k-1})$ and a backward step
given by the resolvent of $\varphi_i$, Algorithm \ref{fb} can be regarded as
a parallel variant of the classical forward-backward splitting algorithm \cite{fac-pan.bookI} . 2) For $m=1$ the above method coincides with
the forward-backward method of \cite{mon.sva-icc.pre}, for which the iteration-complexity was
studied in the latter reference.


For each $i=1,\dots, m$, let $\{x_{k}\}$, $\{\tilde x_{i,\,k}\}$
be generated by Algorithm \ref{fb}, $\{u_{i,\,k}\}$ and $\{\varepsilon_{i,\,k}\}$
be defined in \eqref{eq:def.u.fb}  and let
 $\{\tilde x_{i,\,k}^a\}$, $\{u_{i,\,k}^a\}$ and $\{\varepsilon_{i,\,k}^{\,a}\}$ be given in \eqref{eq:def.erg4}.
Define, for all $k\geq 1$,
\begin{align}
\label{eq:1400}
 \begin{aligned}
& u'_{i,\,k}:=\dfrac{1}{\lambda}\,u_{i,\,k}\,,\quad
\varepsilon'_{i,\,k}:=\dfrac{1}{\lambda}\,\varepsilon_{i,\,k}\,,\quad  
u'^{\,a}_{i,\,k}:=\dfrac{1}{\lambda}\,u_{i,\,k}^a\,,\quad 
\varepsilon'^{\,a}_{i,\,k}:=\dfrac{1}{\lambda}\,\varepsilon_{i,\,k}^a\,,\\
& \varepsilon''^{\,a}_{i,\,k}:=\dfrac{1}{k}\sum_{\ell=1}^k\,
\Big[\varepsilon'_{i,\ell}+\inner{\tilde x_{i,\ell}-\tilde x_{i,\,k}^a}{\nabla f_i(x_{\ell-1})-
\dfrac{1}{k}\sum_{s=1}^k\,\nabla f_i(x_{s-1})
}\Big].
\end{aligned}
\end{align}
%
%
%
%

Next theorem will be proved in Subsection \ref{subsec:p8}.

\begin{theorem}
 \label{pr:rate.13}
For each $i=1,\dots, m$, let $\{\tilde x_{i,\,k}\}$
be generated by \emph{Algorithm \ref{fb}}
and $\{\tilde x^{\,a}_{i,\,k}\}$ be given in \eqref{eq:def.erg4}; let
$\{u'_{i,\,k}\}$, $\{\varepsilon'_{i,\,k}\}$, $\{u'^{\,a}_{i,\,k}\}$, $\{\varepsilon'^{\,a}_{i,\,k}\}$
and $\{\varepsilon''^{\,a}_{i,\,k}\}$ be given in \eqref{eq:1400}.
Let also $d_{0,\Sigma}$ denote the distance of $(x_0+y_{1,0},\dots, x_0+y_{m,0})$ to the solution set \eqref{def:sol.s}
in which $T_i:=\nabla f_i+\partial \varphi_i$ for $i=1,\dots, m$, and define $L_{\Sigma}:=\max\{L_i\}_{i=1}^m$.
%
%
%
%
 The following  hold:
\begin{enumerate}
  \item[\emph{(a)}] For any $k\geq 1$,
	  there exists $j\in\{1,\dots,k\}$ 
    such that  
		\begin{align}
		  \label{eq:1206}
			\begin{aligned}
      & u'_{i,j}\in \left(\partial_{\varepsilon'_{i,j}}f_i+\partial \varphi_i\right)(\tilde x_{i,j})\quad \forall i=1,\dots,m,\\  
			& \left\|\sum_{i=1}^m\,u'_{i,j}\right\|\leq \dfrac{\sqrt{m}\, L_\Sigma\, d_{0,\Sigma}}{\sigma^2 \sqrt{k}}
			\sqrt{\dfrac{1+\sigma}{1-\sigma}},\\
			& \norm{\tilde x_{i,j}-\tilde x_{\ell, j}}\leq
        \dfrac{2\,d_{0,\Sigma}}{\sqrt{k}}
			\sqrt{\dfrac{1+\sigma}{1-\sigma}} \quad \forall i,\ell=1,\dots, m,\\
			&\sum_{i=1}^m\,\varepsilon'_{i,j}\leq \dfrac{L_\Sigma\,d_{0,\Sigma}^{\,2}}{2(1-\sigma^2) k}\;;
     \end{aligned}
		\end{align}
	 \item[\emph{(b)}]
	  for any $k\geq 1$,
		 \begin{align}
		  \label{eq:1207}
			 \begin{aligned}
      & u'^{\,a}_{i,\,k}\in \left(\partial_{\varepsilon''^{\,a}_{i,\,k}}f_i+
			\partial_{\left(\varepsilon'^{\,a}_{i,\,k}-\varepsilon''^{\,a}_{i,\,k}\right)}\varphi_i
			\right)(\tilde x^a_{i,\,k})\quad \forall i=1,\dots, m,\\
		  & \left\|\sum_{i=1}^m\,u'^a_{i,k}\right\|\leq \dfrac{2\sqrt{m}\,L_{\Sigma}\,d_{0,\Sigma}}{\sigma^2 k}, \\
			& \norm{\tilde x^a_{i,\,k}-\tilde x^a_{\ell,k}}\leq \dfrac{4d_{0,\Sigma}}{k}\quad \forall i,\ell=1,\dots, m, \\
      &\sum_{i=1}^m\, \varepsilon'^{\,a}_{i,\,k}\leq \dfrac{2(1+\sigma/\sqrt{1-\sigma^2})\,L_{\Sigma}\,d_{0,\Sigma}^{\,2}}{\sigma^2 k}\,.
      \end{aligned}
		 \end{align}
	\end{enumerate}
\end{theorem}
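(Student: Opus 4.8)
The plan is to recognize Algorithm \ref{fb} as the special instance of Algorithm \ref{sum} obtained by running the latter on the scaled operators $\lambda T_i=\lambda\nabla f_i+\lambda\partial\varphi_i$, and then to read off the estimates by rescaling the conclusions of Theorem \ref{pr:rate.spin3} through the factor $1/\lambda=L_\Sigma/\sigma^2$.

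To establish the reduction I would set $u_{i,k}:=x_{k-1}+y_{i,k-1}-\tilde x_{i,k}$ (the quantities of \eqref{eq:def.u.fb}) and note that the resolvent step \eqref{eq:sum80} is equivalent to $u_{i,k}-\lambda\nabla f_i(x_{k-1})\in\lambda\partial\varphi_i(\tilde x_{i,k})$. Because $\lambda f_i$ has $\lambda L_i$-Lipschitz gradient, Lemma \ref{lm:ms} and Lemma \ref{lm:ineq.lin} give $\lambda\nabla f_i(x_{k-1})\in\partial_{\varepsilon_{i,k}}(\lambda f_i)(\tilde x_{i,k})$ with $\varepsilon_{i,k}=(\lambda L_i/2)\norm{\tilde x_{i,k}-x_{k-1}}^2$, and Proposition \ref{pr:teps}(d),(e),(b) then assemble the two memberships into $u_{i,k}\in(\lambda T_i)^{\varepsilon_{i,k}}(\tilde x_{i,k})$. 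Since $\lambda=\sigma^2/L_\Sigma$ forces $\lambda L_i\leq\sigma^2$, the tolerance $\varepsilon_{i,k}\leq(\sigma^2/2)\norm{\tilde x_{i,k}-x_{k-1}}^2$ of \eqref{eq:sum} holds. Finally I would match the bookkeeping updates: $x_k=(1/m)\sum_i\tilde x_{i,k}$ is identical, and, using the invariant $\sum_i y_{i,k}=0$ (preserved by both recursions since $\sum_i y_{i,0}=0$), a one-line computation shows $y_{i,k-1}+x_k-\tilde x_{i,k}=u_{i,k}-(1/m)\sum_\ell u_{\ell,k}$, so \eqref{eq:sum82} is exactly the update \eqref{eq:sum2}. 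Thus Algorithm \ref{fb} is Algorithm \ref{sum} run on $\{\lambda T_i\}$.

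Granting the reduction, Theorem \ref{pr:rate.spin3} applied to $\{\lambda T_i\}$ supplies all the required norm and error bounds for $\{u_{i,k}\}$, $\{\tilde x_{i,k}\}$, $\{\varepsilon_{i,k}\}$, with $d_{0,\Sigma}$ denoting the distance to the pertinent extended solution set \eqref{def:sol.s}. Since $u'_{i,k}=u_{i,k}/\lambda$ and $\varepsilon'_{i,k}=\varepsilon_{i,k}/\lambda$, dividing the $u$- and $\varepsilon$-estimates by $\lambda=\sigma^2/L_\Sigma$ produces precisely the factors $L_\Sigma/\sigma^2$ in \eqref{eq:1206}--\eqref{eq:1207}, while the distances $\norm{\tilde x_{i,j}-\tilde x_{\ell,j}}$ and $\norm{\tilde x^a_{i,k}-\tilde x^a_{\ell,k}}$ are scale-invariant and pass through unchanged. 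The pointwise inclusion in \eqref{eq:1206} I would obtain directly: the rescaled resolvent identity gives $u'_{i,j}-\nabla f_i(x_{j-1})\in\partial\varphi_i(\tilde x_{i,j})$, and Lemma \ref{lm:ms}, Lemma \ref{lm:ineq.lin} give $\nabla f_i(x_{j-1})\in\partial_{\varepsilon'_{i,j}}f_i(\tilde x_{i,j})$ with $\varepsilon'_{i,j}=(L_i/2)\norm{\tilde x_{i,j}-x_{j-1}}^2$; summing these two memberships yields $u'_{i,j}\in(\partial_{\varepsilon'_{i,j}}f_i+\partial\varphi_i)(\tilde x_{i,j})$.

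The delicate step, which I expect to be the main obstacle, is the ergodic inclusion in \eqref{eq:1207}, where the single $\varepsilon$-enlargement error must be split across the two summands $\partial_{\varepsilon''^{\,a}_{i,k}}f_i$ and $\partial_{(\varepsilon'^{\,a}_{i,k}-\varepsilon''^{\,a}_{i,k})}\varphi_i$. Here I would invoke the transportation formula Theorem \ref{th:tf}(b) twice, with weights $\alpha_\ell=1/k$: once for $\partial f_i$ with the exact subgradients $\nabla f_i(x_{\ell-1})\in\partial_{\varepsilon'_{i,\ell}}f_i(\tilde x_{i,\ell})$, which gives $\frac1k\sum_\ell\nabla f_i(x_{\ell-1})\in\partial_{\varepsilon''^{\,a}_{i,k}}f_i(\tilde x^a_{i,k})$ with transported error matching the definition of $\varepsilon''^{\,a}_{i,k}$ in \eqref{eq:1400} verbatim; and once for $\partial\varphi_i$ with the zero-error subgradients $u'_{i,\ell}-\nabla f_i(x_{\ell-1})\in\partial\varphi_i(\tilde x_{i,\ell})$. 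The crux is to check that the error of the second application equals $\varepsilon'^{\,a}_{i,k}-\varepsilon''^{\,a}_{i,k}$; this follows by writing $\varepsilon'^{\,a}_{i,k}=\frac1k\sum_\ell[\varepsilon'_{i,\ell}+\inner{\tilde x_{i,\ell}-\tilde x^a_{i,k}}{u'_{i,\ell}-u'^{\,a}_{i,k}}]$ (obtained from \eqref{eq:def.erg4} after dividing by $\lambda$) and cancelling the common $\varepsilon'_{i,\ell}$ and inner-product terms. Summing the two memberships gives the asserted inclusion, and the bound on $\sum_i\varepsilon'^{\,a}_{i,k}$ follows by dividing the corresponding estimate of Theorem \ref{pr:rate.spin3}(b) by $\lambda$.
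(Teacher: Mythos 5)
Your proposal is correct and follows essentially the same route as the paper: the reduction of Algorithm \ref{fb} to Algorithm \ref{sum} applied to the scaled operators $\nabla(\lambda f_i)+\partial(\lambda\varphi_i)$ (the paper's Proposition \ref{pr:fb.e.sum}), rescaling the bounds of Theorem \ref{pr:rate.spin3} by $1/\lambda=L_\Sigma/\sigma^2$, and the double application of the transportation formula (Theorem \ref{th:tf}(b)) to $\partial f_i$ and $\partial\varphi_i$ with the cancellation identifying the second transported error as $\varepsilon'^{\,a}_{i,k}-\varepsilon''^{\,a}_{i,k}$, which is exactly how the paper derives \eqref{eq:1402} and \eqref{eq:1403}. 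The only small discrepancy is that the paper defines $\varepsilon_{i,k}$ as the Bregman-type quantity in \eqref{eq:def.u.fb} and merely bounds it by $(\lambda L_i/2)\norm{\tilde x_{i,k}-x_{k-1}}^2$ via Lemma \ref{lm:ineq.lin}, whereas you set it equal to that bound; this does not affect the validity of your argument.
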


The following theorem is a direct consequence of Theorem \ref{pr:rate.13}.

\begin{theorem}\emph{(iteration-complexity)}
 Let $d_{0,\Sigma}$ denote the distance of $(x_0+y_{1,0},\dots, x_0+y_{m,0})$ to the solution set \eqref{def:sol.s} 
in which $T_i:=\nabla f_i+\partial \varphi_i$, for $i=1,\dots, m$, and let $\rho,\delta,\epsilon>0$ be given torelances.
Let $L_\Sigma:=\max\{L_i\}_{i=1}^m$.
	Then, \emph{Algorithm \ref{fb}} finds
\begin{itemize}
 \item[\emph{(a)}]
 $x_1,x_2,\dots, x_m\in \HH$, $u_1,u_2,\dots, u_m\in \HH$
and $\varepsilon_1,\varepsilon_2,\dots, \varepsilon_m>0$ satisfying
the termination criterion \eqref{eq:inc.v03}
with $T_i^{\varepsilon_i}$ replaced by $\partial_{\varepsilon_i} f_i+\partial\varphi_i$
in at most
\begin{align}
 \mathcal{O}\left(\max\left\{\left\lceil\dfrac{m\,L_{\Sigma}^2\,d_{0,\Sigma}^2}{\rho^2}\right\rceil, 
 \left\lceil \dfrac{d_{0,\Sigma}^2}{\delta^2}
		\right\rceil,
		\left\lceil\dfrac{L_{\Sigma}\,d_{0,\Sigma}^{\,2}}{\epsilon}\right\rceil\right\}\right)
\end{align}
iterations.
\item[\emph{(b)}] 
	$x_1,x_2,\dots, x_m\in \HH$, $u_1,u_2,\dots, u_m\in \HH$, $\varepsilon_1,\varepsilon_2,\dots, \varepsilon_m>0$   
	and $\hat \varepsilon_1,\hat \varepsilon_2,\dots, \hat \varepsilon_m>0$
	such that 
	\begin{align}
	 \begin{aligned}
	 &u_i\in \left(\partial_{\varepsilon_i}f_i+\partial_{\hat\varepsilon_i}\varphi_i\right)(x_i)\quad \forall i=1,\dots, m,\\
	 &\left\|\sum_{i=1}^ m\,u_i\right\|\leq \rho,\\
	 &\norm{x_i-x_\ell}\leq \delta\,, \quad \forall i,\ell=1,\dots, m,\\
	 &\sum_{i=1}^m\,(\varepsilon_i+\hat\varepsilon_i)\leq \epsilon
	 \end{aligned}
	 \end{align}
	 in at most
	 \begin{align}
	  \mathcal{O}\left(\max\left\{\left\lceil \dfrac{\sqrt{m}\,L_\Sigma\,d_{0,\Sigma}}{\rho} 
		\right\rceil, \left\lceil \dfrac{d_{0,\Sigma}}{\delta} \right\rceil,
		\left\lceil\dfrac{L_{\Sigma}\,d_{0,\Sigma}^{\,2}}{\epsilon}\right\rceil
		\right\}\right)
	\end{align}
	 iterations.
	
\end{itemize}
\end{theorem}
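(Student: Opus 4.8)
The plan is to read off both items directly from Theorem~\ref{pr:rate.13} by solving each of its rate inequalities for the iteration index $k$; no new analytic estimate is required. Throughout, $\sigma\in\,]0,1[$ is fixed, so every $\sigma$-dependent factor appearing in Theorem~\ref{pr:rate.13} (such as $\sqrt{(1+\sigma)/(1-\sigma)}$, $1/\sigma^2$ and $1+\sigma/\sqrt{1-\sigma^2}$) is an absolute constant that is absorbed into the $\mathcal{O}(\cdot)$ notation. Item (a), whose tolerance thresholds scale like $\rho^{-2}$ and $\delta^{-2}$, will follow from the \emph{pointwise} bounds in Theorem~\ref{pr:rate.13}(a), while item (b), whose thresholds scale like $\rho^{-1}$ and $\delta^{-1}$, will follow from the \emph{ergodic} bounds in Theorem~\ref{pr:rate.13}(b).

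For item (a), fix $k\geq 1$ and let $j\in\{1,\dots,k\}$ be the index furnished by Theorem~\ref{pr:rate.13}(a). Setting $(x_i,u_i,\varepsilon_i):=(\tilde x_{i,j},u'_{i,j},\varepsilon'_{i,j})$ for $i=1,\dots,m$, the inclusion $u_i\in(\partial_{\varepsilon_i}f_i+\partial\varphi_i)(x_i)$ is exactly the first line of \eqref{eq:1206}, which is precisely the inclusion in the criterion \eqref{eq:inc.v03} with $T_i^{\varepsilon_i}$ replaced by $\partial_{\varepsilon_i}f_i+\partial\varphi_i$. The three remaining conditions of \eqref{eq:inc.v03} are met as soon as $k$ exceeds the corresponding threshold: the bound $\norm{\sum_{i}u_i}\leq \rho$ holds once $\tfrac{\sqrt{m}\,L_\Sigma\,d_{0,\Sigma}}{\sigma^2\sqrt{k}}\sqrt{(1+\sigma)/(1-\sigma)}\leq\rho$, i.e.\ once $k=\Omega(m\,L_\Sigma^2\,d_{0,\Sigma}^2/\rho^2)$; the bound $\norm{x_i-x_\ell}\leq\delta$ holds once $k=\Omega(d_{0,\Sigma}^2/\delta^2)$; and $\sum_i\varepsilon_i\leq\epsilon$ holds once $k=\Omega(L_\Sigma\,d_{0,\Sigma}^2/\epsilon)$. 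Taking $k$ to be the maximum of the three (rounded up) yields the count claimed in (a).

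For item (b), apply Theorem~\ref{pr:rate.13}(b) and set $(x_i,u_i):=(\tilde x^a_{i,\,k},u'^{\,a}_{i,\,k})$, $\varepsilon_i:=\varepsilon''^{\,a}_{i,\,k}$ and $\hat\varepsilon_i:=\varepsilon'^{\,a}_{i,\,k}-\varepsilon''^{\,a}_{i,\,k}$. With this identification the first line of \eqref{eq:1207} reads precisely $u_i\in(\partial_{\varepsilon_i}f_i+\partial_{\hat\varepsilon_i}\varphi_i)(x_i)$, and since $\varepsilon_i+\hat\varepsilon_i=\varepsilon'^{\,a}_{i,\,k}$ we get $\sum_i(\varepsilon_i+\hat\varepsilon_i)=\sum_i\varepsilon'^{\,a}_{i,\,k}$. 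Now solve the ergodic inequalities of \eqref{eq:1207} for $k$: $\norm{\sum_i u_i}\leq\rho$ holds once $k=\Omega(\sqrt{m}\,L_\Sigma\,d_{0,\Sigma}/\rho)$, $\norm{x_i-x_\ell}\leq\delta$ once $k=\Omega(d_{0,\Sigma}/\delta)$, and $\sum_i(\varepsilon_i+\hat\varepsilon_i)\leq\epsilon$ once $k=\Omega(L_\Sigma\,d_{0,\Sigma}^2/\epsilon)$. Taking the maximum (rounded up) gives the count claimed in (b).

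The argument is essentially mechanical, so there is no genuine obstacle; the single point deserving care is that $\hat\varepsilon_i=\varepsilon'^{\,a}_{i,\,k}-\varepsilon''^{\,a}_{i,\,k}$ must be nonnegative in order for $\partial_{\hat\varepsilon_i}\varphi_i$ to be meaningful. This is not something I would re-establish here, since the inclusion involving $\partial_{(\varepsilon'^{\,a}_{i,\,k}-\varepsilon''^{\,a}_{i,\,k})}\varphi_i$ is already asserted in Theorem~\ref{pr:rate.13}(b), so its $\varepsilon$-parameter is implicitly nonnegative there; I would simply quote that inclusion. The remaining bookkeeping---checking that the three thresholds combine through $\max\{\cdot,\cdot,\cdot\}$ and that the ceilings match the stated expressions---is immediate.
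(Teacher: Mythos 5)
Your proposal is correct and follows exactly the route the paper intends: the paper states this theorem as a direct consequence of Theorem \ref{pr:rate.13} without further proof, and your derivation simply solves the pointwise rates of \eqref{eq:1206} for $k$ to get item (a) and the ergodic rates of \eqref{eq:1207} for $k$ to get item (b), with the correct identification $\varepsilon_i:=\varepsilon''^{\,a}_{i,\,k}$, $\hat\varepsilon_i:=\varepsilon'^{\,a}_{i,\,k}-\varepsilon''^{\,a}_{i,\,k}$. The thresholds you extract match the stated complexity bounds, so nothing is missing.
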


\subsection{Proof of Theorem \ref{pr:rate.spin3}}
\label{subsec:ppp}

Analogously to Subsection \ref{subsec:pp}, in the current section
we follow the Spingarn's approach in \cite{spi-par.amo83}
for solving problem \eqref{eq:sum10} which consists
in solving the following inclusion in the product space 
$\HH^m$:
\begin{align}
 {\bf 0}\in {\bf T}_{{\bf V}}({\bf z}),
\end{align} 
where ${\bf T}:\HH^m\tos \HH^m$ is the maximal monotone operator defined by
\begin{align}
 \label{eq:def.t}
  {\bf T}(x_1,x_2,\dots,x_m):=T_1(x_1)\times T_2(x_2)\times \cdots \times T_m(x_m)\quad \forall (x_1,x_2,\dots,x_m)\in \HH^m,
\end{align}
and 
\begin{align}
 \label{eq:def.v}
 {\bf V}:=\left\{(x_1,x_2,\dots,x_m)\in \HH^m\;:\;x_1=x_2=\dots=x_m\right\}
\end{align}
is a closed subspace of $\HH^m$
whose orthogonal complement is
%
\begin{align}
\label{eq:v.perp}
 {\bf V}^\perp=\left\{(x_1,x_2,\dots,x_m)\in \HH^m\;:\;x_1+x_2+\cdots+x_m=0\right\}.
\end{align}
Based on the above observations, we have that problem \eqref{eq:sum10} is equivalent to \eqref{eq:inc.v} 
with ${\bf T}$ and ${\bf V}$ given in \eqref{eq:def.t} and \eqref{eq:def.v}, respectively.
Moreover, in this case, the orthogonal projections onto ${\bf V}$ and ${\bf V}^\perp$ have the
explicit formulae:
\begin{align}
 \label{eq:f.p}
 \begin{aligned}
 &P_{{\bf V}}(x_1,x_2,\dots, x_m)=\left(\dfrac{1}{m}\sum_{i=1}^m\,x_i,\dots,\dfrac{1}{m}\sum_{i=1}^m\,x_i\right),\\
 &P_{{\bf V}^\perp}(x_1,x_2,\dots, x_m)=\left(x_1-\dfrac{1}{m}\sum_{i=1}^m\,x_i,\dots,x_m-\dfrac{1}{m}\sum_{i=1}^m\,x_i\right).
\end{aligned}
\end{align}

Next we show that Algorithm \ref{sum} can be regarded as a special instance
of Algorithm \ref{spin2} and, as a consequence, we will obtain that
Theorem \ref{pr:rate.spin3} follows from results of Section \ref{sec:pi}
for Algorithm \ref{spin2}.

\begin{proposition}
\label{pr:sum.e.4}
 Let $\{x_k\}_{k\geq 0}$ and, for each $i=1,\dots, m$, $\{y_{i,\,k}\}_{k\geq 0}$, $\{\tilde x_{i,\,k}\}_{k\geq 1}$, 
 $\{u_{i,\,k}\}_{k\geq 1}$ and $\{\varepsilon_{i,\,k}\}_{k\geq 1}$
 be generated by \emph{Algorithm \ref{sum}}. 
 Consider the sequences $\{{\bf x}_k\}_{k\geq 0}$, $\{\tilde {\bf x}_k\}_{k\geq 1}$ and $\{{\bf u}_k\}_{k\geq 1}$
 in $\HH^m$ and $\{\varepsilon_k\}_{k\geq 1}$ in $\R_+$ where
\begin{align}
  \label{eq:234}
 \begin{aligned}
 & {\bf x}_k:=(x_k+y_{1,\,k},\dots, x_k+y_{m,\,k}),\quad \tilde {\bf x}_k:=(\tilde x_{1,\,k},\dots, \tilde x_{m,\,k}),\\
 & \varepsilon_k:=\sum_{i=1}^m\,\varepsilon_{i,\,k},\quad {\bf u}_k:=(u_{1,\,k},\dots, u_{m,\,k}).
  \end{aligned}
	\end{align}
	Then, for all $k\geq 1$,
	\begin{align}
	 \label{eq:spin.e.hpe02}
	  \begin{aligned}
	  & {\bf u}_k\in \left(T_1^{\,\varepsilon_{1,k}}\times \dots \times T_m^{\,\varepsilon_{m,k}}\right)(\tilde {\bf x}_k),
		\quad {\bf u}_k+\tilde {\bf x}_k-{\bf x}_{k-1}=0,
		\quad \varepsilon_k\leq \dfrac{\sigma^2}{2}\norm{\tilde {\bf x}_k-P_{\bf V}({\bf x}_{k-1})}^2,\\
		& {\bf x}_k=P_{\bf V}({\bf \tilde x}_{k})+P_{{\bf V}^\perp}({\bf u}_k).
		\end{aligned}
		\end{align}
 As a consequence of \eqref{eq:spin.e.hpe02}, the sequences 
$\{{\bf x}_k\}_{k\geq 0}$, $\{\tilde {\bf x}_k\}_{k\geq 1}$, $\{{\bf u}_k\}_{k\geq 1}$
and $\{\varepsilon_k\}_{k\geq 1}$ are generated by \emph{Algorithm \ref{spin2}}
for solving \eqref{eq:inc.ig} with ${\bf T}$ and ${\bf V}$ given
in \eqref{eq:def.t} and \eqref{eq:def.v}, respectively.		
\end{proposition}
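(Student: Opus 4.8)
The plan is to verify directly that the product-space sequences defined in \eqref{eq:234} satisfy, line by line, the defining relations \eqref{eq:spin3}--\eqref{eq:spin4} of Algorithm \ref{spin2} applied to the operator $\mathbf{T}$ and subspace $\mathbf{V}$ of \eqref{eq:def.t} and \eqref{eq:def.v}, using the explicit projection formulas \eqref{eq:f.p}. I would organize the argument around the four assertions packaged in \eqref{eq:spin.e.hpe02} and treat each componentwise in $\HH^m$.

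First I would establish the invariant $\sum_{i=1}^m y_{i,k}=0$ for all $k\geq 0$ by induction: it holds at $k=0$ by the initialization $y_{1,0}+\dots+y_{m,0}=0$, and the update $y_{i,k}=u_{i,k}-\frac1m\sum_\ell u_{\ell,k}$ from \eqref{eq:sum2} preserves it, since summing over $i$ cancels the two terms. This invariant is exactly what forces $P_{\mathbf{V}}(\mathbf{x}_{k-1})=(x_{k-1},\dots,x_{k-1})$: by \eqref{eq:f.p} the common component of $P_{\mathbf{V}}(\mathbf{x}_{k-1})$ equals $\frac1m\sum_i(x_{k-1}+y_{i,k-1})=x_{k-1}$.

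Next come the four checks. The inclusion $\mathbf{u}_k\in(T_1^{\varepsilon_{1,k}}\times\dots\times T_m^{\varepsilon_{m,k}})(\tilde{\mathbf{x}}_k)$ is the conjunction of the $m$ inclusions $u_{i,k}\in T_i^{\varepsilon_{i,k}}(\tilde x_{i,k})$ from \eqref{eq:sum} together with the definition \eqref{eq:def.prs} of the product map, and the identity $\mathbf{u}_k+\tilde{\mathbf{x}}_k-\mathbf{x}_{k-1}=0$ is just $u_{i,k}=x_{k-1}+y_{i,k-1}-\tilde x_{i,k}$ rearranged, recalling that the $i$-th component of $\mathbf{x}_{k-1}$ is $x_{k-1}+y_{i,k-1}$. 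For the error bound I would invoke the reduction of the previous paragraph: since $P_{\mathbf{V}}(\mathbf{x}_{k-1})=(x_{k-1},\dots,x_{k-1})$, we get $\|\tilde{\mathbf{x}}_k-P_{\mathbf{V}}(\mathbf{x}_{k-1})\|^2=\sum_i\|\tilde x_{i,k}-x_{k-1}\|^2$, so summing the $m$ inequalities $\varepsilon_{i,k}\leq\frac{\sigma^2}2\|\tilde x_{i,k}-x_{k-1}\|^2$ yields $\varepsilon_k=\sum_i\varepsilon_{i,k}\leq\frac{\sigma^2}2\|\tilde{\mathbf{x}}_k-P_{\mathbf{V}}(\mathbf{x}_{k-1})\|^2$. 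Finally, for the update I would read off \eqref{eq:f.p}: the $i$-th component of $P_{\mathbf{V}}(\tilde{\mathbf{x}}_k)+P_{\mathbf{V}^\perp}(\mathbf{u}_k)$ is $\frac1m\sum_\ell\tilde x_{\ell,k}+\bigl(u_{i,k}-\frac1m\sum_\ell u_{\ell,k}\bigr)=x_k+y_{i,k}$ by \eqref{eq:sum2}, which is precisely the $i$-th component of $\mathbf{x}_k$.

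To close, I would identify \eqref{eq:spin.e.hpe02} with the iteration \eqref{eq:spin3}--\eqref{eq:spin4} for $(\mathbf{T},\mathbf{V})$. The one ingredient beyond pure bookkeeping is the elementary containment $T_1^{\varepsilon_{1,k}}\times\dots\times T_m^{\varepsilon_{m,k}}\subseteq\mathbf{T}^{\,\varepsilon_k}$, obtained by adding the $m$ enlargement inequalities from \eqref{eq:def.teps} and using $\varepsilon_k=\sum_i\varepsilon_{i,k}$; combined with $\mathbf{u}_k=\mathbf{x}_{k-1}-\tilde{\mathbf{x}}_k$ it gives $\mathbf{u}_k=\mathbf{x}_{k-1}-\tilde{\mathbf{x}}_k\in\mathbf{T}^{\,\varepsilon_k}(\tilde{\mathbf{x}}_k)$, which is the inclusion-plus-identity of \eqref{eq:spin3}; the error bound is the inequality in \eqref{eq:spin3}, and the last line is \eqref{eq:spin4}. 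I expect the main obstacle to be purely the reduction $P_{\mathbf{V}}(\mathbf{x}_{k-1})=(x_{k-1},\dots,x_{k-1})$, since this is the single place where the conserved-sum invariant enters and where an averaging or sign slip would silently spoil the relative-error inequality; the rest is componentwise substitution into \eqref{eq:f.p}.
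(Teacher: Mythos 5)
Your proof is correct and follows essentially the same route as the paper, which simply asserts that \eqref{eq:spin.e.hpe02} "follows directly" from \eqref{eq:sum}, \eqref{eq:sum2}, \eqref{eq:234} and \eqref{eq:def.prs}; you supply the componentwise verification that the paper omits. The two details you make explicit --- the conserved-sum invariant $\sum_i y_{i,k}=0$ (which is what makes $P_{\mathbf V}(\mathbf x_{k-1})=(x_{k-1},\dots,x_{k-1})$ and hence the relative-error inequality come out right) and the containment $T_1^{\varepsilon_{1,k}}\times\cdots\times T_m^{\varepsilon_{m,k}}\subseteq \mathbf T^{\,\varepsilon_k}$ needed to land in Algorithm \ref{spin2}'s format --- are exactly the right places to be careful, and both are handled correctly.
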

\begin{proof}
Note that \eqref{eq:spin.e.hpe02} follows directly from
\eqref{eq:sum}, \eqref{eq:sum2}, \eqref{eq:234} and definition \eqref{eq:def.prs}
(with $S_i=T^{\varepsilon_{i,k}}$ for $i=1,\dots, m$). The last statement of
the Proposition is a direct consequence of \eqref{eq:spin.e.hpe02}
and Algorithm \ref{spin2}'s definition.
\qed
\end{proof}

\noindent
{\it Proof of Theorem \ref{pr:rate.spin3}}.
We start by defining the ergodic
sequences associated to the sequences $\{\tilde {\bf x}_k\}$,
$\{{\bf u}_k\}$ and $\{\varepsilon_k\}$ in \eqref{eq:234}:
\begin{align}
  \label{eq:def.erg20}
 \begin{aligned}
 &\tilde {\bf x}_k^a:=\dfrac{1}{k}\sum_{\ell=1}^k \tilde {\bf x}_\ell\,,\quad  {\bf u}_k^a:=\dfrac{1}{k}\sum_{\ell=1}^k {\bf u}_\ell\,,\\ 
 &\varepsilon_k^{\,a}:=\dfrac{1}{k}\sum_{\ell=1}^k\,
\big[\varepsilon_\ell+\inner{\tilde {\bf x}_\ell-\tilde {\bf x}_k^a}{{\bf u}_\ell-{\bf u}_k^a}\big].
\end{aligned}
\end{align} 
Note that from \eqref{def:sol.v}, \eqref{def:sol.s}, \eqref{eq:def.t}, \eqref{eq:def.v}
and \eqref{eq:v.perp} we obtain $\mathcal{S}^*({\bf V}, {\bf T})=\mathcal{S}^*(\Sigma)$
and, consequently, $d_{0,{\bf V}}=d_{0,\Sigma}$.
That said, it follows from the last statement in Proposition \ref{pr:sum.e.4}, Proposition 
\ref{th:4.e.3}(a) and Theorem \ref{pr:rate.spin2} that 
for any $k\geq 1$,
there exists $j\in\{1,\dots,k\}$ 
    such that  
		\begin{align}
		  \label{eq:802}
			\begin{aligned}
      & {\bf u}_j\in \left(T_1^{\varepsilon_{1,\,j}}\times T_2^{\varepsilon_{2,\,j}}
			\times \cdots \times T_m^{\varepsilon_{m,\,j}}
			\right)(\tilde {\bf x}_j),\\  
			&\sqrt{\norm{\tilde {\bf x}_j-P_{{\bf V}}(\tilde {\bf x}_j)}^2+\norm{{\bf u}_j-P_{{\bf V}^\perp}({\bf u}_j)}^2}\leq 
			\dfrac{d_{0,\Sigma}}{\sqrt{k}}
			\sqrt{\dfrac{1+\sigma}{1-\sigma}},\quad  
			\varepsilon_j\leq \dfrac{\sigma^2 d_{0,\Sigma}^{\,2}}{2(1-\sigma^2) k}\;,
     \end{aligned}
		\end{align}
		and
	\begin{align}
		  \label{eq:803}
			 \begin{aligned}
      &\sqrt{\norm{\tilde {\bf x}_k^a-P_{{\bf V}}(\tilde {\bf x}_k^a)}^2+\norm{{\bf u}_k^a-P_{{\bf V}^\perp}({\bf u}_k^a)}^2}\leq 
			\dfrac{2d_{0,\Sigma}}{k},\quad
      0\leq \varepsilon_k^{\,a}\leq \dfrac{2(1+\sigma/\sqrt{1-\sigma^2})d_{0,\Sigma}^{\,2}}{k}\,.
      \end{aligned}
		 \end{align}
		In particular, we see that Item (a) of Theorem \ref{pr:rate.spin3}
		follows from \eqref{eq:802}, \eqref{eq:234} and \eqref{eq:f.p}. 
		Note now that from \eqref{eq:def.erg20}, \eqref{eq:234} and \eqref{eq:def.erg4} we obtain, for all $k\geq 1$,
		 \begin{align}
		\label{eq:999}
		 {\bf \tilde x}^a_k=(\tilde x_{1,\,k}^a,\tilde x_{2,\,k}^a,\dots, \tilde x_{m,\,k}^a),\quad 
		 {\bf u}^a_k=(u_{1,\,k}^a,u_{2,\,k}^a,\dots, u_{m,\,k}^a),\quad
		 \varepsilon_k^a=\sum_{i=1}^m\,\varepsilon_{i,\,k}^a.
		\end{align}
		Hence, the inequalities in \eqref{eq:707} follow
		from \eqref{eq:803}, \eqref{eq:999} and \eqref{eq:f.p}.
		To finish the proof of the theorem it suffices to show the inclusions in
		\eqref{eq:707} for each $i=1,\dots, m$ and all $k\geq 1$. To this end,
		note that for each $i=1,\dots, m$ the desired inclusion is a direct consequence of
		the inclusions in \eqref{eq:sum}, the definitions in \eqref{eq:def.erg4}
		and Theorem \ref{th:tf} (with $T=T_i$ for each $i=1,\dots, m$).\qed

\subsection{Proof of Theorem \ref{pr:rate.13}}
\label{subsec:p8}
Next proposition shows that Algorithm \ref{fb} is a special instance of Algorithm \ref{sum} for solving \eqref{eq:sum10}
with $T_i=\nabla (\lambda f_i)+\partial (\lambda \varphi_i)$ for all $i=1,\dots, m$.

\begin{proposition}
 \label{pr:fb.e.sum}
 Let $\{x_k\}_{k\geq 0}$ and, for $i=1,\dots, m$, $\{y_{i,\,k}\}_{k\geq 0}$ and $\{\tilde x_{i,\,k}\}_{k\geq 1}$
 be generated by \emph{Algorithm \ref{fb}}. 
 For $i=1,\dots, m$, consider the sequences $\{u_{i,\,k}\}_{k\geq 1}$
 and $\{\varepsilon_{i,\,k}\}_{k\geq 1}$
 where,  
for all $k\geq 1$,
 \begin{align}
  \label{eq:def.u.fb}
	 \begin{aligned}
	 &u_{i,\,k}:=x_{k-1}+y_{i,\,k-1}-\tilde x_{i,\,k},\\
	 &\varepsilon_{i,\,k}:=\lambda\left[f_i(\tilde x_{i,\,k})-f_i(x_{k-1})-\inner{\nabla f_i(x_{k-1})}
	 {\tilde x_{i,\,k}-x_{k-1}}\right].
	 \end{aligned}
 \end{align} 
Then,  
for all $k\geq 1$,
\begin{align}
\label{eq:sx}
  & \nabla(\lambda f_i)(x_{k-1})\in \partial_{\varepsilon_{i,\,k}} (\lambda f_i)(\tilde x_{i,\,k}),\\
	\label{eq:sx02}
	& u_{i,\,k}-\nabla(\lambda f_i)(x_{k-1})\in \partial(\lambda \varphi_i)(\tilde x_{i,\,k}),\\
	\label{eq:sx03}
	&u_{i,\,k}\in \left(\partial_{\varepsilon_{i,\,k}} (\lambda f_i)
	+\partial (\lambda \varphi_i)\right)(\tilde x_{i,\,k}),\\
	\label{eq:sx04}
  &0\leq \varepsilon_{i,\,k} \leq \frac{\sigma^2}{2} \norm{\tilde x_{i,\,k}-x_{k-1}}^2,\\
	\label{eq:sx05}
	& x_{k}\;\;\mbox{and}\;\;y_{i,\,k}\;\;\mbox{satisfy}\;\; (\ref{eq:sum2}).
\end{align} 
As a consequence of \eqref{eq:def.u.fb}--\eqref{eq:sx05}, the sequences $\{x_{k}\}_{k\geq 0}$, $\{y_{i,\,k}\}_{k\geq 1}$, 
$\{\tilde x_{i,\,k}\}_{k\geq 0}$,
$\{\varepsilon_{i,k}\}_{k\geq 1}$ and $\{u_{i,\,k}\}_{k\geq 1}$ are generated by
\emph{Algorithm \ref{sum}} for solving \eqref{eq:sum10} with
\[
T_i=\nabla (\lambda f_i)+\partial (\lambda \varphi_i)\quad \forall i=1,\dots, m.
\]
 \end{proposition}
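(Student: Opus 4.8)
The plan is to verify the five displayed relations one at a time, using the defining identities of Algorithm \ref{fb} together with the calculus rules collected in Section \ref{sec:bm}, and then to read off from \eqref{eq:sx03}--\eqref{eq:sx05} that the produced sequences obey the recipe of Algorithm \ref{sum}. Throughout I will use that differentiability of $f_i$ forces $\partial(\lambda f_i)(x)=\{\nabla(\lambda f_i)(x)\}=\{\lambda\nabla f_i(x)\}$, and that $\partial(\lambda\varphi_i)=\lambda\partial\varphi_i$ by Lemma \ref{lm:crules}(a).

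First I would prove \eqref{eq:sx}. Since $\nabla(\lambda f_i)(x_{k-1})\in\partial(\lambda f_i)(x_{k-1})$, Lemma \ref{lm:ms} applied to $f=\lambda f_i$, $x=x_{k-1}$ and $\tilde x=\tilde x_{i,k}$ gives $\nabla(\lambda f_i)(x_{k-1})\in\partial_{\varepsilon}(\lambda f_i)(\tilde x_{i,k})$ for every $\varepsilon\ge (\lambda f_i)(\tilde x_{i,k})-(\lambda f_i)(x_{k-1})-\langle\nabla(\lambda f_i)(x_{k-1}),\tilde x_{i,k}-x_{k-1}\rangle$; the right-hand side is exactly $\varepsilon_{i,k}$ by the definition in \eqref{eq:def.u.fb}, which yields \eqref{eq:sx}. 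For \eqref{eq:sx02} I would invoke the resolvent characterization recalled in Subsection \ref{sec:gn}: the update \eqref{eq:sum80} reads $\tilde x_{i,k}=(\lambda\partial\varphi_i+I)^{-1}w$ with $w=x_{k-1}+y_{i,k-1}-\nabla(\lambda f_i)(x_{k-1})$, hence $w-\tilde x_{i,k}\in\lambda\partial\varphi_i(\tilde x_{i,k})=\partial(\lambda\varphi_i)(\tilde x_{i,k})$, and substituting $u_{i,k}=x_{k-1}+y_{i,k-1}-\tilde x_{i,k}$ shows $w-\tilde x_{i,k}=u_{i,k}-\nabla(\lambda f_i)(x_{k-1})$. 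Adding the inclusions \eqref{eq:sx} and \eqref{eq:sx02} gives \eqref{eq:sx03}.

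The bound \eqref{eq:sx04} comes from Lemma \ref{lm:ineq.lin} applied to $f_i$: it confines $f_i(\tilde x_{i,k})-f_i(x_{k-1})-\langle\nabla f_i(x_{k-1}),\tilde x_{i,k}-x_{k-1}\rangle$ to the interval between $0$ and $(L_i/2)\|\tilde x_{i,k}-x_{k-1}\|^2$; multiplying by $\lambda$ and using $\lambda L_i\le\lambda L_\Sigma=\sigma^2$ (by the choice $\lambda=\sigma^2/\max\{L_i\}$) delivers the claim. For \eqref{eq:sx05}, the first identity of \eqref{eq:sum2} is literally the definition of $x_k$ in \eqref{eq:sum82}. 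The second identity is where the only genuine bookkeeping occurs: from $y_{i,k}=y_{i,k-1}+x_k-\tilde x_{i,k}$ and $u_{i,k}=x_{k-1}+y_{i,k-1}-\tilde x_{i,k}$ one gets $y_{i,k}=u_{i,k}-x_{k-1}+x_k$, so it remains to identify $x_{k-1}-x_k$ with $\tfrac1m\sum_\ell u_{\ell,k}$. Averaging the $u_{\ell,k}$ and using $\tfrac1m\sum_\ell\tilde x_{\ell,k}=x_k$ reduces this to the invariant $\sum_\ell y_{\ell,k-1}=0$, which I would establish by induction on $k$: it holds at $k=0$ by hypothesis, and summing the update $y_{i,k}=y_{i,k-1}+x_k-\tilde x_{i,k}$ over $i$ gives $\sum_i y_{i,k}=\sum_i y_{i,k-1}+mx_k-\sum_i\tilde x_{i,k}=\sum_i y_{i,k-1}$.

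Finally, to obtain the concluding statement I would feed \eqref{eq:sx03} through the enlargement calculus: Proposition \ref{pr:teps}(e) upgrades the $\varepsilon_{i,k}$-subgradient of $\lambda f_i$ to an element of $(\nabla(\lambda f_i))^{\varepsilon_{i,k}}(\tilde x_{i,k})$, Proposition \ref{pr:teps}(d) writes $\partial(\lambda\varphi_i)=(\partial(\lambda\varphi_i))^{0}$, and Proposition \ref{pr:teps}(b) then sums these to give $u_{i,k}\in(\nabla(\lambda f_i)+\partial(\lambda\varphi_i))^{\varepsilon_{i,k}}(\tilde x_{i,k})=T_i^{\varepsilon_{i,k}}(\tilde x_{i,k})$. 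Together with \eqref{eq:sx04} this is exactly the subproblem condition \eqref{eq:sum}, while \eqref{eq:sx05} is the update step \eqref{eq:sum2}, so the sequences are generated by Algorithm \ref{sum} with $T_i=\nabla(\lambda f_i)+\partial(\lambda\varphi_i)$. I expect the induction for the invariant $\sum_i y_{i,k}=0$ and the correct $\varepsilon$-versus-$0$ accounting in the last enlargement step to be the only places demanding care; every remaining manipulation is a direct application of the cited lemmas.
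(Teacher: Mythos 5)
Your proposal is correct and follows essentially the same route as the paper's proof: Lemma \ref{lm:ms} for \eqref{eq:sx}, the resolvent characterization plus Lemma \ref{lm:crules}(a) for \eqref{eq:sx02}, Lemma \ref{lm:ineq.lin} with $\lambda L_i\le\sigma^2$ for \eqref{eq:sx04}, the invariant $\sum_i y_{i,k}=0$ for \eqref{eq:sx05}, and Proposition \ref{pr:teps}(b),(e) for the final identification with Algorithm \ref{sum}. Your write-up merely makes explicit (e.g.\ the induction for the invariant and the $\varepsilon$-versus-$0$ bookkeeping) what the paper leaves as "direct use."
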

\begin{proof}
 Inclusion \eqref{eq:sx} follows from Lemma \ref{lm:ms}
with $(f,x,\tilde x,v,\varepsilon)=(\lambda f_i,x_{k-1}, \tilde x_{i,\,k},\nabla (\lambda f_i)(x_{k-1}), \varepsilon_{i,\,k})$,
where $\varepsilon_{i,\,k}$ is given in \eqref{eq:def.u.fb}. Inclusion \eqref{eq:sx02} follows
from \eqref{eq:sum80}, the first identity in  \eqref{eq:def.u.fb} and Lemma \ref{lm:crules}(a). Inclusion \eqref{eq:sx03}
is a direct consequence of \eqref{eq:sx} and \eqref{eq:sx02}. The inequalities in \eqref{eq:sx04} follow
from assumption (A.1), the second identity in \eqref{eq:def.u.fb}, Lemma \ref{lm:ineq.lin} and the definition of $\lambda>0$
in Algorithm \ref{fb}. The fact that $x_k$ satisfies \eqref{eq:sum2} follows from the first identities
in \eqref{eq:sum2} and \eqref{eq:sum82}. Direct use of \eqref{eq:sum82}
and the assumption that $y_{1,0}+\cdots +y_{m,0}=0$ in step 0 of Algorithm \ref{fb} 
gives $\sum_{\ell=1}^m\,y_{\ell,\,k}=0$ for all $k\geq 0$, which, in turn,
combined with the second identity in \eqref{eq:sum82} and the first identity in \eqref{eq:def.u.fb} proves that
$y_{i,\,k}$ satisfies the second identity in \eqref{eq:sum2}. Altogether, we
obtain \eqref{eq:sx05}. The last statement of the proposition follows
from \eqref{eq:def.u.fb}--\eqref{eq:sx05} and Proposition \ref{pr:teps}(b;\,e).
\qed
\end{proof}

\noindent
{\it Proof of Theorem \ref{pr:rate.13}}.
 From the last statement of Proposition \ref{pr:fb.e.sum}, the fact that
$$\left(\sum_{i=1}^m\,\left[\nabla f_i+\partial \varphi_i\right]\right)^{-1}(0)=
\left(\sum_{i=1}^m\,\left[\nabla (\lambda f_i)+\partial (\lambda \varphi_i)\right]\right)^{-1}(0)
$$
and Theorem \ref{pr:rate.spin3} we obtain that \eqref{eq:706} and \eqref{eq:707} hold. As a consequence of the
latter fact, \eqref{eq:sx03}, \eqref{eq:1400}, Lemma \ref{lm:ineq.lin}(b), the fact
that $\lambda=\sigma^2/L_\Sigma$ and some direct calculations 
we obtain \eqref{eq:1206} and the inequalities in \eqref{eq:1207}. To finish the proof, it suffices
to prove the inclusion in \eqref{eq:1207}. To this end, note first that from \eqref{eq:sx}, \eqref{eq:def.erg4}, 
the last identity in \eqref{eq:1400}, Lemma \ref{lm:crules}(b) and Theorem \ref{th:tf}(b) we obtain, for each
$i=1,\dots, m$,
\begin{align}
 \label{eq:1402}
 \dfrac{1}{k}\sum_{s=1}^k\,\nabla f_i(x_{s-1})\in \partial_{\varepsilon''^{\,a}_{i,\,k}} f_i(\tilde x^{\,a}_{i,\,k})
\quad \forall k\geq 1.
\end{align}
On the other hand, it follows from \eqref{eq:sx02}, Lemma \ref{lm:crules}(a), \eqref{eq:1400}, Theorem \ref{th:tf}(b) and some direct calculations that, for each
$i=1,\dots, m$,
\begin{align}
 \label{eq:1403}
 u'^{\,a}_{i,\,k}-\dfrac{1}{k}\sum_{s=1}^k\,\nabla f_i(x_{s-1})\in 
\partial_{\left(\varepsilon'^{\,a}_{i,\,k}-\varepsilon''^{\,a}_{i,\,k}\right)}\varphi_i (\tilde x^{\,a}_{i,\,k})
\quad \forall k\geq 1,
\end{align}
which, in turn, combined with \eqref{eq:1402} gives the inclusion in \eqref{eq:1207}.
\qed

\section{Conclusions}
\label{sec:cr}
We proposed and analyzed the iteration-complexity of an inexact version
of the Spingar's partial inverse method and, as a consequence, we obtained
the iteration-complexity of an inexact version of the Spingarn's operator
splitting method as well as of a parallel forward-backward method
for multi-term composite convex optimization. We proved that our
method falls in the framework of the hybrid proximal extragradient (HPE) method, 
for which the iteration-complexity has been obtained recently by Monteiro and Svaiter.
We also introduced a notion of approximate solution for the Spingarn's problem
(which generalizes the one introduced by Monteiro and Svaiter for monotone inclusions)
and proved the iteration-complexity for the above mentioned methods based on this notion of
approximate solution.

\section*{Acknowledgments}
The work of S. C. L. was partially supported by CAPES Scholarship no.
201302186. The work of the second author was partially supported by CNPq grants no.
406250/2013-8, 237068/2013-3 and 306317/2014-1.




%
%

\end{document}